\def\Kol{\mathsf{K}}
\def\NON{\mathsf{NON}}
\def\ERT{\mathsf{ERT}}
\def\ECT{\mathsf{ECT}}
\def\I{\mathsf{I}}
\def\DIS{\mathsf{DIS}}
\def\r{\mathrm{r}}
\newcommand{\douwidehat}[2]{%
  \sbox0{$\m@th#1\widehat{\hphantom{#2}}$}%
  \sbox2{$\m@th#1x$}
  \sbox4{$\m@th#1#2$}
  \dimen0=\ht0
  \advance\dimen0 -.8\ht2
  \dimen2=\dp4
  \rlap{%
    \raisebox{\dimexpr\dimen0-\dimen2}{%
      \scalebox{1}[-1]{\box0}%
    }%
  }%
  {#2}%
}
\CatchFileEdef\user{"|kpsewhich -var-value USERNAME"}{\endlinechar=-1 }
\title{On the Complexity of\\ Computing G\"odel Numbers}
\author{Vasco Brattka\inst{1,2}\thanks{Vasco Brattka is supported by the National Research Foundation of South Africa.}}
             \institute{ Faculty of Computer Science, Universit\"at der Bundeswehr M\"unchen, Germany
             \and Dept.\ of Mathematics \& App.\ Maths., University of Cape Town, South Africa
           \email{Vasco.Brattka@cca-net.de}}
\begin{document} 

\maketitle



\def\AA{{\mathcal A}}
\def\BB{{\mathcal B}}
\def\CC{{\mathcal C}}
\def\DD{{\mathcal D}}
\def\EE{{\mathcal E}}
\def\FF{{\mathcal F}}
\def\GG{{\mathcal G}}
\def\HH{{\mathcal H}}
\def\II{{\mathcal I}}
\def\JJ{{\mathcal J}}
\def\KK{{\mathcal K}}
\def\LL{{\mathcal L}}
\def\MM{{\mathcal M}}
\def\NN{{\mathcal N}}
\def\OO{{\mathcal O}}
\def\PP{{\mathcal P}}
\def\QQ{{\mathcal Q}}
\def\RR{{\mathcal R}}
\def\SS{{\mathcal S}}
\def\TT{{\mathcal T}}
\def\UU{{\mathcal U}}
\def\VV{{\mathcal V}}
\def\WW{{\mathcal W}}
\def\XX{{\mathcal X}}
\def\YY{{\mathcal Y}}
\def\ZZ{{\mathcal Z}}


\def\bA{{\mathbf A}}
\def\bB{{\mathbf B}}
\def\bC{{\mathbf C}}
\def\bD{{\mathbf D}}
\def\bE{{\mathbf E}}
\def\bF{{\mathbf F}}
\def\bG{{\mathbf G}}
\def\bH{{\mathbf H}}
\def\bI{{\mathbf I}}
\def\bJ{{\mathbf J}}
\def\bK{{\mathbf K}}
\def\bL{{\mathbf L}}
\def\bM{{\mathbf M}}
\def\bN{{\mathbf N}}
\def\bO{{\mathbf O}}
\def\bP{{\mathbf P}}
\def\bQ{{\mathbf Q}}
\def\bR{{\mathbf R}}
\def\bS{{\mathbf S}}
\def\bT{{\mathbf T}}
\def\bU{{\mathbf U}}
\def\bV{{\mathbf V}}
\def\bW{{\mathbf W}}
\def\bX{{\mathbf X}}
\def\bY{{\mathbf Y}}
\def\bZ{{\mathbf Z}}


\def\IB{{\Bbb{B}}}
\def\IC{{\Bbb{C}}}
\def\IF{{\Bbb{F}}}
\def\IN{{\Bbb{N}}}
\def\IP{{\Bbb{P}}}
\def\IQ{{\Bbb{Q}}}
\def\IR{{\Bbb{R}}}
\def\IS{{\Bbb{S}}}
\def\IT{{\Bbb{T}}}
\def\IZ{{\Bbb{Z}}}

\def\IIB{{\Bbb{\mathbf B}}}
\def\IIC{{\Bbb{\mathbf C}}}
\def\IIN{{\Bbb{\mathbf N}}}
\def\IIQ{{\Bbb{\mathbf Q}}}
\def\IIR{{\Bbb{\mathbf R}}}
\def\IIZ{{\Bbb{\mathbf Z}}}


\def\ELSE{\quad\mbox{else}\quad}
\def\WITH{\quad\mbox{with}\quad}
\def\FOR{\quad\mbox{for}\quad}
\def\AND{\;\mbox{and}\;}
\def\OR{\;\mbox{or}\;}

\def\To{\longrightarrow}
\def\TO{\Longrightarrow}
\def\In{\subseteq}
\def\sm{\setminus}
\def\Inneq{\In_{\!\!\!\!/}}
\def\dmin{\mathop{\dot{-}}}
\def\splus{\oplus}
\def\SEQ{\triangle}
\def\DIV{\uparrow}
\def\INV{\leftrightarrow}
\def\SET{\Diamond}

\def\kto{\equiv\!\equiv\!>}
\def\kin{\subset\!\subset}
\def\pto{\leadsto}
\def\into{\hookrightarrow}
\def\onto{\to\!\!\!\!\!\to}
\def\prefix{\sqsubseteq}
\def\rel{\leftrightarrow}
\def\mto{\rightrightarrows}

\def\B{{\mathsf{{B}}}}
\def\D{{\mathsf{{D}}}}
\def\G{{\mathsf{{G}}}}
\def\E{{\mathsf{{E}}}}
\def\J{{\mathsf{{J}}}}
\def\K{{\mathsf{{K}}}}
\def\L{{\mathsf{{L}}}}
\def\R{{\mathsf{{R}}}}
\def\T{{\mathsf{{T}}}}
\def\U{{\mathsf{{U}}}}
\def\W{{\mathsf{{W}}}}
\def\Z{{\mathsf{{Z}}}}
\def\w{{\mathsf{{w}}}}
\def\HP{{\mathsf{{H}}}}
\def\C{{\mathsf{{C}}}}
\def\Tot{{\mathsf{{Tot}}}}
\def\Fin{{\mathsf{{Fin}}}}
\def\Cof{{\mathsf{{Cof}}}}
\def\Cor{{\mathsf{{Cor}}}}
\def\Equ{{\mathsf{{Equ}}}}
\def\Com{{\mathsf{{Com}}}}
\def\Inf{{\mathsf{{Inf}}}}

\def\Tr{{\mathrm{Tr}}}
\def\Sierp{{\mathrm Sierpi{\'n}ski}}
\def\psisierp{{\psi^{\mbox{\scriptsize\Sierp}}}}
\def\cl{{\mathrm{{cl}}}}
\def\Haus{{\mathrm{{H}}}}
\def\Ls{{\mathrm{{Ls}}}}
\def\Li{{\mathrm{{Li}}}}

\def\CL{\mathsf{CL}}
\def\ACC{\mathsf{ACC}}
\def\DNC{\mathsf{DNC}}
\def\ATR{\mathsf{ATR}}
\def\LPO{\mathsf{LPO}}
\def\LLPO{\mathsf{LLPO}}
\def\WKL{\mathsf{WKL}}
\def\RCA{\mathsf{RCA}}
\def\ACA{\mathsf{ACA}}
\def\SEP{\mathsf{SEP}}
\def\BCT{\mathsf{BCT}}
\def\IVT{\mathsf{IVT}}
\def\IMT{\mathsf{IMT}}
\def\OMT{\mathsf{OMT}}
\def\CGT{\mathsf{CGT}}
\def\UBT{\mathsf{UBT}}
\def\BWT{\mathsf{BWT}}
\def\HBT{\mathsf{HBT}}
\def\BFT{\mathsf{BFT}}
\def\FPT{\mathsf{FPT}}
\def\WAT{\mathsf{WAT}}
\def\LIN{\mathsf{LIN}}
\def\B{\mathsf{B}}
\def\BF{\mathsf{B_\mathsf{F}}}
\def\BI{\mathsf{B_\mathsf{I}}}
\def\C{\mathsf{C}}
\def\CF{\mathsf{C_\mathsf{F}}}
\def\CN{\mathsf{C_{\IN}}}
\def\CI{\mathsf{C_\mathsf{I}}}
\def\CK{\mathsf{C_\mathsf{K}}}
\def\CA{\mathsf{C_\mathsf{A}}}
\def\WPO{\mathsf{WPO}}
\def\WLPO{\mathsf{WLPO}}
\def\MP{\mathsf{MP}}
\def\BD{\mathsf{BD}}
\def\Fix{\mathsf{Fix}}
\def\Mod{\mathsf{Mod}}

\def\s{\mathrm{s}}
\def\r{\mathrm{r}}
\def\w{\mathsf{w}}

\def\leqm{\mathop{\leq_{\mathrm{m}}}}
\def\equivm{\mathop{\equiv_{\mathrm{m}}}}
\def\leqT{\mathop{\leq_{\mathrm{T}}}}
\def\lT{\mathop{<_{\mathrm{T}}}}
\def\nleqT{\mathop{\not\leq_{\mathrm{T}}}}
\def\equivT{\mathop{\equiv_{\mathrm{T}}}}
\def\nequivT{\mathop{\not\equiv_{\mathrm{T}}}}
\def\leqwtt{\mathop{\leq_{\mathrm{wtt}}}}
\def\equiPT{\mathop{\equiv_{\P\mathrm{T}}}}
\def\leqW{\mathop{\leq_{\mathrm{W}}}}
\def\equivW{\mathop{\equiv_{\mathrm{W}}}}
\def\leqtW{\mathop{\leq_{\mathrm{tW}}}}
\def\leqSW{\mathop{\leq_{\mathrm{sW}}}}
\def\equivSW{\mathop{\equiv_{\mathrm{sW}}}}
\def\leqPW{\mathop{\leq_{\widehat{\mathrm{W}}}}}
\def\equivPW{\mathop{\equiv_{\widehat{\mathrm{W}}}}}
\def\leqFPW{\mathop{\leq_{\mathrm{W}^*}}}
\def\equivFPW{\mathop{\equiv_{\mathrm{W}^*}}}
\def\leqWW{\mathop{\leq_{\overline{\mathrm{W}}}}}
\def\nleqW{\mathop{\not\leq_{\mathrm{W}}}}
\def\nleqSW{\mathop{\not\leq_{\mathrm{sW}}}}
\def\lW{\mathop{<_{\mathrm{W}}}}
\def\lSW{\mathop{<_{\mathrm{sW}}}}
\def\nW{\mathop{|_{\mathrm{W}}}}
\def\nSW{\mathop{|_{\mathrm{sW}}}}
\def\leqt{\mathop{\leq_{\mathrm{t}}}}
\def\equivt{\mathop{\equiv_{\mathrm{t}}}}
\def\leqtop{\mathop{\leq_\mathrm{t}}}
\def\equivtop{\mathop{\equiv_\mathrm{t}}}

\def\bigtimes{\mathop{\mathsf{X}}}

\def\leqm{\mathop{\leq_{\mathrm{m}}}}
\def\equivm{\mathop{\equiv_{\mathrm{m}}}}
\def\leqT{\mathop{\leq_{\mathrm{T}}}}
\def\leqM{\mathop{\leq_{\mathrm{M}}}}
\def\equivT{\mathop{\equiv_{\mathrm{T}}}}
\def\equiPT{\mathop{\equiv_{\P\mathrm{T}}}}
\def\leqW{\mathop{\leq_{\mathrm{W}}}}
\def\equivW{\mathop{\equiv_{\mathrm{W}}}}
\def\nequivW{\mathop{\not\equiv_{\mathrm{W}}}}
\def\leqSW{\mathop{\leq_{\mathrm{sW}}}}
\def\equivSW{\mathop{\equiv_{\mathrm{sW}}}}
\def\leqPW{\mathop{\leq_{\widehat{\mathrm{W}}}}}
\def\equivPW{\mathop{\equiv_{\widehat{\mathrm{W}}}}}
\def\nleqW{\mathop{\not\leq_{\mathrm{W}}}}
\def\nleqSW{\mathop{\not\leq_{\mathrm{sW}}}}
\def\lW{\mathop{<_{\mathrm{W}}}}
\def\lSW{\mathop{<_{\mathrm{sW}}}}
\def\nW{\mathop{|_{\mathrm{W}}}}
\def\nSW{\mathop{|_{\mathrm{sW}}}}

\def\botW{\mathbf{0}}
\def\midW{\mathbf{1}}
\def\topW{\mathbf{\infty}}

\def\pol{{\leq_{\mathrm{pol}}}}
\def\rem{{\mathop{\mathrm{rm}}}}

\def\cc{{\mathrm{c}}}
\def\d{{\,\mathrm{d}}}
\def\e{{\mathrm{e}}}
\def\ii{{\mathrm{i}}}

\def\Cf{C\!f}
\def\id{{\mathrm{id}}}
\def\pr{{\mathrm{pr}}}
\def\inj{{\mathrm{inj}}}
\def\cf{{\mathrm{cf}}}
\def\dom{{\mathrm{dom}}}
\def\range{{\mathrm{range}}}
\def\graph{{\mathrm{graph}}}
\def\Graph{{\mathrm{Graph}}}
\def\epi{{\mathrm{epi}}}
\def\hypo{{\mathrm{hypo}}}
\def\Lim{{\mathrm{Lim}}}
\def\diam{{\mathrm{diam}}}
\def\dist{{\mathrm{dist}}}
\def\supp{{\mathrm{supp}}}
\def\union{{\mathrm{union}}}
\def\fiber{{\mathrm{fiber}}}
\def\ev{{\mathrm{ev}}}
\def\mod{{\mathrm{mod}}}
\def\sat{{\mathrm{sat}}}
\def\seq{{\mathrm{seq}}}
\def\lev{{\mathrm{lev}}}
\def\mind{{\mathrm{mind}}}
\def\arccot{{\mathrm{arccot}}}

\def\Add{{\mathrm{Add}}}
\def\Mul{{\mathrm{Mul}}}
\def\SMul{{\mathrm{SMul}}}
\def\Neg{{\mathrm{Neg}}}
\def\Inv{{\mathrm{Inv}}}
\def\Ord{{\mathrm{Ord}}}
\def\Sqrt{{\mathrm{Sqrt}}}
\def\Re{{\mathrm{Re}}}
\def\Im{{\mathrm{Im}}}
\def\Sup{{\mathrm{Sup}}}

\def\LSC{{\mathcal LSC}}
\def\USC{{\mathcal USC}}

\def\CE{{\mathcal{E}}}
\def\Pref{{\mathrm{Pref}}}

\def\Baire{\IN^\IN}

\def\TRUE{{\mathrm{TRUE}}}
\def\FALSE{{\mathrm{FALSE}}}

\def\co{{\mathrm{co}}}

\def\BBB{{\tt B}}

\newcommand{\SO}[1]{{{\mathbf\Sigma}^0_{#1}}}
\newcommand{\SI}[1]{{{\mathbf\Sigma}^1_{#1}}}
\newcommand{\PO}[1]{{{\mathbf\Pi}^0_{#1}}}
\newcommand{\PI}[1]{{{\mathbf\Pi}^1_{#1}}}
\newcommand{\DO}[1]{{{\mathbf\Delta}^0_{#1}}}
\newcommand{\DI}[1]{{{\mathbf\Delta}^1_{#1}}}
\newcommand{\sO}[1]{{\Sigma^0_{#1}}}
\newcommand{\sI}[1]{{\Sigma^1_{#1}}}
\newcommand{\pO}[1]{{\Pi^0_{#1}}}
\newcommand{\pI}[1]{{\Pi^1_{#1}}}
\newcommand{\dO}[1]{{{\Delta}^0_{#1}}}
\newcommand{\dI}[1]{{{\Delta}^1_{#1}}}
\newcommand{\sP}[1]{{\Sigma^\P_{#1}}}
\newcommand{\pP}[1]{{\Pi^\P_{#1}}}
\newcommand{\dP}[1]{{{\Delta}^\P_{#1}}}
\newcommand{\sE}[1]{{\Sigma^{-1}_{#1}}}
\newcommand{\pE}[1]{{\Pi^{-1}_{#1}}}
\newcommand{\dE}[1]{{\Delta^{-1}_{#1}}}

\newcommand{\dBar}[1]{{\overline{\overline{#1}}}}

\def\QED{$\hspace*{\fill}\Box$}
\def\rand#1{\marginpar{\rule[-#1 mm]{1mm}{#1mm}}}

\def\BL{\BB}


\newcommand{\bra}[1]{\langle#1|}
\newcommand{\ket}[1]{|#1\rangle}
\newcommand{\braket}[2]{\langle#1|#2\rangle}

\newcommand{\ind}[1]{{\em #1}\index{#1}}
\newcommand{\mathbox}[1]{\[\fbox{\rule[-4mm]{0cm}{1cm}$\quad#1$\quad}\]}


\newenvironment{eqcase}{\left\{\begin{array}{lcl}}{\end{array}\right.}

\newtheorem{fact}[theorem]{Fact}
\newtheorem{slogan}[theorem]{Slogan}
\newtheorem{thesis}[theorem]{Thesis}


\begin{abstract}
Given a computable sequence of natural numbers, it is a natural task to find a G\"odel number of a program
that generates this sequence. It is easy to see that this problem is neither continuous nor computable.
In algorithmic learning theory this problem is well studied from several perspectives and one question
studied there is for which sequences this problem is at least learnable in the limit.
Here we study the problem on all computable sequences and we classify the Weihrauch 
complexity of it. For this purpose we can, among other methods, utilize the amalgamation technique known from learning theory.
As a benchmark for the classification we use closed and compact choice problems and their jumps on natural numbers,
and we argue that these problems correspond to induction and boundedness principles, as they are known from the Kirby-Paris hierarchy
in reverse mathematics.
We provide a topological as well as a computability-theoretic classification, which reveal some significant differences. 
\end{abstract}


\section{Introduction}

Given a sequence of natural numbers such as
\[1,2,3,...\]
it is a well-known game to guess how the sequence continuous.
While the first guess could be that it is the sequence of all positive natural numbers,
one has to reconsider this guess when the continuation
\[1,2,3,5,7,...\]
appears. A new guess might be that it is the sequence of all odd numbers together with $2$,
but when we see
\[1,2,3,5,7,11,13,17,19,23,...,\]
then it rather looks like the sequence of prime numbers together with $1$.
Even if the given sequence is computable, it is easy to see that that there is
neither a continuous nor a computable way to make the guess converge in general.
Questions like this have been extensively studied in algorithmic learning theory~\cite{OSW90,ZZ08}.
More recently, AI approaches to determine programs for given sequences have been tested~\cite{GOU23}.
In algorithmic learning theory
it is known and easy to see, for instance, that if one restricts oneself to primitive-recursive
sequences, then there is an algorithm that makes the guess converge eventually. 
The crucial problem here is whether it is recognizable that a guess (in form of a G\"odel number
generating the sequence) represents a total sequence, which is always the case 
if one restricts everything a priorily to G\"odel codes of primitive-recursive sequences.
However for general G\"odel numbers totality is a $\mathrm{\Pi^0_2}$--question in the arithmetical hierarchy 
and hence an oracle
such as $\emptyset''$ is required.
In fact Gold proved the following result~\cite{Gol67}.

\begin{theorem}[Gold 1967]
The total computable sequences are not learnable in the limit.
\end{theorem}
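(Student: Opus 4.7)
The plan is to argue by contradiction. Assume there is a computable learner $L \colon \IN^* \to \IN$ such that, for every total computable $f \in \Baire$, the sequence of conjectures $L(f[0..n])$ eventually stabilizes at some index $e$ with $\varphi_e = f$. I will exhibit a total computable $g$ on which this must fail.

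The key tool is a strong form of the Blum--Blum locking sequence lemma: for every total computable $f$ identified by $L$, there exists a finite prefix $\sigma \prec f$ such that $L(\sigma)$ is a correct index for $f$ and, moreover, $L(\sigma\tau) = L(\sigma)$ for \emph{every} $\tau \in \IN^*$, not merely for $\tau$ keeping the string on $f$. The point is that, since the hypothesis forces $L$ to identify every total computable sequence, any would-be disagreement extension $\tau$ can be padded with zeros into a total computable sequence that $L$ must also learn; this lets one splice candidate disagreements together into a single computable sequence on which $L$ would have to flip its conjecture infinitely often, contradicting the hypothesis.

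Granted the lemma, the theorem follows by a short diagonal argument. Apply the lemma to the constant zero function $f \equiv 0$ to obtain a prefix $\sigma = 0^m$ such that $L(\sigma)$ is an index of the zero function and $L(\sigma\tau) = L(\sigma)$ for every $\tau$. Now set $g := 0^m \cdot 1 \cdot 0^\omega$, which is total and computable. Every prefix of $g$ of length exceeding $m$ has the form $\sigma\tau$, so $L$ outputs the constant value $L(\sigma)$ on all such prefixes. Hence $L$ stabilizes on $g$ to the index $L(\sigma)$, which computes the zero function rather than $g$, so $L$ fails to identify $g$, contradicting the hypothesis.

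The main obstacle is the proof of the strong form of the locking sequence lemma: the naive notion of locking sequence, which only guarantees $L(\sigma\tau) = L(\sigma)$ when $\sigma\tau \prec f$, does not suffice for the diagonal above, since we need to apply it to the off-path extension $\sigma \cdot 1$. Establishing the strong version requires a careful dovetailed construction: one searches semi-decidably for a disagreement extension $\tau$ while simultaneously building a total computable sequence that incorporates it via zero-padding, and the bookkeeping must ensure that the limit object is a well-defined total computable sequence on which $L$ is forced to flip infinitely often.
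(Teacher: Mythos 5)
The paper itself does not prove this theorem (it is quoted from Gold 1967), so the comparison is with the classical argument: one shows that from \emph{every} finite string $\sigma$ the learner can be forced to change its mind on some finite extension, and then splices such extensions into a single computable sequence on which the learner diverges. Your write-up correctly identifies that the ordinary Blum--Blum locking sequence lemma is not enough, and your final diagonal (step 2) is fine \emph{given} your ``strong'' locking lemma; but the proof of that lemma is the entire content of the theorem, and the sketch you give has a genuine gap. Arguing by contradiction from the failure of the strong locking property \emph{for the fixed $f$} only supplies disagreement extensions from prefixes of $f$ (and even then only from those prefixes whose conjecture is a correct index for $f$). After the first splice your constructed sequence leaves $f$ for good, and the negated lemma says nothing about strings that are not prefixes of $f$; knowing that $L$ must also learn the zero-padded sequence $\sigma\tau 0^\omega$ does not produce a further mind change --- it is perfectly consistent that $L$ locks onto a correct index for $\sigma\tau 0^\omega$ right there. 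So the splicing cannot be iterated, and no sequence with infinitely many conjecture changes is obtained.

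The missing idea is Gold's: for \emph{any} finite string $\sigma$, the two sequences $\sigma 0^\omega$ and $\sigma 1 0^\omega$ are distinct total computable sequences, and a learner of all total computable sequences must converge to correct (hence different) indices on both; therefore some finite extension of the form $\sigma 0^j$ or $\sigma 1 0^j$ already forces $L(\sigma\tau)\neq L(\sigma)$. This gives both existence of a mind-change extension from every string and termination of the (computable) search for one, so the spliced diagonal sequence is total computable and $L$ changes its conjecture on it infinitely often --- a direct contradiction. Note that once you have this observation, the ``strong locking lemma'' detour is superfluous (indeed the observation shows no strong locking sequence can exist, so under your reductio hypothesis the lemma is only available vacuously, and as a standalone statement about learners of smaller classes it is simply false --- the canonical learner of the eventually-zero sequences has no strong locking sequences). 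The repair is to replace your lemma by the every-string mind-change claim, proved via the two-extension argument, and run the splicing construction directly.
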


That means that there is no general algorithm that could produce a sequence of G\"odel numbers
that converges to a correct G\"odel number from a total computable sequence that is given as input.

We formalize this problem as follows
The {\em G\"odel problem} $\G:\In\IN^\IN\mto\IN$ that we want to study can be defined
by 
\[\G:\In\IN^\IN\mto\IN,p\mapsto\{i\in\IN:\varphi_i=p\},\] 
where
the domain $\dom(\G)$ consists of all computable sequences and 
$\varphi:\IN\to\PP$ denotes some standard G\"odel numbering of the set of computable sequences $\PP$.
Briefly we could also define $\G$ as $\varphi^{-1}$ restricted to total computable sequences.
If we want to consider $\G$ as a theorem whose complexity we aim to classify, then
the theorem would be the statement that every total computable function has a G\"odel number, i.e.,
\[(\forall\mbox{ computable }p\in\IN^\IN)(\exists i\in\IN)\;\varphi_i=p.\]
That is, this theorem states that $\varphi$ is surjective and hence a numbering of $\PP$.
This theorem seems to be very simple and does not appear to need particularly powerful 
resources. However, we will see that it shows some peculiar properties.

Another perspective one could take is to ask what additional useful information is carried by a program $i$ that the sequence $p$
itself does not make accessible? This question has been studied by Hoyrup and Rojas~\cite{HR17}
and their answer could be summarized briefly as follows:

\begin{slogan}[Hoyrup and Rojas 2017]
\label{slogan:Hoyrup-Rojas}
{\em
The only useful additional information carried by a program compared to the natural number
sequence it represents, is an upper bound on the Kolmogorov complexity of the sequence.}
\end{slogan}

One of our goals is to study in which sense this slogan can be converted into theorems
on the Weihrauch complexity of the G\"odel problem. For this purpose we introduce a
number of further related problems.
For our study the {\em Kolmogorv complexity} is the function
$\Kol:\In\IN^\IN\mto\IN$ with $\dom(\Kol)=\dom(\G)$ defined by
\[\Kol:\In\IN^\IN\to\IN,p\mapsto\min\G(p)\]
for all $p\in\dom(\Kol)$. The study by Hoyrup and Rojas also motivates to investigate
the following variant of $\G$ 
\[\G_\geq:\In\IN^\IN\times\IN\mto\IN,(p,m)\mapsto\G(p)\]
with $\dom(\G_\geq):=\{(p,m)\in\IN^\IN\times\IN:m\geq \Kol(p)\}$. That is
$\G_\geq$ is the G\"odel problem that gets an upper bound on the Kolmogorov complexity 
as additional input information.
We note that the output does not need to satisfy the input bound $m$ according to this definition.
Yet another function that one can consider is
\[\Kol_\geq:\IN^\IN\mto\IN,p\mapsto\{m\in\IN:m\geq \Kol(p)\}\]
that just yields an upper bound on the Kolmogorov complexity of the input sequence,
again with $\dom(\Kol_\geq):=\dom(\G)$.

In terms of their Weihrauch complexity these problems are in the following obvious relation
that is also visualized in the diagram in Figure~\ref{fig:basic-problems}.

\begin{lemma}
\label{lem:basic-problems}
$\G_\geq\sqcup\Kol_\geq\leqSW\G\leqSW\Kol$ and $\G=\G_\geq\circ(\id,\K_\geq)$.
\end{lemma}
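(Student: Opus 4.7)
My plan is to verify each reduction and identity in turn, using only the definitions of $\G$, $\Kol$, $\G_\geq$, $\Kol_\geq$, and strong Weihrauch reducibility.

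The easiest piece is $\G \leqSW \Kol$: use the identity on input and the identity on output. This works because $\Kol(p) = \min\G(p) \in \G(p)$ by definition, so every realizer of $\Kol$ is a realizer of $\G$.

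For $\G_\geq\sqcup\Kol_\geq\leqSW\G$, I want a single pair of computable maps $(K,H)$ that witnesses the reduction on both summands. The forward map $K$ acts on a tagged input: on $(0,(p,m))$ from the $\G_\geq$-side it outputs $p$; on $(1,p)$ from the $\Kol_\geq$-side it also outputs $p$. The backward map $H$ is the identity. To check correctness I need to see that any $i \in \G(p)$ is admissible as an answer in each case. For $\G_\geq$ this is immediate from $\G_\geq(p,m)=\G(p)$. For $\Kol_\geq$ I use that any $i$ with $\varphi_i=p$ satisfies $i\geq\min\{j:\varphi_j=p\}=\Kol(p)$, so $i\in\Kol_\geq(p)$.

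Finally, $\G=\G_\geq\circ(\id,\K_\geq)$ is essentially unpacking definitions. The composition $(\id,\K_\geq)$ sends a computable $p$ to all pairs $(p,m)$ with $m\geq\Kol(p)$, which are exactly the elements of $\dom(\G_\geq)$ in the fiber above $p$. Applying $\G_\geq$ then yields $\G(p)$ independently of the chosen $m$, so composition gives back $\G$. The only subtlety to note is matching domains: $\dom(\G)=\dom(\Kol_\geq)$, so $(\id,\K_\geq)$ is defined exactly on $\dom(\G)$, and its image lies in $\dom(\G_\geq)$, making the composition well-defined.

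I do not expect any real obstacle here; the statement is essentially bookkeeping. The one point worth writing carefully is that the reduction witnessing $\G_\geq\sqcup\Kol_\geq\leqSW\G$ is uniform across the two summands, because strong Weihrauch reducibility forbids using the input of the reduction in the backward map.
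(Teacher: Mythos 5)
The pieces $\G\leqSW\Kol$, the two separate reductions $\G_\geq\leqSW\G$ and $\Kol_\geq\leqSW\G$, and the identity $\G=\G_\geq\circ(\id,\Kol_\geq)$ are all correct and are exactly the bookkeeping the paper itself leaves implicit. The genuine gap is in the coproduct claim $\G_\geq\sqcup\Kol_\geq\leqSW\G$, and it is precisely the point you flag and then do not handle. The codomain of $\G_\geq\sqcup\Kol_\geq$ is $\IN\sqcup\IN$, so a correct answer is a \emph{tagged} pair: $(0,i)$ with $i\in\G_\geq(p,m)$ if the input came from the left summand, $(1,m')$ with $m'\in\Kol_\geq(p)$ if it came from the right. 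With your forward map $K$, which discards the tag and passes $p$ in both cases, the inputs $(0,\langle p,m\rangle)$ and $(1,p)$ lead to the same query to $\G$ and hence to the same oracle answer; the backward map of a \emph{strong} reduction sees only that answer, so it would have to produce differently tagged outputs from identical data. Hence no backward map, and in particular not the identity (which produces no tag at all), can complete your reduction. This is exactly where $\leqSW$ differs from $\leqW$: since $\sqcup$ is the join for ordinary Weihrauch reducibility, the two separate reductions would settle the $\leqW$-statement, but they do not automatically yield the $\leqSW$-statement, which is the only nontrivial content of that part of the lemma.

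The repair is to smuggle the tag through the oracle answer. Let $K$ map $(0,\langle p,m\rangle)$ to the sequence $0\,p(0)\,p(1)\ldots$ and $(1,p)$ to $1\,p(0)\,p(1)\ldots$. Given an index $i$ with $\varphi_i$ equal to this tagged sequence, the backward map $H$ simulates $\varphi_i(0)$ (this search terminates because $\varphi_i$ is total) to recover the tag $t\in\{0,1\}$, and uses the smn-theorem to compute an index $j$ with $\varphi_j(n)=\varphi_i(n+1)$ for all $n$, so that $\varphi_j=p$. Then $(0,j)$ is a valid answer on the $\G_\geq$-side, and $(1,j)$ is a valid answer on the $\Kol_\geq$-side because $j\geq\min\{k:\varphi_k=p\}=\Kol(p)$. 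With this modification your argument goes through; the remaining parts of your write-up need no change.
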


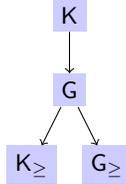
\begin{figure}[htb]
\begin{center}
\begin{tikzpicture}[scale=1,auto=left]
\node[style={fill=blue!20}]  (GK) at (-3.5,3) {$\G_\geq$};
\node[style={fill=blue!20}]  (KG) at (-4.5,3) {$\Kol_\geq$};
\node[style={fill=blue!20}]  (G) at (-4,4) {$\G$};
\node[style={fill=blue!20}]  (K) at (-4,5) {$\Kol$};
\draw[->] (K) edge (G);
\draw[->] (G) edge (KG);
\draw[->] (G) edge (GK);
\end{tikzpicture}
\caption{The G\"odel problem and its relatives.}
\label{fig:basic-problems}
\end{center}
\end{figure}

Obviously, the Kolmogorov complexity $\Kol$ computes $\G$ and $\G$ computes $\G_\geq$ as well as $\K_\geq$.
Here $\sqcup$ stands for the supremum with respect to Weihrauch reducibility~\cite{BGP21}.
A precise definition of Weihrauch reducibility follows in Section~\ref{sec:weihrauch}.

Our goal is to classify the Weihrauch complexity and also its topological counterpart
of the problems mentioned above. 
The way we will calibrate this complexity is with the help of the problems
\[\K_\IN\lW\C_\IN\lW\K_\IN'\lW\C_\IN'\lW\K_\IN''\lW\C_\IN''\lW...\]
The so-called {\em compact choice problem} $\K_\IN$ and the so-called {\em closed choice problem}
$\C_\IN$ on the natural numbers play an important role in Weihrauch complexity~\cite{BGP21}.
As noted by the author and Rakotoniaina~\cite[\S7]{BR17} they can be seen as Weihrauch complexity analogs of the Kirby-Paris hierarchy 
\[\B\mathrm{\Sigma}^0_1\leftarrow\I\mathrm{\Sigma}^0_1\leftarrow\B\mathrm{\Sigma}^0_2\leftarrow\I\mathrm{\Sigma}^0_2\leftarrow\B\mathrm{\Sigma}^0_3\leftarrow\I\mathrm{\Sigma}^0_3\leftarrow...\]
of boundedness and induction principles 
as it is known from reverse mathematics~\cite{Sim09,Hir15,DM22,HP93}
and we will simply refer to this hierarchy as the (Weihrauch version of the) {\em Kirby-Paris hierarchy} in the following.
They are introduced in Section~\ref{sec:choice}.
Using the results presented there we will argue in the Conclusions (see Section~\ref{sec:conclusions})
that they yield natural counterparts of the induction and boundedness problems
known from reverse mathematics~\cite{Sim09}.
In Section~\ref{sec:topological} we will classify the topological Weihrauch degree of the
G\"odel problem and its variant. We will also study the question, which oracle 
among $\emptyset',\emptyset'',...$ is optimal to validate our classification.
In Section~\ref{sec:computability} we classify the computability-theoretic Weihrauch degree
of these problems, which turns out to be significantly different.
In Section~\ref{sec:closure-lower} we discuss closure properties of $\G$ that help us
to say something on lower bounds on these Weihrauch degrees.

\section{Weihrauch complexity}
\label{sec:weihrauch}

In this section we introduce some basic definitions of Weihrauch complexity.
A more detailed survey can be found in \cite{BGP21}.
We recall that a {\em represented space} $(X,\delta)$ is a set $X$ together with a 
{\em representation} $\delta:\In\IN^\IN\to X$, i.e., a surjective potentially partial map $\delta$.
If $f:\In X\mto Y$ is a multivalued partial map on represented spaces $(X,\delta_X)$ and $(Y,\delta_Y)$,
then $F:\In\IN^\IN\to\IN$ is called a {\em realizer} of $f$, if $\delta_YF(p)\in f\delta_X(p)$ for all $p\in\dom(f\delta_X)$.
We denote the fact that $F$ realizes $f$ by $F\vdash f$.
We consider multivalued maps with realizers as problems.

\begin{definition}[Problem]
A multivalued map $f:\In X\mto Y$ on represented spaces with a realizer is called a {\em problem}.
\end{definition}

Now we are prepared to define Weihrauch complexity and some variants of it.
By $\id:\IN^\IN\to\IN^\IN$ we denote the identity on Baire space $\IN^\IN$,
by $\langle.\rangle$ we denote a standard pairing function on $\IN^\IN$.
We also use the angle bracket for countable tupling functions, Cantor tupling functions on $\IN$
and corresponding pairing functions for $\IN\times\IN^\IN$.

\begin{definition}[Weihrauch complexity]
\label{def:Weihrauch}
Let $f:\In X\mto Y$ and $g:\In Z\mto W$ be two multi-valued functions. 
\begin{enumerate}
\item $f$ is {\em Weihrauch reducible} to $g$, in symbols $f\leqW g$, if there are computable $H,K:\In\IN^\IN\to\IN^\IN$ such that $H\langle\id,GK\rangle\vdash f$ whenever $G\vdash g$.
\item $f$ is {\em strongly Weihrauch reducible} to $g$, in symbols $f\leqSW g$, if there are computable $H,K:\In\IN^\IN\to\IN^\IN$ such that
         $HGK\vdash f$ whenever $G\vdash g$.
\end{enumerate}
We write $\leq_\mathrm{W}^p$ and $\leq_\mathrm{sW}^p$ for the relativized versions of this reducibilities, where
$H,K$ are only required to be computable relative to $p\in\IN^\IN$. Analogously, we write $\leq_\mathrm{W}^*$ and $\leq_\mathrm{sW}^*$
if $H,K$ are only required to be continuous.
\end{definition}

The diagram in Figure~\ref{fig:Weihrauch} illustrates the situation of $\leqW$ and its relativizations.

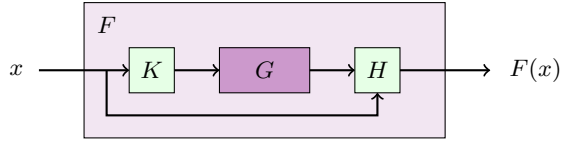
\begin{figure}[htb]
\begin{center}
\begin{tikzpicture}[scale=.3,auto=left]
\draw[style={fill=violet!10}] (-4,6) rectangle (12,0);
\draw[style={fill=green!10}]  (8,4) rectangle (10,2);
\draw[style={fill=green!10}]  (-2,4) node (v1) {} rectangle (0,2);
\draw[style={fill=violet!40}]  (2,4) rectangle (6,2);
 \node at (-1,3) {$K$};
\node at (9,3) {$H$};
\node at (4,3) {$G$};
\node at (-3,5) {$F$};
\node at (-7,3) {$x$};
\node at (16,3) {$F(x)$};
\draw[->,thick] (0,3) -- (2,3);
\draw[->,thick] (-6,3) -- (-2,3);
\draw[->,thick] (6,3) -- (8,3) ;
\draw[->,thick] (10,3) -- (14,3);
\draw[->,thick] (-3,3) -- (-3,1) -- (9,1) -- (9,2);
\end{tikzpicture}
\end{center}
\caption{Weihrauch reducibility.}
\label{fig:Weihrauch}
\end{figure}

We use the usual notations $\equivW$ and $\equivSW$ for the corresponding equivalences,
and similarly also for the relativized versions.
The distributive lattice induced by $\leqW$ is usually referred to as {\em Weihrauch lattice}.

By $f^*,\widehat{f}$ and $f^\diamond$ we denote the {\em finite parallelization}, {\em parallelization},
and {\em diamond} operations on problems. 
The definitions can be found in \cite{BGP21}, except for the diamond operation
that was introduced by Neumann and Pauly~\cite{NP18}.
While $f^*$ can be seen as closure under the parallel product $f\times f$,
$f^\diamond$ can be seen as closure under the compositional product $f\star f$ by a result of Westrick~\cite{Wes21} (see below).
Again, the definitions of $\times$ and $\star$ can be found in \cite{BGP21}.
For our purposes we just mention the characterization that
\[f\star g\equivW\max\nolimits_{\leqW}\{f_0\circ g_0:f_0\leqW f\mbox{ and }g_0\leqW g\},\]
which was proved in \cite[Corollary~3.7]{BP18}.
Likewise, we obtain by a theorem of Westrick \cite[Theorem~1]{Wes21} that every problem $f$ with $\id\leqW f$ satisfies
\[f^\diamond\equivW\min\nolimits_{\leqW}\{g:f\leqW g\star g\leqW g\},\]
i.e., $f^\diamond$ gives us the smallest Weihrauch degree above $f$ that is closed under
compositional product. We will also need the {\em first-order version} of a problem $f:\In X\mto Y$
that can be characterized according to~\cite[Theorem~2.2]{DSY23} by
\[^1f\equivW\max\nolimits_{\leqW}\{g:\mbox{$g:\In\IN^\IN\mto\IN$ and $g\leqW f$}\}.\]
The first-order part $^1f$ captures the strongest problem with codomain $\IN$ that is below $f$.
This concept was recently introduced by
Dzhafarov, Solomon, and Yokoyama~\cite{DSY23} and has also been studied by Valenti~\cite{Val21}
and Valenti and Sold\`a~\cite{SV22}.

Finally, we also need the {\em jump} $f'$ of a problem that was introduced in \cite{BGM12}.
For a problem $f:\In X\mto Y$, the {\em jump} $f':\In X'\mto Y$
is set-theoretically speaking the same problem as $f$, but the representation
of the input set $X$ is changed. If $\delta$ is the representation of $X$,
then $\delta':=\delta\circ\lim$ is the representation of $X'$.
The $n$--th jump of $f$ is denoted by $f^{(n)}$.
More details can be found in \cite{BGP21}.

We close this section with the definition of a number of problems that we are going to use in the following.
By $\Tr\In\{0,1\}^*$ we denote the set of binary trees and by $[T]$ the set of infinite paths of $T\in\Tr$.
By $\widehat{n}=nnn...\in\IN^\IN$ we denote the constant sequence with value $n\in\IN$.

The problems $\LPO$ and $\LLPO$ have been called {\em limited principle of omniscience} and
{\em lesser limited principle of omniscience}, respectively, by Bishop~\cite{Bis67} in the context of constructive mathematics.

\begin{definition}[Some problems]
We define the following problems.
\begin{enumerate}
\item $\LPO:\IN^\IN\to\{0,1\},\LPO(p)=1:\iff p=\widehat{0}$
\item $\LLPO:\In\IN^\IN\mto\{0,1\},i\in\LLPO\langle p_0,p_1\rangle:\iff p_i\not=\widehat{0}$,\\
with $\dom(\LLPO)=\{\langle p_0,p_1\rangle\in\IN^\IN:\langle p_0,p_1\rangle\not=\widehat{0}\}$. 
\item $\lim_\IN:\IN^\IN\to\IN,p\mapsto\lim_{n\to\infty}p(n)$\hfill (limit on natural numbers)
\item $\B:\In\IN^\IN\mto\IN,p\mapsto\{m\in\IN:(\forall n\in\IN)\;p(n)\leq m\}$\hfill (boundedness problem)
\item $\inf:\In\IN^\IN\to\IN,p\mapsto\min\C_\IN(p)$ \hfill (least number problem)
\item $\min:\IN^\IN\to\IN,p\mapsto\min\range(p)$ \hfill (minimum problem)
\item $\CL_\IN:\In\IN^\IN\mto\IN,p\mapsto\{n\in\IN:n\mbox{ cluster point of }p\}$ \hfill (cluster point problem)
\item $\BWT_\IN$ is the restriction of $\CL_\IN$ to bounded sequences \hfill (Bolzano-Weierstra\ss{})
\item $\lim\inf\nolimits_\IN:\In\IN^\IN\to\IN,p\mapsto\min\CL_\IN(p)$ \hfill (limit inferior problem)
\item $\WKL:\In\Tr\mto2^\IN,T\mapsto[T]$ \hfill (Weak K\H{o}nig's lemma)
\end{enumerate}
\end{definition}

All definitions are meant with their natural domains, i.e., $\lim_\IN$ is defined on converging sequences,
$\B$ on bounded sequences, $\CL_\IN$ of sequences with cluster points, etc.

\section{Closed and compact choice on the natural numbers}
\label{sec:choice}

Now we want to introduce our benchmark problems, which are {\em closed choice} on natural numbers
and {\em compact choice} on natural numbers.
Choice problems have been studied for a while and they have been uniformly defined for more
general spaces than the natural numbers (see~\cite{BG11a,BBP12,BGM12,BGP21} for more information).
For our purposes it is sufficient to define these choice problems on $\IN$.

\begin{definition}[Closed and compact choice]
We define {\em closed choice} $\C_\IN$ and {\em compact choice} $\K_\IN$ on natural numbers as follows:
\begin{enumerate}
\item $\C_\IN:\In\IN^\IN\mto\IN, p\mapsto\{n\in\IN:(\forall k)\;p(k)\not=n\}$,\\
        with $\dom(\C_\IN)=\{p\in\IN^\IN:\range(p)\subsetneqq\IN\}$,
\item $\K_\IN:\In\IN^\IN\times\IN\mto\IN,(p,m)\mapsto\{n\leq m:(\forall k)\;p(k)\not=n\}$,\\
        with $\dom(\K_\IN)=\{(p,m)\in\IN^\IN\times\IN:\range(p)\subsetneqq\{0,...,m\}\}$.
\end{enumerate}
\end{definition}

Hence, the task of $\C_\IN$ is to find a natural number which is not enumerated by the input sequence.
Compact choice $\K_\IN$ has essentially the same task, except that only numbers $n\leq m$ below
some additionally given bound $m$ are considered.

The following was proved by Valenti~\cite[Page~98, Proposition~4.48, Corollary~4.50]{Val21} and Sold\`a and Valenti~\cite[Proposition~7.1, Theorem~7.2, Corollary~7.6]{SV22}.
An independent proof of the first equivalences in each item is due to Dzhafarov, Solomon, and Yokoyama~\cite[Theorems~4.1 and 4.2]{DSY23}.
We did not find an explicit statement of the second equivalence in (2), hence this is probably new and we add a proof,
which can easily be derived using the methods of Sold\'a and Valenti.
We note that $\C_2^*\equivSW\LLPO^*\equivSW\K_\IN$ by \cite[Proposition~10.9]{BGM12}.

\begin{theorem}[Closed and compact choice]
\label{thm:first-order}
For all $n\in\IN$:
\begin{enumerate}
\item $^1(\lim\nolimits^{(n)})\equivSW\C_\IN^{(n)}\equivW(\LPO^{(n)})^\diamond$,
\item $^1(\WKL^{(n)})\equivSW\K_\IN^{(n)}\equivW(\LLPO^{(n)})^\diamond$.
\end{enumerate}
\end{theorem}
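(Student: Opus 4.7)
The only new ingredient is the second equivalence of item~(2), $\K_\IN^{(n)}\equivW(\LLPO^{(n)})^\diamond$; the remaining equivalences are attributed to the cited works. My plan is to mirror the (cited) argument for $\C_\IN^{(n)}\equivW(\LPO^{(n)})^\diamond$, replacing $\C_\IN$ by $\K_\IN$ and $\LPO$ by $\LLPO$, and to leverage the known equivalence $\K_\IN\equivSW\LLPO^*$ from \cite[Proposition~10.9]{BGM12} (quoted just before the theorem).

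For the direction $(\LLPO^{(n)})^\diamond\leqW\K_\IN^{(n)}$, I invoke Westrick's characterization recalled in Section~\ref{sec:weihrauch}: since $\id\leqW\LLPO^{(n)}$, the degree $(\LLPO^{(n)})^\diamond$ is the $\leqW$-least degree $g$ satisfying $\LLPO^{(n)}\leqW g$ and $g\star g\leqW g$. Taking $g:=\K_\IN^{(n)}$, the first reduction is obtained by taking the $n$-th jump of the standard fact $\LLPO\leqSW\K_\IN$. The second reduction $\K_\IN^{(n)}\star\K_\IN^{(n)}\leqW\K_\IN^{(n)}$ is what does the real work: using $\K_\IN\equivSW\LLPO^*$, it reduces to the closure of $\LLPO^*$ under compositional product---a standard amalgamation over the $2^k$ possible binary outcomes of $k$ parallel $\LLPO$ queries---combined with the preservation of $\star$-closure under jumping.

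For the reverse direction $\K_\IN^{(n)}\leqW(\LLPO^{(n)})^\diamond$, I rely on the commutation of finite parallelization with jumping, $(f^*)'\equivW(f')^*$, which is a consequence of the parallelizability of $\lim$. Iterating yields $\K_\IN^{(n)}\equivW(\LLPO^*)^{(n)}\equivW(\LLPO^{(n)})^*$. Since every finite parallel power $(\LLPO^{(n)})^k$ satisfies $(\LLPO^{(n)})^k\leqW(\LLPO^{(n)})^\diamond$ (the diamond closure subsumes all finite compositional, and hence parallel, products), we obtain $(\LLPO^{(n)})^*\leqW(\LLPO^{(n)})^\diamond$, closing the chain.

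The main technical hurdle is the base-case closure $\K_\IN\star\K_\IN\leqW\K_\IN$, equivalently $\LLPO^*\star\LLPO^*\leqW\LLPO^*$, which is where the amalgamation method referenced in the abstract is applied at the level of $\LLPO^*$; once it is in hand, the rest is structural. A secondary, routine obstacle is transporting this closure together with the commutation $(f^*)'\equivW(f')^*$ through $n$ iterations of the jump, which I would dispatch by induction on $n$.
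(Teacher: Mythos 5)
Your overall route is genuinely different from the paper's, but it has two gaps at exactly the places where you declare the work routine. First, the crux of your direction $(\LLPO^{(n)})^\diamond\leqW\K_\IN^{(n)}$ is the closure $\K_\IN^{(n)}\star\K_\IN^{(n)}\leqW\K_\IN^{(n)}$, and this is not a citable standard fact: in the paper it is Corollary~\ref{cor:closure-composition}~(2), obtained as a \emph{consequence} of the very theorem you are proving, and explicitly flagged as previously unknown for $\K_\IN$ (it even refutes a published claim that $\K_\IN$ is complete). Your sketched justification --- amalgamating over the $2^k$ possible outcomes of the inner $\LLPO^*$-query --- fails as stated: for outcome vectors that are not correct answers, the derived outer instances need not lie in $\dom(\LLPO^*)$ (and their arities/bounds may never materialize), so the combined non-adaptive query can leave the domain, and a partially written invalid $\LLPO$-instance cannot be withdrawn or repaired. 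A direct proof is possible, but it must exploit the negative-information representation of $\K_\IN$: use pair candidates $\langle a,b\rangle$, delete a pair as soon as $a$ is enumerated out of the inner instance or $b$ out of the outer instance computed from $a$, and commit to the global bound only after every $a\leq m$ has either been deleted or revealed its bound; and your further appeal to ``preservation of $\star$-closure under jumping'' is again unargued. The paper sidesteps all of this by quoting Valenti's ${}^1(\widehat{f})\equivW f^\diamond$ for complete $f$, together with $\widehat{\LLPO}\equivSW\WKL$, the commutation of countable parallelization with jumps, and the already-cited ${}^1(\WKL^{(n)})\equivSW\K_\IN^{(n)}$, and only then reads off the closure under $\star$ as a corollary.

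Second, your reverse direction rests on $(f^*)'\equivW(f')^*$, justified ``by the parallelizability of $\lim$''. That argument only yields $(\widehat{f})'\equivW\widehat{f'}$ (which is what the paper uses), because with countably many components no arity has to be declared; for finite parallelization a reduction $(f^*)'\leqW(f')^*$ would have to commit to the arity of the target instance after a finite prefix, whereas a jump-name reveals the arity of the source only in the limit. In fact $\K_\IN'\equivW(\LLPO')^*$ appears to be false: $\K_\IN'\equivSW\BWT_\IN$, and on convergent sequences a reduction to $(\LLPO')^*$ would compute $\lim\nolimits_\IN$ from finitely many one-bit answers fixed after a finite prefix, which a standard finite-extension diagonalization defeats; this is consistent with the theorem itself, which uses $\diamond$ rather than $*$ at jump levels $n\geq1$. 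So the chain $\K_\IN^{(n)}\equivW(\LLPO^{(n)})^*\leqW(\LLPO^{(n)})^\diamond$ breaks at its first link, and the inequality $\K_\IN^{(n)}\leqW(\LLPO^{(n)})^\diamond$ needs a different argument, which in the paper comes out of the first-order-part machinery rather than any commutation of $*$ with the jump.
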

\begin{proof}
It only remains to proof the second equivalence in the second item.
It follows from~\cite[Theorem~4.40, Corollary~4.44]{Val21}
and Theorem~\ref{thm:first-order} as they imply
\[\K_\IN^{(n)}\equivW\!\,^1(\WKL^{(n)})\equivW\!\,^1(\widehat{\LLPO^{(n)}})\equivW(\LLPO^{(n)})^\diamond,\]
since $\LLPO^{(n)}$ is complete by~\cite[Propositions~4.19, 6.3]{BG21a},
$\widehat{\LLPO}\equivSW\WKL$ by \cite[Theorem~8.2]{BG11} and parallelization commutes
with jumps by \cite[Proposition~5.7~(3)]{BGM12}.
This implies $(\K_\IN^{(n)})^\diamond\equivW\K_\IN^{(n)}$.
\qed
\end{proof}

It is worth formulating the special case for $n=1$ of Theorem~\ref{thm:first-order}.

\begin{corollary}[Closed and compact choice]
\label{cor:closed-compact}
We obtain:
\begin{enumerate}
\item $\LPO^\diamond\equivW\C_\IN\equivW\!\,^1 \lim$,
\item $\LLPO^\diamond\equivW\K_\IN\equivW\!\,^1 \WKL$.
\end{enumerate}
\end{corollary}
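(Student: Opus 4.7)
The plan is to derive Corollary~\ref{cor:closed-compact} as the base case of Theorem~\ref{thm:first-order}. First I would observe that under the standard convention $f^{(0)}=f$, setting the jump index to zero yields $\lim^{(0)}=\lim$, $\C_\IN^{(0)}=\C_\IN$, $\LPO^{(0)}=\LPO$, and analogously $\WKL^{(0)}=\WKL$, $\K_\IN^{(0)}=\K_\IN$, $\LLPO^{(0)}=\LLPO$. Substituting these identifications into the two chains of equivalences in Theorem~\ref{thm:first-order} produces exactly
\[{}^1\lim \equivSW \C_\IN \equivW \LPO^\diamond
  \quad\text{and}\quad
  {}^1\WKL \equivSW \K_\IN \equivW \LLPO^\diamond.\]

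Since strong Weihrauch equivalence trivially implies ordinary Weihrauch equivalence, each $\equivSW$ appearing above may be weakened to $\equivW$, and we recover the statement of the corollary verbatim. No additional content is required because the reductions asserted in Theorem~\ref{thm:first-order} are uniform in the jump index and need no separate treatment at the base case.

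The only thing worth a moment of verification is that the representations used in the theorem's reductions specialize correctly when no jump is applied, but this is immediate from the fact that the $0$-th jump of a represented space is the space itself. Consequently I expect no genuine obstacle in this argument; the corollary serves primarily to make the unjumped instances of Theorem~\ref{thm:first-order} directly visible and quotable in subsequent sections, where they will be compared against the Weihrauch degree of the G\"odel problem and its relatives.
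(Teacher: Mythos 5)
Your proposal is correct and is essentially the paper's own argument: the corollary is obtained by specializing Theorem~\ref{thm:first-order} to the unjumped case (the convention $f^{(0)}=f$, i.e.\ the bottom level of the hierarchy, despite the paper's prose saying ``$n=1$'') and weakening $\equivSW$ to $\equivW$. No further justification is needed.
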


Neumann and Pauly~\cite[Proposition~10]{NP18} first proved the statement for $\LPO$
and this statement as well as the one for $\LLPO$ is also included in work by
Valenti and Sold\`a~\cite[Proposition~7.1, Theorem~7.2]{SV22}.
We directly obtain the following closure properties under composition.

\begin{corollary}[Closure under composition]
\label{cor:closure-composition}
For all $n\in\IN$ we obtain:
\begin{enumerate}
\item $\C_\IN^{(n)}\star\C_\IN^{(n)}\equivW\C_\IN^{(n)}$,
\item $\K_\IN^{(n)}\star\K_\IN^{(n)}\equivW\K_\IN^{(n)}$.
\end{enumerate}
\end{corollary}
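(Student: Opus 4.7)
The plan is to derive both items of Corollary~\ref{cor:closure-composition} directly from Theorem~\ref{thm:first-order} combined with Westrick's characterization of the diamond operator recalled in Section~\ref{sec:weihrauch}: for any problem $f$ with $\id\leqW f$ one has $f^\diamond\equivW\min\nolimits_{\leqW}\{g:f\leqW g\star g\leqW g\}$.

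First I note that for any such $f$, the degree $f^\diamond$ is itself a member of the set on the right-hand side (since it realizes the minimum), and hence $f^\diamond\star f^\diamond\leqW f^\diamond$. The reverse reduction $f^\diamond\leqW f^\diamond\star f^\diamond$ follows from $\id$ being a left identity for $\star$ combined with monotonicity of $\star$ in the first argument: one has $f^\diamond\equivW\id\star f^\diamond\leqW f^\diamond\star f^\diamond$, where $\id\leqW f^\diamond$ holds trivially since $\id$ is computable. Thus $f^\diamond\star f^\diamond\equivW f^\diamond$ whenever $\id\leqW f$.

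Applying this to $f:=\LPO^{(n)}$ and $f:=\LLPO^{(n)}$, and using Theorem~\ref{thm:first-order} along with the $\equivW$-monotonicity of $\star$ (a direct consequence of the max characterization of $\star$ recalled in Section~\ref{sec:weihrauch}), I obtain
\[\C_\IN^{(n)}\star\C_\IN^{(n)}\equivW(\LPO^{(n)})^\diamond\star(\LPO^{(n)})^\diamond\equivW(\LPO^{(n)})^\diamond\equivW\C_\IN^{(n)},\]
and analogously $\K_\IN^{(n)}\star\K_\IN^{(n)}\equivW\K_\IN^{(n)}$, establishing both items. There is essentially no real obstacle here: the heavy lifting has already been done inside Theorem~\ref{thm:first-order}, where the analogous identity $(\K_\IN^{(n)})^\diamond\equivW\K_\IN^{(n)}$ is in fact derived in passing. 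The only small point of care is the easy lower bound in the second paragraph, which upgrades the fact that $f^\diamond$ is closed under $\star$ to the genuine equivalence $f^\diamond\star f^\diamond\equivW f^\diamond$.
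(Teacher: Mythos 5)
Your proof is correct and takes essentially the same route as the paper: the corollary is read off from Theorem~\ref{thm:first-order} via $\C_\IN^{(n)}\equivW(\LPO^{(n)})^\diamond$ and $\K_\IN^{(n)}\equivW(\LLPO^{(n)})^\diamond$, together with the fact that a diamond $f^\diamond$ is closed under the compositional product by Westrick's characterization (the paper itself notes ``this implies $(\K_\IN^{(n)})^\diamond\equivW\K_\IN^{(n)}$''). Your explicit check that $f^\diamond\star f^\diamond\equivW f^\diamond$ for $\id\leqW f$ merely spells out the detail the paper leaves implicit.
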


This was known for $\C_\IN$ by unpublished work of the author, H\"olzl and Kuyper (2017)
and by  \cite[Theorem~7.2]{SV22},
but seemingly not for $\K_\IN$.\footnote{In fact, Corollary~\ref{cor:closure-composition} shows that 
\cite[Proposition~8.13]{BG21a} is incorrect as $\K_\IN$ is indeed not complete.}

In \cite[Proposition~7.2]{BR17} it was proved that closed and compact choice
and their jumps are linearly ordered in the following way.

\begin{fact}
$\K_\IN^{(n)}\lSW\C_\IN^{(n)}\lSW\K_\IN^{(n+1)}$ for all $n\in\IN$.
\end{fact}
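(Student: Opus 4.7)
The plan is to prove the two strict reductions separately: first construct the underlying strong Weihrauch reductions at the base level $n=0$ and lift them by jump-preservation, then argue strictness via jump-stable separations of $\LPO$-type and $\LLPO$-type degrees.

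For $\K_\IN \leqSW \C_\IN$, I will take $(p,m) \in \dom(\K_\IN)$ and form the sequence $q$ defined by $q(2k):=p(k)$ and $q(2k+1):=m+k+1$. Then $\range(q) = \range(p) \cup \{m+1, m+2, \ldots\}$ is a proper subset of $\IN$, and every $\C_\IN(q)$-answer automatically lies in $\{0,\ldots,m\} \setminus \range(p)$; the identity suffices as post-processor, so the reduction is strong. For $\C_\IN \leqSW \K_\IN'$, I will compute from $p$ the nondecreasing sequence $m_\ell := \min(\IN \setminus \{p(0),\ldots,p(\ell-1)\})$, which stabilizes at $m := \min\C_\IN(p)$, and output (in the $\lim$-representation of the jumped input) the $\K_\IN$-input $(q, m+1)$, where $q$ canonically enumerates $\{0,\ldots,m-1, m+1\}$: at stage $\ell$ I emit a name of $(q_\ell, m_\ell+1)$ with $q_\ell$ enumerating $\{0,\ldots,m_\ell-1, m_\ell+1\}$, the shift by one ensuring $\range(q_\ell) \subsetneqq \{0,\ldots,m_\ell+1\}$ even in the degenerate case $m=0$. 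Then $\K_\IN(q, m+1) = \{m\}$, and since $m \notin \range(p)$, the identity again suffices as post-processor. Both reductions lift to arbitrary $n$ because $\leqSW$ is preserved by jumps.

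For the strictness $\K_\IN^{(n)} \lSW \C_\IN^{(n)}$, I will combine $\LPO^{(n)} \leqSW \C_\IN^{(n)}$ (obtained by jumping the trivial $\LPO \leqSW \C_\IN$) with $\K_\IN^{(n)} \leqW \WKL^{(n)}$ (obtained by jumping $\K_\IN \equivW \!\,^1\WKL \leqW \WKL$), so that a hypothetical $\C_\IN^{(n)} \leqW \K_\IN^{(n)}$ would compose to $\LPO^{(n)} \leqW \WKL^{(n)}$, contradicting the $n$-th relativization of Brouwer's classical separation of $\LPO$ from $\WKL$.

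The strictness $\K_\IN^{(n+1)} \nleqW \C_\IN^{(n)}$ is the main obstacle. Using the characterizations $\C_\IN^{(n)} \equivW (\LPO^{(n)})^\diamond$ and $\K_\IN^{(n+1)} \equivW (\LLPO^{(n+1)})^\diamond$ from Theorem~\ref{thm:first-order}, it reduces to showing $\LLPO^{(n+1)} \nleqW (\LPO^{(n)})^\diamond$. Intuitively $\LLPO^{(n+1)}$ decodes $\mathrm{\Sigma}^0_{n+2}$ input information, whereas any compositional product of $\LPO^{(n)}$-instances only ever decodes $\mathrm{\Sigma}^0_{n+1}$ information. I plan to make this precise through a descriptive-complexity calculation that counts the Baire-measurability level of realizers — any realizer factoring through $(\LPO^{(n)})^\diamond$ has measurability below the level required to realize $\LLPO^{(n+1)}$ — and this level-counting is where I expect the bulk of the technical work to lie.
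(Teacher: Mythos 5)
Your positive reductions are correct and complete: the interleaving construction gives $\K_\IN\leqSW\C_\IN$, the stabilizing sequence of canonical names gives $\C_\IN\leqSW\K_\IN'$, and both lift because the jump is monotone with respect to $\leqSW$. (Note the jump is \emph{not} monotone for plain $\leqW$, so your parenthetical derivation of $\K_\IN^{(n)}\leqW\WKL^{(n)}$ by ``jumping'' $\K_\IN\leqW\WKL$ should instead pass through a strong reduction, e.g.\ $\K_\IN\equivSW\LLPO^*\leqSW\widehat{\LLPO}\equivSW\WKL$.) The first strictness is right in outline, but $\LPO^{(n)}\nleqW\WKL^{(n)}$ cannot just be invoked as ``Brouwer's classical separation''; it needs an argument, e.g.\ parallelize (parallelization commutes with jumps) to reduce it to $\lim^{(n)}\nleqW\WKL^{(n)}$, which follows from the low basis theorem relativized to $\emptyset^{(n)}$. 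For comparison, the paper itself does not prove this Fact at all but cites \cite[Proposition~7.2]{BR17}.

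The genuine gap is the second strictness $\K_\IN^{(n+1)}\nleqW\C_\IN^{(n)}$, which is the hard half and which you leave as a plan. Reducing it to $\LLPO^{(n+1)}\nleqW(\LPO^{(n)})^\diamond$ and then proposing to ``count the Baire-measurability level of realizers'' is circular as stated: the measurability level of a multivalued problem is defined via the existence of a realizer of that level, so the assertion that every realizer factoring through $(\LPO^{(n)})^\diamond$ lies strictly below the level needed for $\LLPO^{(n+1)}$ is exactly the separation to be proved, and you give no argument for it. A clean way to close the gap, consistent with the tools the paper uses for its other jump separations, is jump inversion: since $\C_\IN^{(n)}\leqSW\lim^{(n)}\equivSW\id^{(n+1)}$, a reduction $\K_\IN^{(n+1)}\leqW\C_\IN^{(n)}$ would give $(\K_\IN^{(n)})'\leqW(\id^{(n)})'$, and $n+1$ applications of $f'\leq_\mathrm{W}^p g'\TO f\leq_\mathrm{W}^{p'}g$ (\cite[Theorem~11]{BHK17}) yield $\K_\IN\leq_\mathrm{W}^{*}\id$, i.e.\ continuity of $\K_\IN$ (equivalently of $\LLPO$), which fails by the usual finite-extension argument. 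This argument in fact shows the separation already holds for continuous reductions, which is the sharpest form your level-counting could have delivered; alternatively, a direct priority-free finite-extension construction against all computable $H,K$ (in the style of Proposition~\ref{prop:optimal-upper}) would also work, but some such concrete diagonalization or inversion step must be supplied.
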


Altogether, these properties of closed and compact choice justify to use them and their jumps as benchmark 
problems for the classification of the complexity of G\"odel problem $\G$ and its variants, which are all first-order
problems.

We close this section with some further characterizations of some of the choice problems.
Most of these are well-known, except perhaps the one on the limes inferior.

{\begin{proposition}[Least number problem, minimum, limes inferior]
\label{prop:lim-inf}
\begin{enumerate}
\item $\K_\IN\equivSW\LLPO^*\lSW\min\equivSW\LPO^*\lSW\C_\IN\equivSW\lim_\IN\equivSW\inf\equivW\B$,
\item $\K_\IN'\equivSW\BWT_\IN\lSW\min'\lSW\C_\IN'\equivSW\CL_\IN\equivSW\lim\inf\equivSW\inf'$.
\end{enumerate}
\end{proposition}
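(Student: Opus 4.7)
The plan is to establish each link in both chains separately, deriving most of part~(2) by jumping the constructions of part~(1) combined with known jump characterisations of the choice principles.

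Several equivalences in~(1) can be cited directly: $\K_\IN\equivSW\LLPO^*$ follows from $\LLPO\equivSW\C_2$ together with \cite[Proposition~10.9]{BGM12} (as already noted in the paper), while $\C_\IN\equivSW\lim_\IN$ is a classical result of Brattka--Gherardi. The strict separations $\LLPO^*\lSW\LPO^*$ and $\LPO^*\lSW\C_\IN$ follow respectively from $\LPO\nleqW\LLPO^*$ and from the fact that $\LPO^*$ has output length computable from the input whereas $\C_\IN$ admits unbounded outputs, both being standard results in the Weihrauch lattice.

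The novel content in~(1) is the chain $\min\equivSW\LPO^*$, $\inf\equivSW\lim_\IN$, and $\inf\equivW\B$. For $\LPO^*\leqSW\min$, I would amalgamate: given instances $p_0,\dots,p_{k-1}$, enumerate into the target sequence $q$ at stage $s$ the number $\sum\{2^i:p_i(t)=0\text{ for all }t\le s\}$; this quantity is non-increasing in $s$ and stabilises at $\sum\{2^i:p_i=\widehat 0\}$, so $\min\range(q)$ is precisely the bitmask of the $\LPO$-answers. The converse $\min\leqSW\LPO^*$ uses $\min\range(p)\le p(0)$, so $p(0)+1$ parallel queries ``is $n\in\range(p)$?'' for $n\le p(0)$ suffice. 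For $\inf\leqSW\lim_\IN$, take $q(s):=\min(\{0,\dots,s\}\setminus\{p(0),\dots,p(s)\})$; this is non-decreasing, bounded by $\inf p$, and eventually equals $\inf p$. The converse $\lim_\IN\leqSW\inf$ follows from $\C_\IN\equivSW\lim_\IN$ and the trivial $\C_\IN\leqSW\inf$ since $\inf p\in\C_\IN(p)$. For $\B\leqSW\inf$, enumerate $n$ into the $\inf$-target as soon as $p(k)>n$ is witnessed, making the smallest missing value equal to $\sup p$. The converse $\inf\leqW\B$ is obtained by composing $\inf\leqSW\lim_\IN$ with $\lim_\IN\leqW\B$, where the latter uses the ``last-change'' sequence $r(s):=\max\{t\le s:p(t)\ne p(t+1)\}$: any bound $K$ on $r$ certifies that $p$ is constant beyond stage $K$, whence $\lim p=p(K+1)$; crucially the outer reduction consults the original input $p$, so this step is only Weihrauch and not strong.

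Part~(2) is obtained by jumping these constructions and using the known characterisations $\C_\IN'\equivSW\CL_\IN$ and $\K_\IN'\equivSW\BWT_\IN$. The identity $\lim\inf\equivSW\inf'\equivSW\CL_\IN$ then follows by the same refinement pattern as in~(1): $\CL_\IN\leqSW\lim\inf$ is trivial because $\lim\inf(p)\in\CL_\IN(p)$, while $\lim\inf\leqSW\inf'$ arises from the jumped ``smallest missing'' construction, using the double-limit representation $\lim\inf p=\lim_n\lim_s\min\{p(j):n\le j\le s\}$ fed into $\lim_\IN'\equivSW\inf'$. I expect the main obstacle to be the asymmetry in $\inf\equivW\B$: the outer reduction must consult the original $p$ to extract $\lim p=p(K+1)$, so strong reducibility must be given up at exactly this link. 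Care is needed to ensure all other links really are strong reductions, so that the precise mixture of $\equivSW$, $\lSW$, and $\equivW$ in the two chains is correctly established.
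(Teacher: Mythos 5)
Most of your part~(1) is sound and, in places, more self-contained than the paper, which simply cites $\min\equivSW\LPO^*$ and $\C_\IN\equivSW\lim_\IN\equivSW\inf$ from the literature while you prove them directly; your double-limit argument $\lim\inf p=\lim_n\lim_s\min\{p(j):n\le j\le s\}$ fed into $\lim_\IN'$ is essentially the paper's alternative proof of $\lim\inf\leqSW\C_\IN'$ (via $\lim_\IN\circ\widehat{\min}\leqSW\lim_\IN\circ\lim$). Two small repairs are needed: in $\LPO^*\leqSW\min$ the backward functional only sees the number $\min\range(q)$, and the bare bitmask does not determine the number $k$ of instances, which is part of the output of $\LPO^*=\bigsqcup_k\LPO^k$; encode $\langle k,\mathrm{bitmask}\rangle$ instead (monotone in the second argument for fixed $k$), and in $\inf\leqSW\lim_\IN$ take the minimum over a window of size $s+2$ so the set is never empty. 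Also, your stated reason for $\C_\IN\nleqW\LPO^*$ (``output length computable from the input'') is a heuristic, not a proof; this separation should be cited (the paper uses \cite[Proposition~3.5, Example~3.12]{BBP12}) or argued properly.

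The genuine gap is in part~(2): ``jumping the constructions'' only yields the positive reductions $\K_\IN'\leqSW\min'\leqSW\C_\IN'$, because the jump is monotone with respect to $\leqSW$; it does \emph{not} yield the strictness $\min'\nleqSW\K_\IN'$ and $\C_\IN'\nleqSW\min'$. Non-reducibility is not preserved under jumps in general: a strict reduction at the base level can collapse after jumping, since the jumped problems receive much weaker (limit) input information and the reduction functionals may exploit this. The paper closes exactly this hole by invoking \cite[Theorem~11]{BHK17}, which gives $f'\leq_{\mathrm{W}}g'\TO f\leq_{\mathrm{W}}^{\emptyset'}g$ (and its relativizations), together with the observation that the unjumped separations $\min\nleqW\K_\IN$ and $\C_\IN\nleqW\min$ hold \emph{topologically}, i.e.\ relative to every oracle; only then do they lift to $\min'\nleqSW\BWT_\IN\equivSW\K_\IN'$ and $\CL_\IN\equivSW\C_\IN'\nleqSW\min'$. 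Your proposal neither invokes such a transfer principle nor proves the jumped separations directly, and it also never records that your base-level separations are oracle-independent (they are, but this is precisely what the transfer needs). As written, the two $\lSW$ claims in item~(2) are unproved.
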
}
\begin{proof}
$\K_\IN\equivSW\LLPO^*$ was proved in \cite[Proposition~10.9]{BGM12}.
$\LPO^*\equivSW\min$ was stated in \cite[Proposition~11.7.22]{BGP21}.
We obtain $\LPO^*\nleqW\LLPO^*$, since we have
$\LLPO^*\leqW\WKL$,
but $\LPO\nleqW\WKL$, as $\widehat{\LPO}\equivSW\lim\nleqW\WKL$.
The strict reduction $\LPO^*\lSW\C_\IN$ has been proved in \cite[Proposition~3.5, Example~3.12]{BBP12}.
For $\C_\IN\equivSW\lim_\IN\equivSW\inf$ see \cite[Theorem~11.7.13]{BGP21}.
Here the equivalence $\C_\IN\equivSW\inf$ is originally from \cite[Proposition~7.1]{BR17}.
The equivalence $\C_\IN\equivW\B$ is easy to see and was essentially proved in \cite[Proposition~3.3]{BG11a}.
The equivalence $\K_\IN'\equivSW\BWT_\IN$ and $\C_\IN'\equivSW\CL_\IN$ are from \cite[Corollaries~9.10 and 11.10]{BGM12}
and \cite{BCG+17}, 
the separations in  $\K_\IN'\lSW\min'\lSW\C_\IN'$ follow from \cite[Theorem~11]{BHK17}, since
the separations without jump are already topological separations.
The equivalence $\C_\IN'\equivSW\inf'$ follows as jumps are monotone with respect to $\equivSW$.
It finally remains to prove $\C_\IN'\equivSW\lim\inf$. This follows from the previous equivalence,
by \cite[Proposition~3.6]{BHK18}, which tells us that the jump of the input space in $\inf$ (that represents
closed sets by enumeration of the complement), is equivalent to representing the closed set
as set of cluster points of a sequence.
Instead of using $\C_\IN\equivSW\inf$ one can also use $\LPO^*\equivSW\min$ to conclude
\[\lim\inf\nolimits_\IN\leqSW\lim\nolimits_\IN\circ\widehat{\min}\leqSW\lim\nolimits_\IN\circ\lim\equivSW\lim\nolimits_\IN'\equivSW\C_\IN'.\]
Here the first reduction follows as a point $p\in\IN^\IN$ can be translated into a sequence $(p_n)_{\in\IN}$ in $\IN^\IN$
such that $p_n$ contains exactly those numbers that appear at least $n$--times in $p$. Then $\min(p_n)$ converges to
the least cluster point.
The lower bound follows from $\C_\IN'\equivSW\CL_\IN\leqSW\lim\inf_\IN$.
\qed
\end{proof}

We emphasize the following fact.

\begin{fact}
\label{fact:boundedness}
$\C_\IN\equivW\B\lSW\C_\IN$. 
\end{fact}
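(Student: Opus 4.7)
The equivalence $\C_\IN \equivW \B$ was already noted in Proposition~\ref{prop:lim-inf} (following \cite[Proposition~3.3]{BG11a}), so the content to establish is the strict strong reduction $\B \lSW \C_\IN$. This amounts to two separate claims: the positive direction $\B \leqSW \C_\IN$ and the separation $\C_\IN \nleqSW \B$. The structural feature to exploit is that under strong reducibility, the post-processor $H$ has access only to the output of the $\B$-realizer, not to the original input.

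For $\B \leqSW \C_\IN$, the plan is to use the natural forward reduction. Given a bounded sequence $p$, let $K(p)$ be a sequence that computably enumerates the set $\{0,1,\dots,p(n)\}$ as $n$ ranges over $\IN$. Then $\range(K(p)) = \{0,1,\dots,\sup p\}\subsetneqq\IN$, so $K(p)\in\dom(\C_\IN)$, and any $m\in\IN\setminus\range(K(p))$ satisfies $m\geq \sup p+1$, which is in particular an upper bound for $p$. Hence the identity on $\IN$ serves as the outer reduction $H$.

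For the separation $\C_\IN \nleqSW \B$, I plan to argue by contradiction from the hypothetical existence of computable $K,H$ realizing such a reduction. For each $n\in\IN$, pick a canonical enumeration $p_n$ of $\IN\setminus\{n\}$; then $p_n\in\dom(\C_\IN)$ and $\C_\IN(p_n)=\{n\}$ is a singleton. By the reduction hypothesis, $K(p_n)\in\dom(\B)$ is a bounded sequence with some finite supremum $M_n:=\sup K(p_n)$, and for every $m\geq M_n$ some realizer of $\B$ may legitimately return $m$, forcing $H(m)=n$. Applying this to two distinct indices $n\neq n'$, any $m\geq \max(M_n,M_{n'})$ would simultaneously have to satisfy $H(m)=n$ and $H(m)=n'$, which is absurd. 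The main obstacle I foresee is choosing inputs that pin the $\C_\IN$-answer down to a single value, so that the constraint on $H$ becomes rigid rather than multivalued; the singleton complements $\IN\setminus\{n\}$ accomplish precisely this. Since the argument uses only continuity of $K$ and $H$, it actually yields the stronger topological separation $\C_\IN \nleqSWS \B$.
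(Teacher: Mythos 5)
Your proof is correct, but it takes a more self-contained route than the paper, which disposes of the separation purely by citation: the equivalence $\C_\IN\equivW\B$ is taken from Proposition~\ref{prop:lim-inf} (as you do), and $\C_\IN\nleqSW\B$ is quoted from \cite[Proposition~11.6.18]{BGP21} without argument. Your direct argument is the natural one and is sound: the reduction $\B\leqSW\C_\IN$ via enumerating $\bigcup_n\{0,\dots,p(n)\}$ with $H=\id$ works, and the separation correctly exploits that solution sets of $\B$ are upward closed while $\C_\IN(p_n)=\{n\}$ is a singleton, so that for any fixed name of a sufficiently large $m$ there are realizers of $\B$ returning that name on both $K(p_n)$ and $K(p_{n'})$ (realizers need not be computable, so they can be prescribed pointwise), forcing $H$ to name two distinct numbers on one input. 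Two small points of precision: $H$ acts on names rather than on numbers, so one should fix a single canonical name of $m$ (say $\widehat{m}$) and have both realizers output it; and your closing remark undersells the argument --- it uses no continuity of $H,K$ at all, so it shows that no functions whatsoever give a strong reduction, which in particular yields the topological separation $\C_\IN\nleqSWS\B$ that you claim. What the paper's citation buys is brevity; what your argument buys is an explicit, elementary proof that also makes transparent exactly where strong reducibility (no access of $H$ to the original input) is the decisive restriction, in contrast to the ordinary reduction witnessing $\C_\IN\equivW\B$.
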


The separation follows from \cite[Proposition~11.6.18]{BGP21}.

\section{The topological classification}
\label{sec:topological}

The purpose of this section is to classify $\G,\K,\G_\geq$ and $\K_\geq$ from a topological perspective.
This as such is pretty simple and we obtain the following result.

\begin{theorem}[The topological degree]
\label{thm:topological}
$\B\equiv_{\rm sW}^*\Kol_\geq\equiv_{\rm W}^*\G\equiv_{\rm sW}^*\K\equiv_{\rm sW}^*\C_\IN$ and $\G_\geq$ is continuous.
\end{theorem}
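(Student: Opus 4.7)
The plan is to split the theorem into the continuity claim for $\G_\geq$ and four topological equivalences, with the last equivalence presenting the main technical obstacle. First, I would verify that $\G_\geq$ is continuous: given $(p,m) \in \dom(\G_\geq)$, some $i \leq m$ satisfies $\varphi_i = p$, and since ``$\varphi_i = p$'' is a $\pO{2}$ predicate in $(p,i)$, an $\emptyset''$-oracle procedure enumerates $\{i \leq m : \varphi_i = p\}$ and returns its minimum; being oracle-computable, this realizer is continuous.

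With $\G_\geq$ continuous, the two equivalences involving $\G$ fall out of Lemma~\ref{lem:basic-problems}. For $\G \equiv_{\mathrm{W}}^* \Kol_\geq$, one direction is the strong reduction $\Kol_\geq \leqSW \G$ from the lemma, and the reverse combines the decomposition $\G = \G_\geq \circ (\id, \Kol_\geq)$ with the continuity of $\G_\geq$ just established. For $\G \equiv_{\mathrm{sW}}^* \Kol$, the reduction $\G \leqSW \Kol$ is from the lemma, while $\Kol \leqSWS \G$ takes $K = \id$ and post-processes with $H(i) := \Kol(\varphi_i)$, which is trivially continuous on the discrete space $\IN$.

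For $\Kol \equiv_{\mathrm{sW}}^* \C_\IN$, I would invoke $\C_\IN \equivSW \inf$ from Proposition~\ref{prop:lim-inf}. The direction $\Kol \leqSWS \C_\IN$ continuously (via an $\emptyset'$-oracle) preprocesses $p$ into an enumeration of the $\sO{2}(p)$ set $\{i : \varphi_i \neq p\}$, and feeds it to $\inf$ to recover $\min \G(p) = \Kol(p)$. The converse $\C_\IN \leqSWS \Kol$ uses the amalgamation trick announced in the abstract: from an enumeration $p$ of $\IN \setminus A$, build $q(n) := 1 + \max\{\varphi_i^n(n) : i \in p(\{0,\ldots,n-1\}), \varphi_i^n(n) \downarrow\}$, which diagonalizes against each $\varphi_i$ with $i \in \IN \setminus A$ and forces $\Kol(q) \in A$.

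Finally, for $\B \equiv_{\mathrm{sW}}^* \Kol_\geq$ the direction $\Kol_\geq \leqSWS \B$ is obtained by defining $K(p)(n)$ to be the least $i \leq n$ whose simulation $\varphi_i^n$ matches $p$ on $\{0,\ldots,n\}$ (with default $n$); for computable $p$ this sequence is eventually constant at $\Kol(p)$, so any $\B$-output bounds $\Kol(p)$. The reverse $\B \leqSWS \Kol_\geq$ is the hardest step, and I expect it to be the main obstacle: from a bounded $p$ one must continuously produce a computable $K(p)$ with $\Kol(K(p)) \geq \sup p$. My plan is to adapt the amalgamation from the previous step by diagonalizing against $\varphi_i$ for $i$ below the running maximum of $p$, so that the minimum G\"odel number of $K(p)$ is forced above $\sup p$; the delicate point will be to verify that this indeed produces a computable output while $K$ remains continuous on all of $\dom(\B)$.
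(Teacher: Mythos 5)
Your outline gets the easy directions right (the continuous post-processing $i\mapsto\Kol(\varphi_i)$ for $\Kol\leqSWS\G$, and $\Kol\leqSWS\C_\IN$ by $\emptyset'$-enumerating $\{i:\varphi_i\neq p\}$ and applying $\inf$), but the two hard directions have a genuine gap. In $\C_\IN\leqSWS\Kol$ (and in your planned $\B\leqSWS\Kol_\geq$) the inner problem's domain consists of the \emph{absolutely} computable sequences, so the preprocessing $K$ must send every name of a $\C_\IN$- or $\B$-instance --- including non-computable enumerations and non-computable bounded sequences --- to a computable sequence. Your diagonalization $q(n):=1+\max\{\varphi_i^n(n):i\in p(\{0,\ldots,n-1\}),\ \varphi_i^n(n){\downarrow}\}$ depends on the order and timing of the given enumeration, so for non-computable $p$ the output $q$ is in general not computable; then $q\notin\dom(\Kol)$, a realizer of $\Kol$ may answer arbitrarily, and the reduction proves nothing. (Even for computable inputs, the claim that you diagonalize against every $i\notin A$ fails for total $\varphi_i$ that never converge within the step bound.) This is precisely the obstacle the paper's proof is built around: it reduces from $\lim_\IN\equivSW\C_\IN$ resp.\ from monotone bounded sequences and constructs an \emph{eventually constant} --- hence automatically computable --- sequence, using the co-c.e.\ set $R=\{\langle k,n\rangle:\min\{i:\varphi_i(k)=n\}\geq n\}$ of random pairs to force every G\"odel number, resp.\ Kolmogorov bound, of the constructed sequence above the desired answer (Theorem~\ref{thm:halting}); also note that what you call ``amalgamation'' is diagonalization, whereas amalgamation is the pocket construction of Proposition~\ref{prop:Ggeq-halting}.

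Two further steps are flawed as written. Your reduction $\Kol_\geq\leqSWS\B$ uses computable $H,K$; if it worked it would give $\Kol_\geq\leqW\B\equivW\C_\IN$, contradicting Proposition~\ref{prop:optimal-upper} ($\Kol_\geq\nleqW\K_\IN'$). Concretely, there are computable $p$ such that every index of $p$ exceeds $n$ steps on some argument $\leq n$ for infinitely many $n$; at such stages no $i\leq n$ passes your test, $K(p)(n)=n$, and $K(p)$ is unbounded, hence not a $\B$-instance. The repair is the paper's construction: use $\emptyset'$ to test convergence and march through indices, which is free at the topological level. Likewise, your continuity argument for $\G_\geq$ (``enumerate $\{i\leq m:\varphi_i=p\}$ and return its minimum'') cannot be executed, since $\varphi_i=p$ can be refuted but never confirmed from a finite prefix of $p$, so the procedure never knows when to commit; the easy oracle argument instead uses $\emptyset''$ to determine the total indices $\leq m$, their equality classes and mutual disagreement points, and commits once $p$ has been read past all of them (the paper sharpens this to $\emptyset'$ via amalgamation in Proposition~\ref{prop:Ggeq-halting}). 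With these repairs your overall chain would go through, but as proposed the key reductions $\C_\IN\leqSWS\Kol$ and $\B\leqSWS\Kol_\geq$ do not.
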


The statement is illustrated in the diagram in Figure~\ref{fig:Godel-topological} using our benchmark problems $\K_\IN$ and $\C_\IN$.
We omit the simple proof, as we soon prove a stronger result.

The equivalences $\Kol_\geq\equiv_{\rm W}^*\G$ and $\G_\geq\equiv_{\rm W}^*\id$ 
can be seen as formal versions of the slogan of Hoyrup and Rojas.

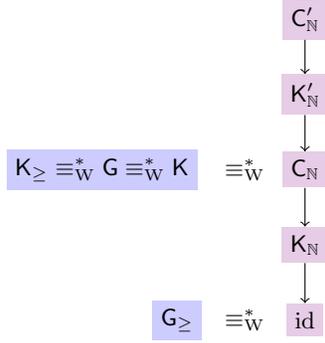
\begin{figure}[htb]
\begin{center}
\begin{tikzpicture}[scale=1,auto=left]
\node[style={fill=violet!20}]  (CNP) at (0,6) {$\C_\IN'$};
\node[style={fill=violet!20}]  (KNP) at (0,5) {$\K_\IN'$};
\node[style={fill=violet!20}]  (CN) at (0,4) {$\C_\IN$};
\node[style={fill=violet!20}]  (KN) at (0,3) {$\K_\IN$};
\node[style={fill=violet!20}]  (ID) at (0,2) {$\id$};
\node[style={fill=blue!20}]  (GK) at (-1.7,2) {$\G_\geq$};
\node[style={fill=blue!20}]  (KG) at (-2.7,4) {$\Kol_\geq\equiv_{\rm W}^*\G\equiv_{\rm W}^*\K$};
\node  (E1) at (-0.8,4) {$\equiv_{\rm W}^*$};
\node  (E2) at (-0.8,2) {$\equiv_{\rm W}^*$};
\draw[->] (CNP) edge (KNP);
\draw[->] (KNP) edge (CN);
\draw[->] (CN) edge (KN);
\draw[->] (KN) edge (ID);
\end{tikzpicture}
\caption{The G\"odel problem in the topological Weihrauch lattice.}
\label{fig:Godel-topological}
\end{center}
\end{figure}

As continuity is computability with respect to some oracle, the question appears whether among
the jumps $\emptyset^{(n)}$ there is a simplest one in the place of $*$
that makes the statements of Theorem~\ref{thm:topological} correct.
We recall that the set $\mathsf{TOT}:=\{i\in\IN:\varphi_i$ total$\}$ is a $\mathrm{\Pi^0_2}$--complete set in the arithmetical
hierarchy~\cite[Proposition~X.9.6]{Odi99} and hence $\mathsf{TOT}\leqT\emptyset''$. 
Hence it follows easily that $\emptyset''$ is sufficient in place of $*$, as totality of the functions
represented by G\"odel numbers is a useful property that can be utilized to provide a simple proof of
Theorem~\ref{thm:topological}.

Somewhat surprisingly and with a little more effort, we can show that actually the halting problem $\emptyset'$ in the place of $*$
also validates Theorem~\ref{thm:topological}.
We start with proving the following equivalences.
The upper bound is easy to obtain and for the lower bound the essential idea is to use (a variant) of the set of random natural numbers.

\begin{theorem}[With the halting problem as oracle]
\label{thm:halting}
$\B\equiv_{\rm sW}^{\emptyset'}\Kol_\geq\equiv_{\rm W}^{\emptyset'}\G\equiv_{\rm sW}^{\emptyset'}\K\equiv_{\rm sW}^{\emptyset'}\C_\IN$.
\end{theorem}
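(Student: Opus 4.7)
The plan is to establish each link in the chain $\B \equiv_{\rm sW}^{\emptyset'} \Kol_\geq \equiv_{\rm W}^{\emptyset'} \G \equiv_{\rm sW}^{\emptyset'} \Kol \equiv_{\rm sW}^{\emptyset'} \C_\IN$ separately. Most of the upper bounds are routine: $\G \leqSW \Kol$ and $\Kol_\geq \leqSW \G$ come directly from Lemma~\ref{lem:basic-problems}, and $\B \leqSW \C_\IN$ is obtained by letting $K(b)$ enumerate $\{0,1,\dots,\max_{k \leq n} b(k)\}$ at stage $n$, so that $\C_\IN(K(b))$ is necessarily an integer exceeding $\sup b$. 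The only nontrivial upper bound is $\Kol \leq_{\rm sW}^{\emptyset'} \C_\IN$: given a computable sequence $r$ as oracle, the oracle $\emptyset'$ makes ``$\varphi_j(n)\!\downarrow$'' decidable and lets one read off the value, so together with $r$ the set $\{j : \varphi_j \neq r\}$ becomes c.e.\ in $r \oplus \emptyset'$; enumerating this set as $q_r$ yields $\inf(q_r) = \Kol(r)$, and $\inf \equivSW \C_\IN$ from Proposition~\ref{prop:lim-inf} closes the reduction.

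For the lower bounds, following the hint I will use the set $R := \{i : \varphi_i \text{ total and } \varphi_j \neq \varphi_i \text{ for every } j < i\}$ of canonical (``random'') indices, for which $i \in R$ implies $\Kol(\varphi_i) = i$. Although $R$ is $\Pi_2$, its membership relation is $\Delta_2^0(\emptyset')$ and hence $R$ is limit-computable from $\emptyset'$. Given $p \in \dom(\C_\IN)$, I will use $p$ together with $\emptyset'$ to produce a stage-wise approximation $(\alpha_s)$ stabilizing at some $\alpha_\infty \in R \setminus \range(p)$, working with a suitably padded variant of $R$ to guarantee such $\alpha_\infty$ exists even when $p$ adversarially attempts to exhaust the initial segment of canonical indices. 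Applying the amalgamation lemma from learning theory to the candidate programs $(\varphi_{\alpha_s})$ then produces a classically computable sequence $r_p = \varphi_{\alpha_\infty}$, for which $\Kol(r_p) = \alpha_\infty \in \C_\IN(p)$; taking $H$ the identity completes $\C_\IN \leq_{\rm sW}^{\emptyset'} \Kol$. The remaining lower bounds in the chain follow via Lemma~\ref{lem:basic-problems}, and an analogous construction --- this time choosing $\alpha_\infty$ to exceed every $b(n)$ via the $\emptyset'$-driven amalgamation reading $b$ in the limit --- yields $\B \leq_{\rm sW}^{\emptyset'} \Kol_\geq$. For the middle segment $\Kol_\geq \equiv_{\rm W}^{\emptyset'} \G$, the reverse direction $\G \leq_{\rm W}^{\emptyset'} \Kol_\geq$ lets the post-processor, which in $\leq_{\rm W}$ has direct access to $p$, run the upper-bound routine to compute a bound $m \geq \Kol(p)$ and invoke $\Kol_\geq(p,m) = \G(p)$; this necessity of accessing $p$ is precisely why the equivalence here is only $\equiv_{\rm W}$ rather than $\equiv_{\rm sW}$.

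The main obstacle will be executing the amalgamation construction carefully. One must simultaneously ensure that $r_p$ is genuinely classically computable (rather than merely $(p \oplus \emptyset')$-computable), which is exactly what the learning-theoretic amalgamation lemma delivers by outputting a single total program in the limit; that the choice $\alpha_\infty \in R$ avoiding $\range(p)$ can be made uniformly over all admissible $p$, which is the role of the padded variant of $R$; and that the bound $m_p$ accompanying $r_p$ in the reduction to $\Kol_\geq$ is produced in the standard representation of $\IN$ used by $\Kol_\geq$, a delicacy most easily side-stepped by first routing through $\Kol$ (where no bound is required) and then inheriting the $\Kol_\geq$-reduction via Lemma~\ref{lem:basic-problems}.
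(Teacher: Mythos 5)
Your upper bounds are fine ($\Kol\leq_{\rm sW}^{\emptyset'}\inf\equivSW\C_\IN$ is essentially the paper's reduction $\Kol\leq_{\rm sW}^{\emptyset'}\lim_\IN$), but the core lower bound of your plan, $\C_\IN\leq_{\rm sW}^{\emptyset'}\Kol$ with $H=\id$ via minimal indices, cannot work. If $j$ is any index that is not the minimal index of a total computable function (a padded index, or an index of a nowhere defined function), then no computable sequence $r$ has $\Kol(r)=j$; taking a computable $p$ with $\range(p)=\IN\setminus\{j\}$ gives $\C_\IN(p)=\{j\}$, so no choice of $r_p$ can achieve $\Kol(r_p)\in\C_\IN(p)$. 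A ``padded variant of $R$'' cannot repair this, since padding destroys exactly the property $i\in R\Rightarrow\Kol(\varphi_i)=i$ that you rely on. Moreover, the preprocessor must emit the values of $r_p$ irrevocably while your approximation $\alpha_s$ is still moving (your $R$ is $\Pi^0_2$, hence only limit-decidable from $\emptyset'$), so the final sequence is a patchwork that need not equal $\varphi_{\alpha_\infty}$ and whose minimal index you do not control; the amalgamation lemma merges finitely many compatible programs into one program, it neither makes an already emitted prefix have a prescribed minimal index nor by itself guarantees that the emitted sequence is computable. The paper avoids all of this by reducing $\lim_\IN\equivSW\C_\IN$ to $\G$ (not to $\Kol$) with the nontrivial post-processor $i\mapsto\varphi_i(i)$, planting values from the co-c.e.\ set $R=\{\langle k,n\rangle:\min\{i:\varphi_i(k)=n\}\geq n\}$: once $p(k)=n$ with $\langle k,n\rangle\in R$ is written, every index of every total extension is at least $n$ --- a positional, prefix-stable guarantee your minimal-index set does not provide. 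Note also that even a correct $\C_\IN\leq_{\rm sW}^{\emptyset'}\Kol$ would not yield the claimed $\G\equiv_{\rm sW}^{\emptyset'}\K\equiv_{\rm sW}^{\emptyset'}\C_\IN$: since $\G\leqSW\Kol$, your only lower bound on $\G$ would be $\Kol_\geq$, i.e.\ essentially $\B$, and $\C_\IN\nleqSWS\B$ holds even for continuous reductions, so a direct strong reduction of $\C_\IN$ (or $\lim_\IN$) to $\G$ is indispensable and missing from your plan.

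Your treatment of the middle link is also broken: you conflate $\Kol_\geq$ (input $p$, output some bound $m\geq\Kol(p)$) with $\G_\geq$ (input a pair $(p,m)$, output an index), and you let the post-processor ``compute a bound $m\geq\Kol(p)$ and invoke $\Kol_\geq(p,m)$''. In a Weihrauch reduction the post-processor cannot call the oracle on data it computes itself, and producing such a bound in finitely many steps from $p\oplus\emptyset'$ is impossible in any case: that task is exactly $\Kol_\geq$, which is discontinuous (it is $\equiv_{\rm W}^{*}\C_\IN$). A correct argument either derives $\Kol_\geq\equiv_{\rm W}^{\emptyset'}\G$ as the paper does, from $\B\equivW\C_\IN$ together with the two strong equivalences, or uses the bound returned by the oracle $\Kol_\geq$ together with the fact that $\G_\geq$ is computable relative to $\emptyset'$ (Proposition~\ref{prop:Ggeq-halting}, which is where amalgamation genuinely enters). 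Finally, the proposed ``side-step'' of routing the reduction $\B\leq_{\rm sW}^{\emptyset'}\Kol_\geq$ through $\Kol$ is backwards: by Lemma~\ref{lem:basic-problems} we have $\Kol_\geq\leqSW\Kol$, so a reduction to the stronger problem $\Kol$ does not yield one to $\Kol_\geq$; and the direction $\Kol_\geq\leq_{\rm sW}^{\emptyset'}\B$, needed for $\B\equiv_{\rm sW}^{\emptyset'}\Kol_\geq$, is not addressed at all.
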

\begin{proof}
We use the fact that $\C_\IN\equivSW\lim_\IN\equivW\B$ according to Proposition~\ref{prop:lim-inf}.
By Lemma~\ref{lem:basic-problems} we have $\Kol_\geq\leqSW\G\leqSW\K$.

We can prove $\Kol\leq_{\mathrm{sW}}^{\emptyset'}\lim_\IN$ as follows.
Given a computable sequence $p\in\IN^\IN$,
we go through all G\"odel numbers $i=0,1,2,...$ one by one. In case of $i$  we check for each $n=0,1,2,...$ whether
$n\in\dom(\varphi_i)$ (with the help of the halting problem) and whether $\varphi_i(n)=p(n)$. If so, then we write $i$ to the output $q$ and we move on to the next $n$.
If one of these tests fails, then we move on to the next $i$. This procedure stops going to the next $i$ when the smallest $i$ with
$\varphi_i=p$ is reached. From this point on only $i$ will be written to the output, i.e., $q$ is eventually constant with value $i$.
Hence $\lim_\IN(q)=i$, as desired. 
The same construction also proves $\K_\geq\leq_{\mathrm{sW}}^{\emptyset'}\B$.

For the other direction we use the following slight generalization of the set of {\em random natural numbers}:
\[R:=\{\langle k,n\rangle\in\IN:\min\{i\in\IN:\varphi_i(k)=n\}\geq n\}.\]
For every fixed $k\in\IN$ there are infinitely many $n\in\IN$ such that $\langle k,n\rangle\in R$. 
For $k=0$ this is Kolmogorov's result that there are infinitely many random natural numbers~\cite[Proposition~III.2.12]{Odi89}.
For each other fixed $k$ this can be proved analogously.
Moreover, $R$ is easily seen to be co-c.e.\ and hence $R\leqT\emptyset'$. 

We prove $\B\leq_\mathrm{sW}^{\emptyset'}\Kol_\geq$.
Given a bounded sequence $q\in\IN^\IN$ we can assume that this is monotone increasing.
The goal is to find a computable $p\in\IN^\IN$ such that every $b\in\Kol_\geq(p)$ is an upper bound of $q$.
Hence, with the help of the halting problem we start to search some $n\geq q(0)$ such that $\langle 0,n\rangle\in R$. 
We then set $p(0):=n$ and we continue inductively.
Given $p(0),...,p(k)$ for some $k\in\IN$, we check whether $q(k+1)>q(k)$.
If so, then we search for some $n\geq q(k+1)$ with $\langle k+1,n\rangle\in R$ and we set $p(k+1)=n$.
Otherwise, we choose $p(k+1)=p(k)$. Since the sequence $q$ is bounded and monotone, it is eventually constant
and hence $p$ is also eventually constant and thus computable.
By construction we obtain that every $b\in\Kol_\geq(p)$ is an upper bound of $q$.

For the proof of $\lim_\IN\leq_\mathrm{sW}^{\emptyset'}\G$ we refine this idea somewhat.
We are given a converging sequence $q\in\IN^\IN$.
The goal is now to find a computable $p\in\IN^\IN$ such that $\varphi_i(i)=\lim q$ for every $i\in\G(p)$.
We start with searching an $n>0$ with $\langle 0,n\rangle\in R$ and we set
$p(0):=n$ and $p(1):=q(0)$. 
We continue inductively.
Given $p(0),...,p(2k+1)$ for some $k\in\IN$, we check whether $q(k+1)\not=q(k)$.
If so, then we search for some $n>2k+2$  with $\langle 2k+2,n\rangle\in R$  
and we set $p(2k+2):=n$ and $p(2k+3):=q(k+1)$.
Otherwise, we choose $p(2k+2):=p(2k+3):=q(k+1)$. 
Since $q$ is convergent, the sequence $p$ converges too and has the same limit as $q$.
By construction, every $i\in G(p)$ is large enough so that $p(i)=\lim p=\lim q$ 
and hence we obtain $\varphi_i(i)=p(i)=\lim q$.
\qed
\end{proof}

A close inspection of the proof reveals that the oracle $p$ has only been used
on the input side, i.e., for the computation of the preprocessing operation $K$ in terms
of Definition~\ref{def:Weihrauch}. 
We denote such a kind of Weihrauch reduction here temporarily by $\leq_\mathrm{sW}^{0,p}$.
In \cite[Theorem~11]{BHK17} we proved $f'\leq_\mathrm{W}^p g'\TO f\leq_\mathrm{W}^{p'}g$.
Using the oracle $p$ only on the input side allows for the following more symmetric result,
which is essentially built upon the limit control technique that was developed in \cite{BHK17a,Bra18}.

\begin{proposition}[Jumps and oracles]
\label{prop:jumps-oracles}
For all problems $f,g$ and $p\in\IN^\IN$ we have:
$f\leq_\mathrm{sW}^{0,p'}g\iff f'\leq_\mathrm{sW}^{0,p}g'$.
\end{proposition}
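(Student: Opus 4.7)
My plan is to prove the two directions separately, in both cases invoking the limit control technique from \cite{BHK17a,Bra18}. The crucial auxiliary observation is a uniform version of the relativized Shoenfield limit lemma, adapted to the jump representation: a function $K:\In\IN^\IN\to\IN^\IN$ is $p'$-computable if and only if there is a $p$-computable $\tilde K:\In\IN^\IN\to\IN^\IN$ such that for every $\lim$-name $r$ of $q\in\dom(K)$, $\tilde K(r)$ is a $\lim$-name of $K(q)$; moreover one recovers $K$ from $\tilde K$ via $K(q)=\lim\tilde K(\widehat q)$, where $\widehat q=\langle q,q,q,\ldots\rangle$ is the trivial $\lim$-name of $q$.

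For the forward direction, I would assume $f\leq_\mathrm{sW}^{0,p'}g$ is witnessed by computable $H$ and $p'$-computable $K$ with $HGK\vdash f$ for every $G\vdash g$. Using the limit control bridge I replace $K$ by the $p$-computable lift $K^*:=\tilde K$ acting on $\lim$-names and keep $H^*:=H$. For any $r$ with $\lim r=q$ a name of some $x\in\dom(f)$ and any $G^*\vdash g'$, $K^*(r)$ is a $\lim$-name of $K(q)\in\dom(g\delta)$, hence $G^*(K^*(r))$ is a name of an element of $g(\delta K(q))$, and $H^*$ transforms it into a name of an element of $f(x)$ exactly as in the unjumped reduction. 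Thus $H^*G^*K^*\vdash f'$, giving $f'\leq_\mathrm{sW}^{0,p}g'$.

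For the backward direction, I would assume $f'\leq_\mathrm{sW}^{0,p}g'$ is witnessed by computable $H^*$ and $p$-computable $K^*$ with $H^*G^*K^*\vdash f'$ for every $G^*\vdash g'$. I set $K(q):=\lim K^*(\widehat q)$ and $H:=H^*$. By the converse direction of the limit control bridge, $K$ is $p'$-computable. To verify $HGK\vdash f$, given any $G\vdash g$ I define $G^*(s):=G(\lim s)$, which is a (in general discontinuous) realizer of $g'$; since the hypothesis $H^*G^*K^*\vdash f'$ holds for every such realizer, applying it with this $G^*$ to the valid input $\widehat q\in\dom(f'\delta')$ yields $H(G(K(q)))=H^*G^*K^*(\widehat q)$ as a name of an element of $f(x)$, as required.

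The main obstacle is the backward direction: a naive analysis only shows that $\lim K^*(\widehat q)$ is $(p\oplus q)'$-computable in $q$, which is in general strictly stronger than the $p'\oplus q$-computability needed for $K$ to be $p'$-computable as a functional. The limit control technique of \cite{BHK17a,Bra18} resolves this subtlety by exploiting that the oracle $p$ enters only on the input side, so that the Shoenfield representation of the limit can be absorbed uniformly into the jump of $p$ alone. This input-side restriction, namely the use of $\leq_\mathrm{sW}^{0,p}$ rather than the full $\leq_\mathrm{sW}^p$, is precisely what makes the symmetric equivalence hold.
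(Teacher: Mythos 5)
Your forward direction is fine and is essentially the paper's argument (replace $K$ by a $p$-computable $K_0$ with $K\circ\lim=\lim\circ K_0$ and compose). The backward direction, however, has a genuine gap. You fix the canonical name $\widehat q=\langle q,q,\ldots\rangle$, define $K(q):=\lim K^*(\widehat q)$, and assert that $K$ is $p'$-computable ``by the converse direction of the limit control bridge.'' That converse (which is itself the nontrivial part) requires that $\lim K^*(r)$ be one and the same value for \emph{every} $\lim$-name $r$ of $q$; a witness $K^*$ of $f'\leq_\mathrm{sW}^{0,p}g'$ gives no such consistency, since different names of $q$ may be sent to $\lim$-names of different admissible $g$-inputs, so there is in general no single $K$ with $K\circ\lim=\lim\circ K^*$ and the hypothesis of your bridge is simply unavailable. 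Worse, the specific function $q\mapsto\lim K^*(\widehat q)$ need not be $p'$-computable at all --- it need not even be continuous. For instance, take $f=\id$ and $g\langle x,y\rangle:=\{x\}$; a witness $K^*$ of $f'\leq_\mathrm{sW}^{0,p}g'$ may, whenever its input looks like $\widehat q$, pass $q$ through in the first component and write stagewise approximations to the jump of $q$ into the second component (reverting to a default as soon as two columns disagree). This $K^*$ is computable and still witnesses the reduction, but $\lim K^*(\widehat q)$ encodes $q'$, so your $K$ is discontinuous in $q$ and not computable relative to any fixed oracle.

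What the limit control theorem (Theorem~3.7 of \cite{Bra18}, as invoked in the paper) actually delivers is weaker and of a different shape: a $p'$-computable \emph{selection} $K_0\sqsubseteq\lim\circ K^*\circ\lim^{-1}$, i.e.\ it chooses, adaptively using $p'$, \emph{some} $\lim$-name $r$ of $q$ on which the limit of $K^*$ can be tracked --- exactly the freedom your fixed choice $\widehat q$ throws away. One then verifies $H^*\circ g\circ K_0\sqsubseteq f$ using that the assumed reduction is correct on every name and for every realizer of $g'$, together with surjectivity of $\lim$. Your closing remark attributes to the technique the statement that the limit along the canonical name can be ``absorbed into the jump of $p$ alone,'' which is precisely what it does not (and cannot) show; the backward direction must be rewritten along the selection argument just described.
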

\begin{proof}
Without loss of generality, we can assume that $f,g$ are of type $f,g:\In\IN^\IN\mto\IN^\IN$
and hence $f'\equivSW f\circ\lim$ and $g'\equivSW g\circ\lim$.

Let $f\leq_{\mathrm{sW}}^{0,p'}g$ hold via
functions $H,K:\In\IN^\IN\to\IN$ such that $H$ is computable and $K$ is computable
relative to $p'$. 
Then $H\circ g\circ K\prefix f$. 
Since $K$ is computable relative to $p'$, the function $K\circ\lim$ is limit computable relative to $p$
and hence there is a function $K_0:\In\IN^\IN\to\IN^\IN$ that is computable relative to $p$ by \cite[Theorem~2.18]{Bra18}
such that $K\circ\lim=\lim\circ K_0$. 
We obtain
\[H\circ g\circ\lim\circ K_0=H\circ g\circ K\circ\lim\prefix f\circ\lim,\]
which implies $f'\equivSW f\circ\lim\leq_\mathrm{sW}^{0,p} g\circ\lim\equivSW g'$.

Let now $f\circ\lim\equivSW f'\leq_\mathrm{sW}^{0,p}g'\equivSW g\circ\lim$ hold via
functions $H,K:\In\IN^\IN\to\IN$ such that $H$ is computable and $K$ is computable
relative to $p$. 
Then $H\circ g\circ\lim\circ K\prefix f\circ\lim$.
By \cite[Theorem~3.7]{Bra18} there is a $K_0:\In\IN^\IN\to\IN^\IN$ that is computable relative to $p'$ such
that $K_0\prefix\lim\circ K\circ\lim^{-1}$ and hence, as $\lim$ is surjective,
\[H\circ g\circ K_0\prefix H\circ g\circ\lim\circ K\circ\lim\nolimits^{-1}\prefix f\]
follows, which implies $f\leq_\mathrm{sW}^{0,p'}g$.
\qed
\end{proof}

With Proposition~\ref{prop:jumps-oracles} we obtain the following corollary of Theorem~\ref{thm:halting}.

\begin{corollary}[Jumps]
\label{cor:jumps}
$\B^{(n)}\equivSW\Kol_\geq^{(n)}$ and $\G^{(n)}\equivSW\K^{(n)}\equivSW\C_\IN^{(n)}$ for $n\geq1$.
\end{corollary}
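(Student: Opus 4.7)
The plan is to combine Theorem~\ref{thm:halting} with Proposition~\ref{prop:jumps-oracles}, leveraging the remark preceding the proposition: each strong reduction assembled in the proof of Theorem~\ref{thm:halting} uses the oracle $\emptyset'$ exclusively inside the preprocessing function $K$, so it is in fact of type $\leq_\mathrm{sW}^{0,\emptyset'}$.

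First I would walk through the four sub-reductions in the proof of Theorem~\ref{thm:halting} and confirm this oracle-location claim: in each case the oracle $\emptyset'$ is consumed either to enumerate the co-c.e.\ set $R$ or to test totality of $\varphi_i$, and both computations are carried out while building the input of the target problem. The post-processing functions are either the identity (when the target returns a number directly via $\lim_\IN$, $\Kol_\geq$, or $\B$) or the map $i\mapsto\varphi_i(i)$ (for $\lim_\IN\leq_\mathrm{sW}^{\emptyset'}\G$), which is partial computable and converges on the indices produced by $\G$ by construction. Combining these input-side reductions with the unconditional strong reductions from Lemma~\ref{lem:basic-problems} yields three strong chain equivalences
\[\B\equiv_\mathrm{sW}^{0,\emptyset'}\Kol_\geq,\quad\G\equiv_\mathrm{sW}^{0,\emptyset'}\K,\quad\K\equiv_\mathrm{sW}^{0,\emptyset'}\C_\IN.\]

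Next I specialize Proposition~\ref{prop:jumps-oracles} to $p=\emptyset$: the right-hand side $\leq_\mathrm{sW}^{0,\emptyset}$ collapses to ordinary strong Weihrauch reducibility, so we have the biimplication $f\leq_\mathrm{sW}^{0,\emptyset'}g\iff f'\leqSW g'$. Applying this to each of the three identities above produces $\B'\equivSW\Kol_\geq'$, $\G'\equivSW\K'$ and $\K'\equivSW\C_\IN'$, which settles the corollary for $n=1$.

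For $n\geq 2$ I would iterate the monotonicity of the jump under $\equivSW$ (the property already invoked in the proof of Proposition~\ref{prop:lim-inf} for $\C_\IN'\equivSW\inf'$), applying it $n-1$ further times to the base-case equivalences just obtained. The main obstacle is really only Step~1: one must verify that no oracle access sneaks into the post-processing $H$, which is slightly delicate for the reduction $\lim_\IN\leq_\mathrm{sW}^{\emptyset'}\G$ because the natural post-processing $i\mapsto\varphi_i(i)$ looks oracle-flavoured, yet it is in fact a plain partial computable evaluation once totality has been secured during preprocessing.
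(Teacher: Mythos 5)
Your proposal is correct and takes essentially the same route as the paper: one observes that every $\emptyset'$-reduction built in the proof of Theorem~\ref{thm:halting} uses the oracle only in the preprocessing $K$ (the postprocessings being computable, e.g.\ $i\mapsto\varphi_i(i)$), then applies Proposition~\ref{prop:jumps-oracles} with a computable oracle $p$ to get the case $n=1$, and iterates monotonicity of the jump under $\equivSW$ for $n\geq 2$. Only a cosmetic remark: in the reduction $\Kol\leq_{\mathrm{sW}}^{\emptyset'}\lim_\IN$ the oracle answers the $\Sigma^0_1$-questions ``$n\in\dom(\varphi_i)$'' rather than testing totality (which would require $\emptyset''$); this does not affect your argument, since all that matters is that the oracle occurs on the input side.
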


The idea of the next proof is inspired by the 
{\em amalgamation technique} that is attributed by~\cite{ZZ08} to Wiehagen~\cite{Wie78}.

\begin{proposition}
\label{prop:Ggeq-halting}
$\G_\geq$ is computable relative to the halting problem.
\end{proposition}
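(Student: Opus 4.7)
The plan is to apply Wiehagen's amalgamation technique from algorithmic learning theory: given $(p,m)$ with $\Kol(p)\leq m$, I would construct an index $e$ with $\varphi_e=p$ by first setting up a hypothesis sequence that stabilises to a correct index using $p$ and $\emptyset'$, and then folding that sequence into a single program via the s-m-n theorem.

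For the hypothesis sequence, observe that the set $B=\{j\leq m:\varphi_j\neq p\}$ is computably enumerable in $p\oplus\emptyset'$: a witness for $\varphi_j\neq p$ is a $k$ such that either $\varphi_j(k)\uparrow$ (decidable from $\emptyset'$) or $\varphi_j(k)\downarrow\neq p(k)$ (decidable from $p$). Let $(B_s)_{s\in\IN}$ be an effective enumeration of $B$ and set $h(s):=\min(\{0,\dots,m\}\setminus B_s)$. Since $\Kol(p)\leq m$, there is some $i^*\leq m$ with $\varphi_{i^*}=p$, so $h(s)$ is well defined and eventually constant with value $i^*=\Kol(p)$.

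Next, I would apply the s-m-n theorem, relativised to the $p\oplus\emptyset'$-computable function $h$, to the partial computable amalgamation $\Phi(h,m,n)$ that searches for the least $s\geq n$ with $h(s)=h(s+1)=\cdots=h(s+n)$ and $\varphi_{h(s),s}(n)\downarrow$, and returns $\varphi_{h(s),s}(n)$. This yields an index $e$ with $\varphi_e(n)=\Phi(h,m,n)$. Once $h$ has stabilised to $i^*$, every $n$ admits a large enough $s$ satisfying the stability window, and since $\varphi_{i^*}=p$ is total the value $\varphi_{i^*,s}(n)=p(n)$ eventually appears; hence $\varphi_e=p$, as required.

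The main obstacle is that $e$ must be produced as a single natural number via a genuine $\emptyset'$-computation from $(p,m)$ and not merely in the limit. Amalgamation is what makes this work: the program $\varphi_e$ does the waiting for stabilisation internally, so $e$ itself is obtained by one application of the s-m-n construction to a description of $h$ that is uniformly computable in $(p,m)$ with oracle $\emptyset'$. Crucially, detecting the stabilisation of $h$---which is a genuinely $\Pi^0_2$ question---is never needed; this is precisely the gap between $\G_\geq$ being continuous (as in Theorem~\ref{thm:topological}) and being $\emptyset'$-computable, and it is exactly what the amalgamation bridges.
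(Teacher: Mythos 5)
There is a genuine gap at the point where you convert the hypothesis sequence into an output index. Your first step is fine: $B=\{j\leq m:\varphi_j\neq p\}$ is indeed c.e.\ in $p\oplus\emptyset'$, and $h(s)=\min(\{0,\dots,m\}\setminus B_s)$ stabilises to $\Kol(p)$; this is essentially a learning-in-the-limit argument relative to $\emptyset'$ (it reproves $\Kol\leq_{\mathrm{sW}}^{\emptyset'}\lim_\IN$). But the problem $\G_\geq$ requires you to output a Gödel number $e$ in the \emph{unrelativised} numbering, i.e.\ with $\varphi_e=p$ for a plain Turing machine. Applying the s-m-n theorem ``relativised to the $p\oplus\emptyset'$-computable function $h$'' only yields an index of an oracle machine computing $\Phi(h,m,\cdot)$ from the oracle $p\oplus\emptyset'$; it does not yield a plain index. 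The program $\varphi_e$ you describe must evaluate $h(s)$ for unboundedly many $s$ in order to find the stability window, and computing $h$ requires deciding questions $\varphi_j(k)\!\uparrow$ (which needs $\emptyset'$) and comparing values $\varphi_j(k)$ with $p(k)$ (which needs access to $p$, i.e.\ exactly an index for $p$ --- the object you are trying to produce). Neither resource is available to a plain program, and there is no finite ``description of $h$'' that a plain program could use, so the claim $\varphi_e=p$ is not established. In effect your construction either outputs $\lim_s h(s)$, which costs another jump ($\emptyset''$, the totality/$\Pi^0_2$ obstacle), or outputs an oracle-machine index, which is not what $\G_\geq$ asks for.

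The paper's amalgamation avoids precisely this trap by making the object handed to s-m-n a \emph{finite} one: using $\emptyset'$ one computes the finite ``pockets'' $P_i=\{j\leq m:\varphi_i\approx\varphi_j\}$ of pairwise compatible indices, then uses finitely much of $p$ (a terminating search for disagreement witnesses) to isolate the unique surviving pocket $P$, which contains an index of $p$ and consists only of functions compatible with $p$. The amalgamated program with the finite set $P$ hard-coded needs no oracle at all: on input $n$ it dovetails the computations $\varphi_j(n)$ for $j\in P$ and outputs the first value found, which is correct by compatibility and defined by totality of the true index in $P$. All the $\emptyset'$- and $p$-dependent work happens outside the program, in the reduction, and terminates. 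To repair your argument you would have to replace the time-indexed hypothesis stream $h$ by such a finite, oracle-free amalgamation base, which is exactly the pocket construction.
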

\begin{proof}
We want to prove that given an input $\langle p,m\rangle\in\dom(\G_\geq)$ we can
compute a G\"odel code $i$ with $\varphi_i=p$ with the help of the halting problem $\emptyset'$.

We consider the following relation $\approx$ defined on the set $\PP$ of partial computable
functions $f,g:\In\IN\to\IN$:
\[f\approx g:\iff(\forall n\in\dom(f)\cap\dom(g))\;f(n)=g(n).\]
In this case, we say that $f$ and $g$ are {\em compatible}.
We emphasize that $\approx$ is reflexive and symmetric, but not transitive and hence $\approx$ is not
an equivalence relation.\footnote{This can be seen, as for instance the nowhere defined function is compatible
with any other partial computable function, but not all partial computable functions are compatible with each other.}
The set $C:=\{\langle i,j\rangle\in\IN:\varphi_i\approx\varphi_j\}$ of codes of compatible functions
is easily seen to be co-c.e.\ and hence $C\leqT\emptyset'$.
We now consider the following sets for $i\leq m$ that we call {\em pockets}:
\[P_i:=\{j\leq m:\varphi_i\approx\varphi_j\}.\]
That is, we collect all codes $j\leq m$ of functions compatible to $\varphi_i$ in the pocket $P_i$.
With the help of $C$, we can compute all these pockets as finite sets.
We call a pocket $P_i$ {\em compatible}, if $\varphi_{j_0}\approx\varphi_{j_1}$ holds for all $j_0,j_1\in P_i$. 
We can decide with the help of $C$ whether a pocket $P_i$ is compatible.
Among the pockets $P_0,...,P_m$ we remove all incompatible pockets and all double occurrences of
the same pocket. This yields a list $P_{i_0},...,P_{i_k}$ of pairwise different pockets,
which are all compatible by themselves.

We now claim that no pocket in this list is a subset of another pocket in the list.
For a contradiction we assume without loss of generality that $P_{i_0}\In P_{i_1}$.
Since the pockets are not identical, this means $P_{i_0}\subsetneqq P_{i_1}$.
Then there is some $j\in P_{i_1}\setminus P_{i_0}$. Since $i_0\in P_{i_0}\In P_{i_1}$
and $P_{i_1}$ is compatible, we have that $\varphi_{i_0}\approx\varphi_j$, which implies $j\in P_{i_0}$.
This is a contradiction and proves the claim.

We now claim that among the pockets $P_{i_0},...,P_{i_k}$
\begin{enumerate}
\item exactly one contains
at least one code $j$ with $\varphi_j=p$ and all other codes $j$ in this pocket satisfy $\varphi_j\approx p$,
\item all other pockets contain at least one $j$ with $\varphi_j\not\approx p$.
\end{enumerate}
Since there is some $i\leq m$ with $\varphi_i=p$ by assumption, we can consider the
pocket $P_i$. Since $\varphi_i$ is total, the pocket $P_i$ is compatible and hence it clearly satisfies (1).
Now let $P_{i'}$ be a pocket among $P_{i_0},...,P_{i_k}$ with $P_{i'}\not=P_i$.
Since there are no double occurrences among the pockets, this is the same as saying
that $P_{i'}$ is any other pocket with $i'\in\{i_0,...,i_k\}\setminus\{i\}$. Suppose that all $j\in P_{i'}$ satisfy $\varphi_j\approx p$.
Then $p=\varphi_i$ implies $j\in P_i$ and hence $P_{i'}\In P_i$, which is impossible by the previous claim.
This proves (2).

Let us now say that a pocket $P_i$ is {\em compatible with $p$}, if $p\approx\varphi_j$ for all $j\in P_i$.
Conditions (1) and (2) guarantee that there is exactly one pocket among the $P_{i_0},...,P_{i_k}$ that
is compatible with $p$ and this pocket contains also a G\"odel number of $p$.
We can now search for a prefix of $p$ that suffices to identify all pockets among $P_{i_0},...,P_{i_k}$ that
are incompatible with $p$. For this purpose we just need to find an $n\in\IN$ and $j$ in the corresponding pocket
with $n\in\dom(\varphi_j)$ and $\varphi_j(n)\not=p(n)$. After a finite search we have identified all pockets 
that are incompatible with $p$ and there
remains only one pocket $P_i$ that is then necessarily compatible with $p$. 

From the index $i\in\{i_0,...,i_k\}$ of this pocket we can compute a G\"odel number $r(i)$ of $p$,
i.e., such that $\varphi_{r(i)}=p$ as follows: for each input $n\in\IN$ we search for some 
$j\in P_i$ such that $n\in\dom(\varphi_j)$ and we produce $\varphi_j(n)$ as result.
Such a value $j$ must exist, as $P_i$ contains G\"odel numbers of $p$ and for any
$j\in P_i$ with $n\in\dom(\varphi_j)$ we obtain $\varphi_j(n)=p(n)$ as $P_i$ is compatible with $p$.

Hence, $r(i)\in\G_\geq\langle p,m\rangle$. 
\qed
\end{proof}

We note that $r(i)\leq m$ is not required and might not hold.

\section{The computability-theoretic classification}
\label{sec:computability}

We now want to explore the unrelativized Weihrauch degree of the G\"odel problem and its variants.
It turns out that the situation is significantly different in this case and none of $\G,\Kol$ and $\Kol_\geq$ is
computably Weihrauch equivalent to $\C_\IN$. 
Since no other natural candidates of problems are known that are topologically but not computably
equivalent to $\C_\IN$, the best that we can expect for a classification is to obtain optimal upper bounds 
in terms of the Weihrauch version of the Kirby-Paris hierarchy
\[\K_\IN\lW\C_\IN\lW\K_\IN'\lW\C_\IN'\lW\K_\IN''\lW\C_\IN''\lW...\]
that are
as narrow as possible. We start with some positive results. For the upper bound of $\G_\geq$
we use again the amalgamation technique.

\begin{proposition}[Upper bounds]
\label{prop:upper}
We obtain 
\begin{enumerate}
\item $\Kol_\geq\leqSW\G\leqSW\Kol\leqSW\Kol'\equivSW\C_\IN'$,
\item $\G_\geq\leqSW\LPO^*\lW\C_\IN$.
\end{enumerate}
\end{proposition}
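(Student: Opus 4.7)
The plan is to assemble item~(1) from results already at hand and to put real work only into the upper bound $\G_\geq\leqSW\LPO^*$ in item~(2). For item~(1), the reductions $\Kol_\geq\leqSW\G\leqSW\Kol$ are recorded in Lemma~\ref{lem:basic-problems}, the inclusion $\Kol\leqSW\Kol'$ holds because every problem reduces strongly to its own jump, and the equivalence $\Kol'\equivSW\C_\IN'$ is precisely the case $n=1$ of Corollary~\ref{cor:jumps}. For item~(2), the separation $\LPO^*\lW\C_\IN$ is already part of Proposition~\ref{prop:lim-inf}, so only the upper bound $\G_\geq\leqSW\LPO^*$ remains.

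For this I would employ a parallelized version of the amalgamation argument in the proof of Proposition~\ref{prop:Ggeq-halting}, where a single call to $\LPO^*$ plays the role that the halting problem plays there. The preprocessing $K$ receives $\langle p,m\rangle$ and produces, for each $i\leq m$, a binary sequence $q_i$ that witnesses the $\Pi^0_1$ question \emph{``$p\approx\varphi_i$''}: one dovetails the computation of $\varphi_i$ and sets $q_i(n):=1$ as soon as by stage $n$ some value $\varphi_i(k){\downarrow}$ has been found with $\varphi_i(k)\neq p(k)$, and $q_i(n):=0$ otherwise. Then $q_i=\widehat{0}$ iff $p\approx\varphi_i$, so a single call to $\LPO^*$ on the tuple $(q_0,\dots,q_m)$ delivers the characteristic vector of $A:=\{i\leq m:p\approx\varphi_i\}$.

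The post-processing $H$ reads $A$ off the output (the length of the bit vector determines $m$) and, by the $s$-$m$-$n$ theorem, produces a G\"odel number of the program that on input $n$ dovetails the computations $\varphi_i(n)$ for $i\in A$ and returns the first value that appears. Two observations show that this program computes $p$: (i) any two $\varphi_i,\varphi_j$ with $i,j\in A$ agree wherever both halt, since both agree with the total sequence $p$ there; and (ii) because $m\geq\Kol(p)$, there is at least one $i^*\leq m$ with $\varphi_{i^*}=p$, and then $i^*\in A$, forcing the amalgamation to be total with value $p$ everywhere.

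The step I expect to be the main obstacle is precisely making this into a \emph{strong} Weihrauch reduction: the map $H$ must output a G\"odel number of $p$ while seeing only the finite bit vector returned by $\LPO^*$, never $p$ itself. The amalgamation idea resolves this cleanly, because the description of the amalgamated program depends only on the finite set $A$, while the fact that its outputs actually coincide with $p(n)$ is forced by the compatibility encoded in $A$ together with the totality of the hidden witness $\varphi_{i^*}$.
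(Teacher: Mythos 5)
Your proposal is correct and takes essentially the same route as the paper: item (1) is assembled from Lemma~\ref{lem:basic-problems}, the trivial reduction of a problem to its jump, and Corollary~\ref{cor:jumps}, and for item (2) both arguments determine the compatibility set $A=\{i\leq m:\varphi_i\approx p\}$ with a single $\LPO^*$-level query and then output an amalgamated index via the s-m-n theorem, exactly as in Proposition~\ref{prop:Ggeq-halting}. The only cosmetic difference is that the paper factors the query through $\min\equivSW\LPO^*$ (encoding $A$ as the minimum of a shrinking sequence of set codes), whereas you query $\LPO^{m+1}$ directly with one compatibility test per index $i\leq m$ --- an equally valid, if anything slightly more direct, implementation of the same idea.
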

\begin{proof}
(1) follows from Lemma~\ref{lem:basic-problems} and Corollary~\ref{cor:jumps}.
$\Kol\leqSW\C_\IN'$ could also easily be proved directly using $\C_\IN'\equivSW\lim\inf_\IN$, which holds
according to Proposition~\ref{prop:lim-inf}: one just needs to enumerate
all G\"odel numbers $i$ repeatedly, whenever they validate longer and longer portions of the input. Those numbers that
appear infinitely often are the correct ones.

(2) We use $\LPO^*\equivSW\min$ and we prove $\G_\geq\leqSW\min$.
We are given an input $\langle p,k\rangle$ with a computable $p\in\IN^\IN$ and
$\Kol(p)\leq k$. The goal is to find a G\"odel code $j$ of $p=\varphi_j$.
We use the encoding $d:\In 2^\IN\to\IN$ with $d(A):=\sum_{i\in A}2^{-i}$ for finite sets $A\In\IN$. 
The idea is to determine a shrinking sequence $(A_j)_{j\in\IN}$
of such sets that converges to a set $A$.
As output we produce a sequence $(d(A_j))_{j\in\IN}$ of codes for the
sets $A_j$. We start with $A_0:=\{0,...,k\}$
and we remove $i\leq k$ from $A_j$ whenever we find an $m\in\IN$ such that $\varphi_i(m)$ converges
but $\varphi_i(m)\not=p(m)$. In this situation $A_{j+1}:=A_j\setminus\{i\}$. 
Whenever we do not find a suitable $m$ for some time, then we set $A_{j+1}:=A_j$.
In both cases we write $d(A_j)$ to the output.
It is clear that the limit $A:=\lim_{j\to\infty}A_j$ exists and $d(A)=\min_{j\in\IN}(d(A_j))$.
The set  $A$ consists of all G\"odel codes up to $k$  that belong to potentially partial restrictions of $p$.
Among those, there is also a G\"odel code of the total function $p$ itself, as guaranteed by the bound $k$. 
From $d(A)$ we can compute a G\"odel code $j$ of $p$. The G\"odel code $j$ belongs to the machine
that works as follows: upon input $n\in\IN$, we find the first
among the values in $\varphi_i(n)$ for $i\in A$ that exists and we write the corresponding
value on the output.
\qed
\end{proof}

The result $\G_\geq\leqW\C_\IN$ is essentially contained in~\cite{FW79} and has also been used in \cite{HR17}. 
Our improvement is that we bring the upper bound down to $\LPO^*$.
The diagram in Figure~\ref{fig:Godel} illustrates the situation.

\begin{figure}[htb]
\begin{center}
\begin{tikzpicture}[scale=1,auto=left]
\node[style={fill=violet!20}]  (CNP) at (0,6) {$\G'\equivSW\Kol'\equivSW\C_\IN'$};
\node[style={fill=violet!20}]  (KNP) at (0,5) {$\K_\IN'$};
\node[style={fill=violet!20}]  (CN) at (0,4) {$\C_\IN$};
\node[style={fill=violet!20}]  (LPOS) at (0,3) {$\LPO^*$};
\node[style={fill=violet!20}]  (KN) at (0,2) {$\K_\IN$};
\node[style={fill=violet!20}]  (ID) at (0,0) {$\id$};
\node[style={fill=violet!20}]  (DIS) at (0,1) {$\DIS$};
\node[style={fill=violet!20}]  (LPO) at (-1,2) {$\LPO$};

\node[style={fill=blue!20}]  (GK) at (-2.5,2) {$\G_\geq$};
\node[style={fill=blue!20}]  (KG) at (-4,2) {$\Kol_\geq$};
\node[style={fill=blue!20}]  (G) at (-4,4) {$\G$};
\node[style={fill=blue!20}]  (K) at (-2.5,5) {$\Kol$};

\draw[->] (CNP) edge (KNP);
\draw[->] (KNP) edge (CN);
\draw[->] (CN) edge (LPOS);
\draw[->] (LPOS) edge (KN);
\draw[->] (LPOS) edge (LPO);
\draw[->] (KN) edge (DIS);
\draw[->] (LPO) edge (DIS);
\draw[->] (DIS) edge (ID);

\draw[->] (CNP) edge (K);
\draw[->] (K) edge (G);
\draw[->] (G) edge (KG);
\draw[->] (G) edge (GK);
\draw[->] (LPOS) edge (GK);
\draw[->] (KG) edge (ID);
\draw[->] (GK) edge (ID);
\draw[->] (K) edge (LPO);
\end{tikzpicture}
\caption{The G\"odel problem in the Weihrauch lattice.}
\label{fig:Godel}
\end{center}
\end{figure}
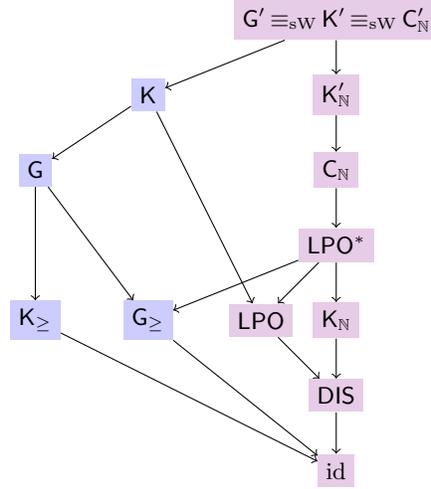

Next we want to show that the upper bounds given in Proposition~\ref{prop:upper} are minimal,
at least with respect to our benchmark problems $\K_\IN\lSW\C_\IN\lSW\K_\IN'\lSW\C_\IN'$
of the Kirby-Paris hierarchy.
We use finite extension constructions for this purpose.

\begin{proposition}[Optimality of upper bounds]
\label{prop:optimal-upper}
We obtain 
\begin{enumerate}
\item $\Kol_\geq\nleqW\K_\IN'$,
\item $\G_\geq\nleqW\K_\IN$.
\end{enumerate}
\end{proposition}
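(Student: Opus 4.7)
The plan in both parts is a finite-extension construction via Kleene's recursion theorem of a ``bad'' computable $\psi$ that defeats any putative reduction.

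For (2), I suppose $\G_\geq \leqW \K_\IN$ via computable $H, K$. By the recursion theorem I pick an index $e$ with $\varphi_e = \psi$ (to be constructed) and set the bound input to $k := e$; then $\Kol(\psi) \leq e$ automatically and $(\psi, e) \in \dom(\G_\geq)$. I construct $\psi$ in three overlapping phases, committing values on-the-fly as the simulations read them: (A) simulate $K(\langle \psi, e\rangle)$, extending $\psi$ by $0$s as $K$ reads, until $K$ writes the bound $m$ (else $K$ already produces an invalid $\K_\IN$-instance and the reduction has failed); (B) for each $a \in \{0,\dots,m\}$ simulate $H(\langle \psi, e, a\rangle)$ to obtain a G\"odel code $j_a$, dovetailing these $m+1$ simulations; (C) extend $\psi$ past the committed prefix $\sigma$ by a finite-extension diagonalization against the $j_a$'s---at each new position $n$, dovetail-simulate $\varphi_{j_a}(n)$ for each $a$ not yet defeated, set $\psi(n) := 1 + \max\{v : \varphi_{j_a}(n)\downarrow v\}$ when at least one converges (or $\psi(n) := 0$ otherwise), and mark all converged $a$'s as defeated. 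Each $a$ with $\varphi_{j_a}$ total is eventually defeated at some position $n$, while each $a$ with $\varphi_{j_a}$ partial satisfies $\varphi_{j_a}\neq\psi$ automatically since $\psi$ is total; hence $\varphi_{j_a}\neq\psi$ for every $a\leq m$. But $K(\langle\psi,e\rangle)$ must be a valid $\K_\IN$-instance, so some $a^* \leq m$ is a valid answer and $j_{a^*} = H(\langle \psi, e, a^*\rangle)$ would have to be a G\"odel code of $\psi$, a contradiction.

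For (1), I suppose $\Kol_\geq \leqW \K_\IN'$ via computable $H, K$ and replace $\K_\IN'$ by $\BWT_\IN$ via Proposition~\ref{prop:lim-inf}. I again construct $\psi = \varphi_e$ by the recursion theorem, but now maintain at each stage $s$ the running maximum $M_s$ of the prefix of $K(\psi)$ produced so far and the value $B_s := \max\{H(\psi, c) : c\leq M_s,\ H\text{ has converged in }s\text{ steps}\}$, which upper-bounds the reduction's outputs on cluster points of $K(\psi)$. Dynamically diagonalize $\psi$ against $\varphi_0, \dots, \varphi_{B_s}$: requirements are added to the active list as $B_s$ grows, and each is discharged by the same finite-extension/dovetailing scheme as in part~(2). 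If $K(\psi)$ turns out to be unbounded, then $K(\psi)$ is not a valid $\BWT_\IN$-instance and the reduction already fails; otherwise $B_\infty := \lim_s B_s$ is finite, the dynamic diagonalization forces $\Kol(\psi) > B_\infty$, and since every cluster point $c$ of $K(\psi)$ satisfies $c \leq \sup K(\psi)$ and hence $H(\psi, c) \leq B_\infty < \Kol(\psi)$, the reduction fails to produce a valid upper bound on $\Kol(\psi)$.

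The main technical obstacle in both constructions is the lazy-commitment bookkeeping: I have to extend $\psi$ fast enough for the simulations of $K$, $H$, and the $\varphi_{j_a}$'s or $\varphi_i$'s to keep making progress, but carefully enough that the dovetailed simulation of each total $\varphi_{j_a}$ (or $\varphi_i$) catches $\varphi_{j_a}(n)\downarrow$ at a still-uncommitted position $n$, so that $\psi(n)$ can be set to disagree. In (1) this is further complicated by the dynamically growing diagonalization list and the separate handling of the unbounded-$K$ case, but both arguments ultimately follow routine finite-extension/priority patterns from computability theory.
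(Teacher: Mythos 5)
Your part (1) strategy has a gap that is structural, not just bookkeeping: the contradiction requires $\Kol(\psi)>B_\infty$, but $B_\infty$ is determined by the adversary's outputs $H(\psi,c)$ and may perfectly well be $\geq e$, your recursion-theorem index of $\psi$; in that case $\Kol(\psi)\leq e\leq B_\infty$ and the inequality you want to ``force'' is simply false. No diagonalization can make $\psi$ differ from an index of $\psi$ itself: an attempted requirement against such an index either stalls an unbounded wait (destroying totality of $\psi$, so $\psi$ is no longer a legitimate instance and no contradiction follows) or silently fails under a bounded budget, and your construction cannot detect which case it is in. The paper's proof of (1) is built to avoid exactly this: it never tries to push $\Kol(p)$ above the reduction's outputs. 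Instead it uses each output $j_k=H\langle p,k_{n_k}\rangle$ (on majority-count guesses $k_{n_k}$ that are genuine cluster points of $K(p)$ for large $k$) as a bound below which a code of the current finite modification $p_k$ must exist, defeats a \emph{fresh} index $i_k\leq j_k$ at every stage, and derives a pure counting contradiction: infinitely many pairwise distinct defeated indices lie below the finitely many limiting values $j_k$. Your approach for (1) therefore needs to be replaced, not merely tightened.

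For part (2) your core idea (diagonalize against the at most $m+1$ values $H$ can output below the $\K_\IN$-bound) is the same as the paper's, but the recursion-theorem implementation leaves the decisive race unresolved. Your claim that ``each $a$ with $\varphi_{j_a}$ total is eventually defeated'' is not justified by budgeted per-position checks: the one index you must defeat, $j_{a^*}$ with $\varphi_{j_{a^*}}=\psi$, may converge at every frontier position later than every budget, so it evades forever and no contradiction is produced. The obvious repair --- reserve a position and wait unboundedly --- breaks totality of $\psi$, because junk candidates $a\leq m$ may have $H$-outputs $j_a$ with $\varphi_{j_a}$ partial, and their exclusion by $K$ may require reading more of $\psi$ than you have committed: a deadlock. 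Movable markers restore totality but let $K$'s read demands burn every reserved position before you see a convergence, again yielding no contradiction. The paper's architecture is designed precisely to dodge this feedback: at each stage it runs $K,H$ on a fully specified eventually-zero input $p_j$ paired with one of its own G\"odel numbers $k_j$ (so the stage-wise wait provably terminates, restarting whenever the finite candidate set shrinks), diagonalizes against \emph{all} surviving candidates, and only afterwards picks the bound $k$ post hoc among the unbounded $k_j$ --- no recursion theorem, no race. Part (2) of your plan could likely be salvaged by adopting that stage-wise scheme, but as written both parts have genuine gaps.
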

\begin{proof}
(1)
Suppose $\Kol_\geq\leqW\K_\IN'$ holds. 
This implies $\Kol_\geq\leqW\BWT_\IN$ by Proposition~\ref{prop:lim-inf}.
Hence there are computable $H,K:\In\IN^\IN\to\IN^\IN$ such that
$H\langle\id,G K\rangle$ is a realizer for $\Kol_\geq$ whenever $G$ is a realizer for $\BWT_\IN$.
We start with a constant zero input $p_0=000...$ and we use machines for $H,K$ 
in order to construct a computable $p\in\IN^\IN$ such that $H\langle p,GK(p)\rangle$ is not an upper bound for a G\"odel number of $p$.
For a contradiction, let us assume the contrary.
Let $k_n$ be the smallest number that is enumerated a maximal number of times in $K(p_0)|_n$.
If $n$ is large enough, then $k_n$ appears infinitely often in $K(p_0)$. 
Let $m_0$ be a computable joint modulus of continuity of $K$ and $H$ in the sense that a prefix of length $m_0(n)$ of $p_0$ suffices to produce an output
$K(p_0)$ of length $n$ and an also an output $j:=H\langle p_0,k_n\rangle$ provided that $k_n$ appears infinitely often in $K(p_0)$.
We can assume that $m$ is strictly monotone increasing. 
Then one of the numbers $i\leq j$ must be a G\"odel number of $p_0$.
Hence, we can search for some pair $n_0,i_0\in\IN$ with $n_0>0$ such that $j_0:=H\langle p_0,k_{n_0}\rangle$
and $\varphi_{i_0}(m_0(n_0))$ exist with $i_0\leq j_0$. If we have found such a pair $n_0,i_0$, then we set
$p_1:=p_0(0)....p_0(m_0(n_0)-1)(1\dmin\varphi_{i_0}(m_0(n_0)))000...$.
We now continue this algorithm inductively with $p_1$ and we analogously search for some $n_1>n_0$ and $i_1\leq j_1$
as before and we set 
$p_2:=p_1(0)....p_1(m_1(n_1)-1)(1\dmin\varphi_{i_1}(m_1(n_1)))000...$.
Here we assume that $m_1\geq m_0$ pointwise.
We can even restrict our search to $i_1$ with $i_1\not=i_0$, as $i_0$ cannot be a G\"odel code of $p_1$ by construction.
This algorithm determines a sequence $(p_k)$ that converges to some computable $p$,
as the algorithm shows how to compute $p$ and we can assume that the corresponding $i_k$ are pairwise different.
Since $p$ is computable, the sequence $K(p)$ must be bounded and there is some length $n$ that suffices that any number
that appears a maximal number of times in $K(p)|_k=K(p_k)|_k$ for $k\geq n$
actually appears infinitely often in $K(p)$. 
Hence, all the $j_k=H\langle p_k,k_{n_k}\rangle=H\langle p,k_{n_k}\rangle$ with $k\geq n$ are upper bounds of G\"odel numbers of $p$.
However, since there are only finitely many different $k_{n_k}$ with $k\geq n$ there can only
be finitely many $j_k$, which is a contradiction to the fact that there are infinitely many pairwise different $i_k\leq j_k$ with $k\geq n$.

(2) Suppose $\G_\geq\leqW\K_\IN$.
Then there are computable functions $H,K:\In\IN^\IN\to\IN^\IN$
such that $H\langle\id,GK\rangle$ is a realizer of $\G_\geq$ whenever $G$ is a realizer of $\K_\IN$.
We construct a computable $p\in\IN^\IN$ together with some bound $k\in\IN$ on its
Kolmogorov complexity such that the reduction does not work for input $\langle p,k\rangle$.
Let $p_0:=\widehat{0}$ and $k_0$ such that $\Kol(p_0)\leq k_0$.
Then $K\langle p_0,k_0\rangle$ is a name of a finite set
$A_0:=\{a_1,...,a_m\}$, given by negative information together with a bound on the largest
element of $A_0$. Then $i_j:=H\langle\langle p_0,k_0\rangle,a_j\rangle$ for $j=1,...,m$
have to be G\"odel numbers of $p$.
We evaluate $K\langle p_0,k_0\rangle$ until we have a candidate $B_0=\{a_1,...,a_m\}$ for $A_0$ (that satisfies $A_0\In B_0$)
and then we evaluate $i_j=H\langle\langle p_0,k_0\rangle,a_j\rangle$ for $j=1,...,m$ under
the assumption that our candidate is correct. 
By continuity of $K,H$ some finite prefix $p|_{n_0}$ of length $n_0\in\IN$ is sufficient to provide some sufficiently good candidate $B_0$ for $A_0$ 
such that we can compute all the corresponding numbers $i_1,...,i_m$.
Now we simultaneously try to evaluate $\varphi_{i_j}(n_0+j)$ for $j=1,...,m$, which might fail for some $i_j$ as the candidate $B_0$ for $A_0$ is still larger than the true $A_0$. 
Hence, simultaneously we continue to find a better candidate $B_0$ for $A_0$ by evaluation of $K\langle p_0,k_0\rangle$.
If, at some stage, we find a smaller candidate $B_0$ for $A_0$ than previously after inspecting a prefix of $p$ longer 
than $n_0$, then we restart the evaluation of
the $\varphi_{i_j}(n_0+j)$ for $j=1,...,m$ with the revised numbers $n_0,m$. 
At some stage our candidate $B_0$ for $A_0$ is by continuity of $K$ small enough such all these values $i_1,...,i_m$ and $\varphi_{i_j}(n_0+j)$ are available and then we set
\[p_1:=0^{n_0+1}(1\dmin\varphi_{i_1}(n_0+1))...(1\dmin\varphi_{i_m}(n_0+m))\widehat{0}\]
and we compute a G\"odel number $k_1$ of $p_1$ that satisfies $k_1>k_0$.
By $w$ we denote the prefix of $p_1$ of length $n_0+1+m$.
Clearly, all the numbers $i_1,...,i_m$ cannot be G\"odel codes for $p_1$. 
We now consider the set $A_1=\{a_1,...,a_m\}$ named by $K(p_1,k_1)$ and
we inductively continue this construction with $\langle p_1,k_1\rangle$ in place of
$\langle p_0,k_0\rangle$, where we ensure that the corresponding number $n_1\in\IN$,
i.e., the length up to which we have to evaluate $p_1$, satisfies $n_1>n_0+1+m$.
In this case we set 
\[p_2:=w0^{n_1-n_0-m}(1\dmin\varphi_{i_1}(n_1+1))...(1\dmin\varphi_{i_m}(n_1+m))\widehat{0}.\]
In this way we obtain a sequence $(p_j)_{j\in\IN}$ that converges to a computable
$p\in\IN^\IN$, since our algorithm shows how to compute the limit. We also obtain an
unbounded sequence $(k_j)_{j\in\IN}$ of G\"odel numbers and a corresponding 
sequence $(A_j)_{j\in\IN}$ of finite sets named by $K\langle p_j,k_j\rangle$, as well as a sequence of finite sets $(B_j)_{j\in\IN}$ with our final candidates $B_j\supseteq A_j$ for the $A_j$.
One of these numbers $k:=k_j$ has to satisfy $\Kol(p)\leq k$ and we now
consider the input $\langle p,k\rangle$. Then $K\langle p,k\rangle$ is a name
for a finite set $A\In B_j$. But all the finitely many numbers
$i\in B:=H\langle\langle p_j,k_j\rangle,B_j\rangle$ are by construction
not G\"odel codes of $p_{j+1}$ and likewise the subset
$H\langle\langle p,k\rangle,A\rangle\In B$ does not contain any G\"odel codes of
$p$. This means that the reduction fails on the computable input $\langle p,k\rangle$
with $\Kol(p)\leq k$.
\qed
\end{proof}

As $^1\WKL^{(n)}\equivW\K_\IN^{(n)}$ and $^1\lim^{(n)}\equivW\C_\IN^{(n)}$ by Theorem~\ref{thm:first-order},
we obtain the following corollary
with respect to $\WKL$ and $\lim$.

\begin{corollary}[Weak K\H{o}nig's lemma and limits]
\label{cor:WKL-lim}
We obtain
\begin{enumerate}
\item $\Kol_\geq\leqSW\G\leqSW\Kol\leqSW\lim'$, but $\Kol_\geq\nleqW\WKL'$,
\item  $\G_\geq\leqSW\lim$, but $\G_\geq\nleqW\WKL$.
\end{enumerate}
\end{corollary}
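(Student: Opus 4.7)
The plan is to derive this corollary entirely from material already established, namely Proposition~\ref{prop:upper}, Proposition~\ref{prop:optimal-upper}, and the first-order characterizations of $\WKL$ and $\lim$ in Theorem~\ref{thm:first-order} (and Corollary~\ref{cor:closed-compact}).

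For the positive reductions, I would just extend the chains in Proposition~\ref{prop:upper}. From that result we already have $\Kol_\geq\leqSW\G\leqSW\Kol\leqSW\C_\IN'$. Since $\C_\IN\leqSW\lim_\IN\leqSW\lim$ (Proposition~\ref{prop:lim-inf}) and since strong Weihrauch reducibility is monotone under the jump operator, this yields $\C_\IN'\leqSW\lim'$, and hence the composite reduction $\Kol\leqSW\lim'$. For the second item, Proposition~\ref{prop:upper}(2) gives $\G_\geq\leqSW\LPO^*$, and since $\LPO^*\leqSW\widehat{\LPO}\equivSW\lim$, we obtain $\G_\geq\leqSW\lim$. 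All of this is routine chasing of reductions.

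The interesting content lies in the two non-reductions, and here the key observation is that both $\Kol_\geq$ and $\G_\geq$ have codomain $\IN$, i.e., they are first-order problems in the sense of Dzhafarov, Solomon, and Yokoyama. By the characterization recalled in Section~\ref{sec:weihrauch}, any first-order problem $h$ with $h\leqW f$ automatically satisfies $h\leqW \,^1f$. Combined with Theorem~\ref{thm:first-order}, which states $^1(\WKL^{(n)})\equivW\K_\IN^{(n)}$, this turns the negative results about $\K_\IN^{(n)}$ into negative results about $\WKL^{(n)}$. Concretely, suppose toward a contradiction that $\Kol_\geq\leqW\WKL'$; then by first-order-ness $\Kol_\geq\leqW\,^1(\WKL')\equivW\K_\IN'$, contradicting Proposition~\ref{prop:optimal-upper}(1). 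An entirely analogous argument, now using $^1\WKL\equivW\K_\IN$ from Corollary~\ref{cor:closed-compact} together with Proposition~\ref{prop:optimal-upper}(2), shows $\G_\geq\nleqW\WKL$.

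There is no real obstacle to overcome: the difficulty was already absorbed into Proposition~\ref{prop:optimal-upper}, where the finite extension constructions did the work, and into Theorem~\ref{thm:first-order}, which isolates the first-order content of $\WKL^{(n)}$. The mildest point worth checking is simply that $\Kol_\geq$ and $\G_\geq$ genuinely have $\IN$ as their codomain (immediate from the definitions in Section~\ref{sec:weihrauch}), so that the universal property of $\,^1(\cdot)$ is applicable.
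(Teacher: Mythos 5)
Your proposal is correct and follows essentially the paper's own route: the positive reductions are obtained by chaining Proposition~\ref{prop:upper} with known (strong) reductions of $\C_\IN^{(n)}$ into $\lim^{(n)}$, and the negative statements are obtained exactly as in the paper, by observing that $\Kol_\geq$ and $\G_\geq$ are first-order problems and invoking $^1(\WKL^{(n)})\equivW\K_\IN^{(n)}$ from Theorem~\ref{thm:first-order} together with Proposition~\ref{prop:optimal-upper}. Your explicit use of jump monotonicity for $\C_\IN'\leqSW\lim'$ and of $\LPO^*\leqSW\widehat{\LPO}\equivSW\lim$ are only minor (and sound) variations in the bookkeeping of the upper bounds.
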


Even more roughly, we can formulate a conclusion regarding the Borel complexity of the G\"odel problem problem and its relatives. We recall that by results of \cite[Section~9]{Bra05}, we have the following.
\pagebreak 

\begin{fact}[Borel measurability]\nopagebreak
\label{fact:Borel}
We obtain for every problem $n\in\IN$
\begin{enumerate}
\item $f\leqW\lim^{(n)}\iff f$ is effectively $\mathbf{\Sigma^0_{n+2}}$--measurable,
\item $f\leqSW\lim^{(n)}\iff f$ is $\mathbf{\Sigma^0_{n+2}}$--measurable.
\end{enumerate}
\end{fact}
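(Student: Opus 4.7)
The plan is to proceed by induction on $n$. The base case $n=0$ rests on the classical Lebesgue--Hausdorff characterisation of $\mathbf{\Sigma^0_2}$--measurable functions on Baire space as pointwise limits of continuous functions, with an effective version for the lightface enumeration. The inductive step uses the structural relation between iterated jumps of $\lim$ and iterated limits of Borel-measurable functions.

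For the base case of (2), suppose $f\leqSW\lim$ via computable $H,K$. Then any realizer $F=H\circ\lim\circ K$ of $f$ is the pointwise limit of the computable maps $F_n:=H\circ\pi_n\circ K$, where $\pi_n(q):=q(n)$; hence $F$ is $\mathbf{\Sigma^0_2}$--measurable. Conversely, given a $\mathbf{\Sigma^0_2}$--measurable realizer $F$ of $f$, Lebesgue--Hausdorff produces a sequence of continuous $F_n$ converging pointwise to $F$; setting $K(p):=\langle F_0(p),F_1(p),\ldots\rangle$ together with $H:=\id$ gives $F=\lim\circ K$, witnessing $f\leqSW\lim$. The analogous argument using \emph{uniformly} computable approximating sequences yields (1) at the base level, where the added flexibility of $\leqW$ to let $H$ consult the original input corresponds precisely to the ``effective'' refinement of the measurability notion.

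For the induction step, I would exploit the equivalence $\lim^{(n+1)}\equivSW\lim\star\lim^{(n)}$, encoding the jump on the input side of $\lim^{(n)}$ as a further outer application of $\lim$, together with the standard fact that a function is $\mathbf{\Sigma^0_{n+3}}$--measurable iff it is a pointwise limit of $\mathbf{\Sigma^0_{n+2}}$--measurable functions. Under this iteration, $f\leqSW\lim^{(n+1)}$ becomes the statement that $f$ is a pointwise limit of functions strongly Weihrauch reducible to $\lim^{(n)}$, which by the inductive hypothesis are exactly the $\mathbf{\Sigma^0_{n+2}}$--measurable functions. Passing the limit through on the input side then produces the single strong Weihrauch reduction to $\lim^{(n+1)}$, and the computable-versus-continuous parameter again carries the ``effective'' qualifier.

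The main obstacle will be the uniformity bookkeeping at the inductive step: one must amalgamate a pointwise-convergent sequence of strong Weihrauch reductions to $\lim^{(n)}$ into a single reduction to $\lim^{(n+1)}$, which requires simultaneously encoding the approximating pre-processors and their outputs into one limit-computable input for $\lim^{(n)}$, and verifying that this amalgamated reduction remains strong rather than slipping into the ordinary Weihrauch regime. Once this amalgamation is executed carefully, the $\leqW$ versus $\leqSW$ distinction tracks exactly the dichotomy between ``effectively $\mathbf{\Sigma^0_{n+2}}$--measurable'' and $\mathbf{\Sigma^0_{n+2}}$--measurable on the right-hand side, and both equivalences follow by routine induction.
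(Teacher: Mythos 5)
The paper does not prove this statement at all: it is quoted as a known result from \cite[Section~9]{Bra05}, so your sketch has to be judged against that source, and while your overall route (limit stages of $\lim$ plus Lebesgue--Hausdorff decompositions iterated through the finite Borel levels) is indeed the standard one, it has genuine gaps. The most important is that you misidentify what separates item (1) from item (2). You attribute the difference to $\leqW$ versus $\leqSW$ (``$H$ may consult the original input''), but $\lim^{(n)}$ is a cylinder, so $f\leqW\lim^{(n)}$ and $f\leqSW\lim^{(n)}$ are the \emph{same} relation and cannot carry the effective/topological distinction; the distinction actually at stake (and in the cited source, and in the way the paper later uses the Fact in Corollary~\ref{cor:Borel}) is computable versus merely continuous reduction functions, i.e.\ item (2) is to be read with $\leq_{\mathrm{sW}}^*$. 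Your own base case shows the problem: the approximations $F_m$ produced by the classical Lebesgue--Hausdorff theorem are only continuous, so $K(p):=\langle F_0(p),F_1(p),\dots\rangle$ is continuous but not computable, and what you obtain is $f\leq_{\mathrm{sW}}^*\lim$, not $f\leqSW\lim$. To get the effective half (item (1)) you need an \emph{effective} Lebesgue--Hausdorff theorem producing a computable sequence of computable approximants from an effectively $\mathbf{\Sigma^0_2}$--measurable realizer; that is precisely the nontrivial content of \cite{Bra05}, which your sketch assumes as a black box.

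The forward directions are also argued incorrectly. $H$ is partial and need only be defined on the limit points $\lim K(p)$, so $H\circ\pi_m\circ K$ need not be defined anywhere relevant, and the identity $H(\lim_m K(p)_m)=\lim_m H(K(p)_m)$ is not available; at the inductive step the analogous exchange would have to push a pointwise limit past the discontinuous map $\lim^{(n)}$ (your phrase ``encoding the jump on the input side as a further outer application of $\lim$'' conflates the inner composition $\lim^{(n)}\circ\lim$ with the outer one), and that simply fails. The forward direction should instead be done via preimages: $F=H\circ\lim^{(n)}\circ K$ composes (effectively) continuous maps with the (effectively) $\mathbf{\Sigma^0_{n+2}}$--measurable map $\lim^{(n)}$, so preimages of open sets are (effectively) $\mathbf{\Sigma^0_{n+2}}$. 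Finally, the amalgamation you defer as ``bookkeeping'' is the real work of the converse inductive step: one must keep the inductive hypothesis uniform and combine the approximating reductions $(H_m,K_m)$ into a single one, e.g.\ using $\widehat{\lim^{(n)}}\equivSW\lim^{(n)}$ and $\lim\star\lim^{(n)}\equivW\lim^{(n+1)}$; your worry about the result ``remaining strong'' is, by contrast, vacuous, again because $\lim^{(n+1)}$ is a cylinder.
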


This leads to the following conclusions.

\begin{corollary}[Borel complexity]
\label{cor:Borel}
We obtain
\begin{enumerate}
\item $\G, \Kol$ and $\Kol_\geq$ are all effectively $\mathbf{\Sigma^0_{3}}$--Borel measurable, and topologically $\mathbf{\Sigma^0_2}$--Borel measurable, but not effectively so.
\item $\G_\geq$ is effectively $\mathbf{\Sigma^0_{2}}$--Borel measurable and continuous, but not computable.
\end{enumerate}
\end{corollary}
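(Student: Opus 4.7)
The plan is to read off the Borel complexity directly from the Weihrauch bounds already established in Corollary~\ref{cor:WKL-lim}, Theorem~\ref{thm:topological} and Proposition~\ref{prop:optimal-upper}, together with Fact~\ref{fact:Borel}. For part~(1), the effective $\SO{3}$ claim is immediate: Corollary~\ref{cor:WKL-lim}(1) gives $\Kol_\geq\leqSW\G\leqSW\Kol\leqSW\lim'$, and Fact~\ref{fact:Borel}(2) with $n=1$ converts this into effective $\SO{3}$--measurability of all three problems. For the topological $\SO{2}$ upper bound, I would invoke Theorem~\ref{thm:topological} to conclude $\Kol_\geq\equiv_{\rm W}^*\G\equiv_{\rm sW}^*\Kol\equiv_{\rm sW}^*\C_\IN$, then combine this with $\C_\IN\equivSW\lim_\IN\leqSW\lim$ (Proposition~\ref{prop:lim-inf}) to obtain $\G,\Kol,\Kol_\geq\leq_{\rm W}^*\lim$, and apply the relativized (topological) form of Fact~\ref{fact:Borel}(1).

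To finish part~(1) it remains to rule out effective $\SO{2}$--measurability, i.e.\ to establish $\Kol_\geq\nleqW\lim$; the failure then propagates to $\G$ and $\Kol$ via Lemma~\ref{lem:basic-problems}. Here I would combine Corollary~\ref{cor:WKL-lim}(1), which yields $\Kol_\geq\nleqW\WKL'$, with the auxiliary inequality $\lim\leqW\WKL'$. The latter is a short construction: given a sequence $(p_n)$ converging to $p\in\IN^\IN$, pass computably to a binary encoding $(\overline{p_n})$ converging to $\overline{p}\in 2^\IN$, and produce the limit-computable sequence of binary trees $T_n$ consisting of all prefixes of $\overline{p_n}$; the limit tree $T$ then has $\{\overline{p}\}$ as its set of paths, and from any realizer of $\WKL'$ on this input one recovers $p$.

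For part~(2), the effective $\SO{2}$ bound is immediate from $\G_\geq\leqSW\lim$ (Corollary~\ref{cor:WKL-lim}(2)) via Fact~\ref{fact:Borel}(2) with $n=0$, and continuity of $\G_\geq$ has already been recorded in Theorem~\ref{thm:topological} (and is also visible from Proposition~\ref{prop:Ggeq-halting} together with Proposition~\ref{prop:jumps-oracles}). Non-computability follows from Proposition~\ref{prop:optimal-upper}(2): if $\G_\geq$ were computable, it would be Weihrauch below every problem that admits a realizer, in particular $\G_\geq\leqW\K_\IN$, contradicting that proposition. The only non-routine step in the whole argument is the short reduction $\lim\leqW\WKL'$ sketched above; all remaining content is a direct dictionary translation between Weihrauch bounds and effective Borel classes via Fact~\ref{fact:Borel}.
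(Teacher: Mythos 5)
Your argument is correct and follows essentially the same route as the paper, which treats the corollary as a direct dictionary translation of the Weihrauch bounds in Theorem~\ref{thm:topological}, Corollary~\ref{cor:WKL-lim} and Proposition~\ref{prop:optimal-upper} via Fact~\ref{fact:Borel}. The one ingredient you make explicit, $\lim\leqW\WKL'$ (needed to pass from $\Kol_\geq\nleqW\WKL'$ to the failure of effective $\mathbf{\Sigma^0_2}$--measurability), is a standard fact that the paper uses implicitly, and your tree construction proving it is correct.
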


In fact, even finer conclusions regarding measurability properties of the G\"odel problem and its relatives 
(compare \cite[Theorem~11.9.3]{BGP21}). But we leave these to the reader.

Finally, we still want to separate $\Kol_\geq$ from $\G_\geq$ and $\G$. By $|_\mathrm{W}$ we denote incomparability with respect to $\leqW$.

\begin{proposition}
\label{prop:separation}
$\G_\geq|_\mathrm{W}\Kol_\geq$ and $\Kol_\geq\lW\G$.
\end{proposition}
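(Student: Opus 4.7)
The proposition bundles three claims. The reduction $\Kol_\geq \leqSW \G$ is already in Lemma~\ref{lem:basic-problems}; the non-reducibility $\Kol_\geq \nleqW \G_\geq$ comes for free, since Proposition~\ref{prop:upper} together with the linear ordering of the Kirby--Paris hierarchy gives $\G_\geq \leqSW \LPO^* \leqW \C_\IN \leqW \K_\IN'$ while Proposition~\ref{prop:optimal-upper}(1) says $\Kol_\geq \nleqW \K_\IN'$, so any putative $\Kol_\geq \leqW \G_\geq$ would compose into a contradiction; and once the main separation $\G_\geq \nleqW \Kol_\geq$ is in place, the strict half of $\Kol_\geq \lW \G$ follows from $\G_\geq \leqSW \G$. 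So the essential new work is $\G_\geq \nleqW \Kol_\geq$.

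For this direction I plan to argue by contradiction: assume the reduction holds via computable $H, K$. The first step is to extract a uniform computable Kolmogorov bound on $K$'s outputs. Because $K$ is a fixed computable operation and any valid input $(p, M)$ to $\G_\geq$ satisfies $\Kol(p) \leq M$, the s-m-n theorem supplies a computable $r\colon \IN^2 \to \IN$ with $\varphi_{r(i,M)} = K(\varphi_i, M)$; taking $R(M) := \max_{i \leq M} r(i, M)$ gives a computable upper bound with $R(M) \geq \Kol(K(p, M))$ valid for every computable $p$ with $\Kol(p) \leq M$. Since $R(M)$ is then a permissible output of $\Kol_\geq$ on $K(p, M)$, the reduction forces the uniformly computable function
\[
g(p, M) := H((p, M), R(M))
\]
to satisfy $\varphi_{g(p, M)} = p$ for every computable $p$ with $\Kol(p) \leq M$.

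The final step is a Kleene-style fixed-point diagonalization against this uniform learner. By computability, $g(\widehat{0}, M)$ queries only a finite initial segment $\widehat{0}|_{n_M}$ of the $p$-coordinate before committing, and $n_M$ is itself computable in $M$ (simulate $g(\widehat{0}, M)$ and count the $p$-queries). Set $p^*_M := 0^{n_M+1} \cdot 1 \cdot \widehat{0}$; by s-m-n there is a computable $\pi\colon \IN \to \IN$ with $\varphi_{\pi(M)} = p^*_M$. Kleene's recursion theorem then provides an $e^*$ with $\varphi_{e^*} = \varphi_{\pi(e^*)} = p^*_{e^*}$, so $\Kol(p^*_{e^*}) \leq e^*$. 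Taking $M := e^*$ places $p^*_M$ in the valid-input class, while $p^*_M|_{n_M} = \widehat{0}|_{n_M} = 0^{n_M}$ forces $g(p^*_M, M) = g(\widehat{0}, M)$ by continuity; but then $\varphi_{g(\widehat{0}, M)}$ would equal both $\widehat{0}$ and $p^*_M$, contradicting $p^*_M(n_M+1) = 1$.

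The step I expect to be most delicate is ensuring that $n_{(\cdot)}$, and hence $\pi$, are total computable so that Kleene recursion applies: $n_M$ is only a priori guaranteed to be finite for $M \geq \Kol(\widehat{0})$, so one has to restrict to $M$ above a fixed threshold and let $\pi$ default to a safe value otherwise. Once this self-reference is set up, the use of the recursion theorem bypasses any concern about the absolute size of a Gödel code for $p^*_M$: the fixed point $e^*$ is itself a Gödel code, so $p^*_{e^*} \in S_{e^*}$ automatically, and the diagonal contradiction closes cleanly.
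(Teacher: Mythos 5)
Your reduction of the proposition to the single separation $\G_\geq\nleqW\Kol_\geq$ is the same as the paper's (the paper gets $\Kol_\geq\nleqW\G_\geq$ from continuity of $\G_\geq$ versus discontinuity of $\Kol_\geq$, while you route it through Proposition~\ref{prop:optimal-upper}(1) and $\G_\geq\leqSW\LPO^*\leqW\K_\IN'$; both are legitimate uses of earlier results). Your first and decisive step for $\G_\geq\nleqW\Kol_\geq$ is also exactly the paper's argument: use s-m-n to get $r(i,M)$ with $\varphi_{r(i,M)}=K(\varphi_i,M)$, take $R(M):=\max_{i\leq M}r(i,M)$ as a computable admissible answer of $\Kol_\geq$ on $K(p,M)$, and conclude that the assumed reduction would make $\G_\geq$ outright computable via $g(p,M):=H((p,M),R(M))$. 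Where you diverge is the final contradiction: the paper simply invokes Proposition~\ref{prop:optimal-upper}(2) (computability of $\G_\geq$ gives $\G_\geq\leqW\id\leqW\K_\IN$, which that proposition forbids), whereas you re-prove non-computability of $\G_\geq$ from scratch by a recursion-theorem diagonalization. That is a genuinely different (and self-contained) closing move, at the cost of redoing work the paper already has.

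The one place your closing move is not yet watertight is the point you yourself flag, and your proposed repair does not repair it. You need the Kleene fixed point $e^*$ to satisfy $e^*\geq\Kol(\widehat{0})$, since the contradiction uses that $(\widehat{0},e^*)$ is a valid instance (so that $\varphi_{g(\widehat{0},e^*)}=\widehat{0}$) and, before that, that $g(\widehat{0},e^*)$ halts so that $n_{e^*}$ and $p^*_{e^*}$ are defined. Letting $\pi$ ``default to a safe value'' for small $M$ does nothing: if the recursion theorem happens to hand you a fixed point below the threshold, you get $\varphi_{e^*}$ equal to the default and no contradiction at all. The standard fix is to use the strengthened form of the recursion theorem giving fixed points above any prescribed bound (e.g.\ via an injective fixed-point function or padding), which is routine but must be said; alternatively, define $\varphi_{\pi(M)}$ by simulating $g(\widehat{0},M)$ inside the code so no totality of $\pi$'s values is needed, and still insist on a large fixed point. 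Simplest of all is to do what the paper does and cite Proposition~\ref{prop:optimal-upper}(2) once $g$ is in hand, which makes the entire diagonalization unnecessary. With that one adjustment your argument is correct.
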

\begin{proof}
$\Kol_\geq\nleqW\G_\geq$ follows since $\G_\geq$ is continuous and $\Kol_\geq$ is not.
Now let us assume for a contradiction that $\G_\geq\leqW\Kol_\geq$.
Then there are computable $H,K:\In\IN^\IN\to\IN^\IN$ such that $H\langle\id,GK\rangle$ is a 
realizer for $\G_\geq$ whenever $G$ is a realizer for $\Kol_\geq$.
Since $K$ is computable, there is a total computable $r:\IN\to\IN$ such that
$K(\varphi_i)(n)=\varphi_{r(i)}(n)$ for all $i,n\in\IN$. More precisely, one should
use a machine for $K$ here for which a prefix $K(\varphi_i)(0),...,K(\varphi_i)(n)$ might be defined,
even if $\varphi_i\not\in\dom(K)$. Given an input $(p,m)\in\dom(\G_\geq)$ it is clear
that $j:=\max\{r(0),...,r(m)\}$ is an upper bound for a G\"odel code for $K(p)$ since $m$ is an upper bound for a G\"odel code of $p$.
Hence $H\langle p,j\rangle$ is a G\"odel code for $p$. But this means $\G_\geq\leqSW\id$, as $H$ is computable. 
This contradicts Proposition~\ref{prop:optimal-upper}.

As $\G_\geq\leqW\G$ and $\Kol_\geq\leqW\G$, it follows that $\G\leqW\Kol_\geq$ would imply $\G_\geq\leqW\Kol_\geq$,
which we just disproved. Hence $\Kol_\geq\lW\G$.
\qed
\end{proof}

Hence, the diagram in Figure~\ref{fig:Godel} is complete in the sense that no further Weihrauch reductions to the 
problems from our Kirby-Paris hierarchy are possible, besides the shown ones (and those that follow by reflexivity and transitivity).

Proposition~\ref{prop:separation} also shows that the slogan of Hoyrup and Rojas is not true in a 
computability-theoretic sense. 
In fact, in this sense G\"odel numbers carry strictly more information about the sequences that
they represent than just an upper bound of their Kolmogorov complexity.

\section{Closure properties and lower bounds}
\label{sec:closure-lower}

So far we have provided upper bounds to the G\"odel problem together with a proof that
these upper bounds are optimal with respect to with respect to the Weihrauch version of the Kirby-Paris hierarchy.
Now we want to provide some closure properties of $\G$, some of which we will utilize to provide optimal
lower bounds in a certain sense. By $f|_\mathrm{c}$ we denote the restriction of a problem $f$
to all computable instances.

\begin{theorem}[Closure properties]
\label{thm:closure}
We obtain the following:
\begin{enumerate}
\item $\widehat{\G}|_{\rm c}\equivSW\G<_\mathrm{W}^*\widehat{\G}$,
\item $(\G\star\G)|_\mathrm{c}\equivSW\G$,
\item $\G^*\equivSW\G$.
\end{enumerate}
\end{theorem}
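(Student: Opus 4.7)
My plan for the three $\leqSW$-reductions in (1)--(3) is uniform and relies on the $s$-$m$-$n$ theorem: I will package the given computable $\G$-instances (together with any auxiliary data) into a single computable sequence, apply $\G$ once to obtain a G\"odel code of that combined sequence, and then uniformly extract the required G\"odel codes from it. The reverse inequalities $\G\leqSW\widehat\G|_{\rm c}$, $\G\leqSW\G^*$, and $\G\leqSW(\G\star\G)|_{\rm c}$ I will obtain from the trivial embeddings of a single instance into the corresponding compound construction (padding with trivial computable data), which preserve computability.

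For (1), given a jointly computable $(p_n)_n\in(\IN^\IN)^\IN$, the preprocessor produces the single computable sequence $p(\langle n,m\rangle):=p_n(m)$; from an output $e$ of a $\G$-realizer on $p$ the $s$-$m$-$n$ theorem supplies a total computable $s$ with $\varphi_{s(e,n)}(m)=\varphi_e(\langle n,m\rangle)$, so that $\varphi_{s(e,n)}=p_n$ for every $n$, and the postprocessor returns $(s(e,n))_n$. For (3), I apply the same idea in the finite case: encode $(k,p_0,\dots,p_{k-1})$ as a single computable $p$ with $p(0):=k$ and $p(\langle j,m\rangle+1):=p_j(m)$ (padding $p_j:=\widehat 0$ for $j\geq k$), apply $\G$ to obtain $e$ with $\varphi_e=p$, and have the postprocessor first read $k=\varphi_e(0)$ and then extract the $k$ G\"odel codes via $s$-$m$-$n$. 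For (2), I will exploit the characterization $\G\star\G\equivW\max\nolimits_{\leqW}\{f_0\circ g_0:f_0,g_0\leqW\G\}$ together with the definition of $\star$: an input to $(\G\star\G)|_{\rm c}$ is a computable $p$ together with a computable functional $\Phi$ such that $\Phi(i,p)\in\dom(\G)$ for every $i\in\G(p)$. The preprocessor packages both $p$ and $\Phi$ into a single computable sequence; a single $\G$-call yields a G\"odel code $e$ of that sequence, from which $s$-$m$-$n$ lets the postprocessor extract both a G\"odel code $i$ of $p$ (so $i\in\G(p)$) and one of $\Phi$, and then construct a G\"odel code $j$ of $\Phi(i,p)$, which is a valid $(\G\star\G)$-output.

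For the strict topological inequality $\G<_{\rm W}^*\widehat\G$ in (1), I will invoke Theorem~\ref{thm:topological} to get $\G\equiv_{\rm W}^*\C_\IN$; since parallelization is monotone under continuous reducibility, $\widehat\G\equiv_{\rm W}^*\widehat{\C_\IN}$, and combining with $\C_\IN\equivSW\lim_\IN$ from Proposition~\ref{prop:lim-inf} this gives $\widehat\G\equiv_{\rm W}^*\widehat{\lim_\IN}\equivW\lim$. The standard strict topological separation $\C_\IN<_{\rm W}^*\lim$ — reflecting that $^1\lim\equivW\C_\IN$ by Theorem~\ref{thm:first-order} while $\lim$ is not topologically equivalent to any first-order problem — then yields $\widehat\G\not\leq_{\rm W}^*\G$ and completes the argument. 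The only delicate step I anticipate is in (2), where I must unpack the definition of $\star$ carefully enough to see that a single $\G$-call applied to a sequence encoding both $p$ and $\Phi$ is sufficient, with $s$-$m$-$n$ doing all the extraction work in the postprocessor; once this is recognised, the remaining manipulations are routine applications of $s$-$m$-$n$.
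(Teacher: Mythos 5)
Your handling of (1), (3) and of the topological strictness in (1) is essentially the paper's own argument: interleave the (finitely or countably many) computable instances into one computable sequence, make a single $\G$-call, extract the individual G\"odel codes by the s-m-n theorem, and separate via the chain $\G\equiv_{\rm W}^*\C_\IN<_{\rm W}^*\lim\equiv_{\rm W}^*\widehat{\C_\IN}\equiv_{\rm W}^*\widehat{\G}$. For (2), however, you take a genuinely different and in fact shorter route. The paper fixes the representative $\G\star\G\equivSW\langle\id\times\G\rangle\circ F\circ\langle\id\times\G\rangle$ given by the cylindrification theorem, must then replace $F$ by a totalized modification $F_0$ (padding with dummy symbols) so that the outer instances $\pi_2F_0\langle p,i\rangle$ exist and are computable for \emph{all} candidate codes $i\in\IN$, and finally feeds all of them into one parallel call, reducing to $\widehat{\G}|_{\rm c}$ via item (1). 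You avoid all of this: a single $\G$-call on the packaged computable input returns a program $e$ for the entire input, and since a G\"odel code is at least as good as the input itself, the postprocessor can by s-m-n both extract some $i\in\G(p)$ and directly \emph{synthesize} a G\"odel code of the outer instance (total by the domain condition of $\star$), so the second $\G$-call is simply redundant on computable instances. Two remarks. First, to prove the theorem as stated you should run this argument on the official representative $\langle\id\times\G\rangle\circ F\circ\langle\id\times\G\rangle$ rather than on your informal version in which a name of the glue functional $\Phi$ is part of the input, or else verify that the two representatives are strongly equivalent via computable translations preserving computable instances; this is harmless, since your $H$, holding a code of the computable pair $\langle p,q\rangle$, can evaluate the fixed computable $F$ at $\langle p,i'\rangle$ itself and output the required pair $\langle\pi_1F\langle p,i'\rangle,j\rangle$. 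Second, your shortcut makes transparent why the restriction to computable instances is needed and why the open problem $\G\star\G\equivW\G$ remains untouched: for non-computable side information $p$ no program for the (still necessarily computable) outer instance can be synthesized this way, which is exactly the obstruction the paper's $F_0$-construction also runs into.
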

\begin{proof}
(1) Given a computable sequence $(p_n)_{n\in\IN}$ in $\IN^\IN$, we can compute
$p:=\langle p_0,p_1,p_2,...\rangle$ and from any $i\in\G(p)$, i.e., such that $\varphi_i=p$, we can compute
some $r(i,n)$ such that $\varphi_{r(i,n)}=p_n$. Hence, $\widehat{\G}|_{\rm c}\equivSW\G$.
We also obtain with Theorem~\ref{thm:topological}
\[\G\equiv_\mathrm{W}^*\C_\IN<_\mathrm{W}^*\lim\equiv_\mathrm{W}^*\widehat{\C_\IN}\equiv_\mathrm{W}^*\widehat{\G}.\]

(2) It is clear that $\G\leqSW(\G\star\G)|_\mathrm{c}$.
For the opposite reduction we use the cylindrification theorem~\cite[Lemma~3.10]{BP18} that guarantees that there
is a computable $F:\In\IN^\IN\to\IN^\IN$ such that
\[\G\star\G\equivSW\langle\id\times\G\rangle\circ F\circ\langle\id\times\G\rangle.\]
We are going to prove that $(\langle\id\times\G\rangle\circ F\circ\langle\id\times \G\rangle)|_\mathrm{c}\leqSW\widehat{\G}|_{\mathrm c}$,
which is sufficient by (1). 
We use the projection functions $\pi_i:\IN^\IN\to\IN^\IN,\langle p_1,p_2\rangle\mapsto p_i$ for $i\in\{1,2\}$.
One difficulty now is that for G\"odel numbers $i\in\IN$ of non-total functions $\varphi_i$
the value $F\langle p,i\rangle$ does neither need to exist nor does it need to be computable. 
However, without loss of generality, we can assume that $\langle p,i\rangle\in\dom(F)$ if and only if $i$ is the index of a total function.
We just evaluate $\varphi_i(n)$ for $n=0,1,2,...$ internally in the algorithm for $F$ and we only
let $F\langle p,i\rangle(n)$ be defined if the former value exists.

The fact that $F\langle p,i\rangle$ is undefined for some $i$, 
is the reason why we work with a modification $F_0$ of $F$
that we define as follows: given $p,i$ we try to evaluate $F\langle p,i\rangle(n)$ for $n=0,1,2,...$. 
If the value $F\langle p,i\rangle(k)$ is found for all $k=0,...,n$,
then we copy $F\langle p,i\rangle(n)+1$ to the output.
We ensure that the output comes into an even position if and only if $n$ is even.
As long as we do not find such a number $F\langle p,i\rangle(k)$, we just copy $0$ to the output.
It follows from the construction that:
\begin{enumerate}
\item[(a)] $F_0:\IN^\IN\to\IN^\IN$ is total and computable,
\item[(b)] $\pi_jF_0\langle p,i\rangle-1=\pi_jF\langle p,i\rangle$, if $\varphi_i$ is total,
\end{enumerate}
for all $p\in\IN^\IN$, $j\in\{1,2\}$ and $i\in\IN$.
Upon input of computable $\langle p,q\rangle$
we can now consider
\[i\in\G\langle q,\pi_2F_0\langle p,0\rangle,\pi_2F_0\langle p,1\rangle,\pi_2F_0\langle p,2\rangle,...\rangle\]
and from $i$ we can compute $r(i)\in\G(q)$, $s(i)\in\G\pi_2F_0\langle p,r(i)\rangle$ and $t(i)$ with $\varphi_{t(i)}=\varphi_{s(i)}-1=\pi_2F\langle p,r(i)\rangle$.
We can also computable $p_0:=\pi_1F_0\langle p,r(i)\rangle-1=\pi_1F\langle p,r(i)\rangle$. Then 
\[\langle p_0,t(i)\rangle\in(\langle\id\times\G\rangle\circ F\circ\langle\id\times \G\rangle)|_\mathrm{c}\langle p,q\rangle.\]
This describes the desired reduction.

(3) Given any finite number of computable $p_1,...,p_n\in\IN^\IN$, 
we can compute G\"odel numbers of the individual $p_1,...,p_n$ from any of the G\"odel
number in $\G(\langle n,\langle p_1,...,p_n\rangle\rangle)$ uniformly in $n$. 
This is a consequence of the utm- and the smn-theorem.
\qed
\end{proof}

We note that the algorithm above does not directly show $\G\star\G\equivW\G$, as for non-computable $p$ the
value $\pi_2F_0\langle p,i\rangle$ for total $\varphi_i$ might be non-computable, even though $\pi_2F\langle p,i\rangle$ is necessarily computable.
This is because the pattern of dummy symbols zero within $\pi_2F_0\langle p,i\rangle$  might depend on the non-computable $p$.
This is the price for converting $F$ into a total $F_0$. This raises the following open problem.

\begin{problem}
Is $\G\star\G\equivW\G$?
\end{problem}

Of course, $\G\star\G\equiv_\mathrm{W}^*\G$ holds, but the question is whether this is computably true.

A surprising consequence of the parallelization properties of the G\"odel problem $\G$ is that $\G$
is a natural example for a problem that is effectively discontinuous, but not computably so.
Such examples where constructed in \cite[Proposition~19]{Bra22}, but no natural example was known so far.
Intuitively, a problem $f$ is {\em effectively discontinuous}, if given a continuous function $F:\In\IN^\IN\to\IN^\IN$
that is a potential realizer of $f$, one can continuously determine some input on which $F$ fails to realize $f$.
Analogously, in the case of {\em computable discontinuity} the input is found in a computable way.
See \cite{Bra22} for more precise definition. The {\em discontinuity problem}
\[\DIS:\IN^\IN\mto\IN^\IN,p\mapsto\{q\in\IN^\IN:\U(p)\not=q\}\]
has been defined in \cite{Bra22}, where $\U:\In\IN^\IN\to\IN^\IN$ is a standard computable universal function.
We are not using the discontinuity problem in any technical way here, hence the definition
is not relevant for the following. We have $\DIS\lSW\LPO\lSW\C_\IN$ and 
in \cite[Theorem~17]{Bra22} we have proved the following for every problem $f$.

\begin{fact}[Effective discontinuity]
\label{fact:effective-discontinuity}
We obtain:
\begin{enumerate}
\item $f$ is effectively discontinuous $\iff \DIS\leq_{\mathrm{sW}}^*f$,
\item $f$ is computably discontinuous $\iff \DIS\leq_{\mathrm{sW}}f$.
\end{enumerate}
\end{fact}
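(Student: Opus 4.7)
The plan is to prove item~(2) first and then derive item~(1) by relativizing the entire argument to a common oracle; this works because every step below invokes only the utm/smn theorems and the notion of a realizer, both of which relativize uniformly.

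For the direction $\DIS \leqSW f \Rightarrow f$ computably discontinuous, I would fix a reduction witnessed by computable $H,K$ with $HGK\vdash\DIS$ for every $G\vdash f$. Given a code of a continuous partial $F\colon\In\IN^\IN\to\IN^\IN$ (a purported realizer of $f$), I need to computably produce an input on which $F$ fails to realize $f$. Using smn, uniformly in the code of $F$, I construct $p\in\IN^\IN$ such that $\U(p)=HFK(p)$, i.e.\ $p$ codes the composition $HFK$ under the fixed universal $\U$. If $F$ were a realizer of $f$, then $HFK$ would realize $\DIS$, forcing $HFK(p)\neq \U(p)$, a contradiction. Hence $F$ fails to realize $f$ somewhere, and tracing through $H$ and $K$ shows the failure must occur at $K(p)$. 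Since $K$ and the smn index $p$ are computable from the code of $F$, this delivers the required effective witness.

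For the converse, assume $f$ is computably discontinuous via a computable map $D$ sending each code of a continuous partial $F$ to a point $D(F)$ on which $F$ fails to realize $f$. I would construct computable $H,K$ with $HGK\vdash \DIS$ for every $G\vdash f$ as follows. Given $p$, I uniformly assemble (via smn) a continuous partial $F_p$ whose action on a designated family of witness inputs is engineered so that, after post-processing by a fixed computable $H$ independent of $p$, the only way $F_p$ could realize $f$ at an input $x$ would be for $H F_p(x)$ to equal $\U(p)$. Set $K(p):=D(F_p)$, a point at which $F_p$ fails to realize $f$. Then for any genuine realizer $G$ of $f$, the value $HG(K(p))$ must differ from $\U(p)$, since agreement would make $F_p$ realize $f$ at $K(p)$, contradicting the choice of $K(p)$ as a failure point. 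Hence $HGK\vdash\DIS$, as required.

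The main obstacle is the self-referential packaging in the converse: one must arrange $F_p$ so that ``$F_p$ realizes $f$ at $K(p)$'' is equivalent to ``$HG(K(p))=\U(p)$'', while keeping $H$ fixed and both $H$ and $K$ computable independently of the choice of $G$. This is handled by a smn-style construction together with a careful definition of the designated witness inputs, with mild bookkeeping to deal with the partiality of $\U$. Once (2) is in place, item~(1) follows by repeating the same argument with ``continuous'' in place of ``computable'' throughout, i.e.\ by relativizing the utm/smn machinery and the computable discontinuity extractor to a common oracle $q\in\IN^\IN$.
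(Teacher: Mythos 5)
First, a point of reference: the paper does not prove this statement at all — it is imported verbatim as Fact~\ref{fact:effective-discontinuity} from \cite[Theorem~17]{Bra22} — so your attempt has to be measured against the argument given there. Your direction ``$\DIS\leqSW f\Rightarrow f$ computably discontinuous'' is essentially that argument: from a code of the candidate realizer $F$ you produce a point $p$ with $\U(p)=HFK(p)$ and conclude that $F$ must fail at $K(p)$. Two small repairs are needed: the gloss ``$p$ codes the composition $HFK$ under $\U$'' is not what you use — a code $q$ of $HFK$ satisfies $\U\langle q,x\rangle=HFK(x)$, whereas you need the diagonal fixed point $\U(p)=HFK(p)$, which is a recursion-theorem construction (smn plus self-application), not mere coding; and the step ``the failure must occur at $K(p)$'' needs the standard patching argument (if $FK(p)$ were a correct name, replace $F$ by a genuine realizer $G$ agreeing with $F$ at $K(p)$, so that $HGK(p)=HFK(p)\neq\U(p)$ contradicts the fixed point). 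Your relativization argument for item~(1) is fine, since the utm/smn machinery and the above construction relativize uniformly.

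The genuine gap is in the converse of (2). There you only state the specification $F_p$ should meet (``engineered so that \dots the only way $F_p$ could realize $f$ at $x$ would be for $HF_p(x)$ to equal $\U(p)$''), call this the main obstacle, and assert it ``is handled by a smn-style construction''; no construction of $F_p$ or of $H$ is given, and the emphasis on ``self-referential packaging'' suggests you expect recursion-theorem machinery here, which is not needed in this direction. The construction is much simpler: given $p$, let $F_p$ be the constant partial function with value $\U(p)$; by smn a code $s(p)$ of $F_p$ is computable from $p$. Set $K(p):=D(s(p))$ and $H:=\id$. If $G$ is any realizer of $f$ and $GK(p)=\U(p)$, then $\U(p)$ is total and is a correct name of an $f$-solution at the instance named by $K(p)$, hence the constant function $F_p$ realizes $f$ at $D(s(p))$ — contradicting the defining property of the computable discontinuity witness $D$; if $\U(p)$ is not total, $GK(p)\neq\U(p)$ holds trivially. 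Thus $HGK\vdash\DIS$ for every $G\vdash f$, with $H,K$ computable and independent of $G$. Without this (or an equally concrete) definition of $F_p$ and $H$, your converse remains a restatement of what has to be proved rather than a proof.
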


In~\cite[Theorem~5.4]{Bra21} we have proved that $\widehat{\DIS}\equivW\NON$,
where 
\[\NON:\IN^\IN\mto\IN^\IN,p\mapsto\{q\in\IN^\IN:q\nleqT p\}.\]

Hence, we obtain the following corollary.

\begin{corollary}[Effective discontinuity]
\label{cor:effective-discontinuity}
$\DIS\leq_{\mathrm{sW}}^*\Kol_\geq$, but $\DIS\nleqW\G$. 
That is, $\G$ and $\Kol_\geq$ are effectively discontinuous, but not computably so.
\end{corollary}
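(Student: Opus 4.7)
The plan is to handle the two halves of the claim separately. For the positive reduction $\DIS \leq_{\mathrm{sW}}^* \Kol_\geq$ I will simply chain known reductions: the paper already records $\DIS \lSW \LPO \lSW \C_\IN$, while Theorem~\ref{thm:topological} gives $\C_\IN \equiv_{\mathrm{sW}}^* \Kol_\geq$. Composing yields $\DIS \leq_{\mathrm{sW}}^* \Kol_\geq$, and with $\Kol_\geq \leqSW \G$ from Lemma~\ref{lem:basic-problems} also $\DIS \leq_{\mathrm{sW}}^* \G$. Fact~\ref{fact:effective-discontinuity}(1) then supplies effective discontinuity of both problems.

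For $\DIS \nleqW \G$ I will argue by contradiction, exploiting that parallelization is monotone under $\leqW$ together with the self-improvement $\widehat{\G}|_\mathrm{c} \equivSW \G$ in Theorem~\ref{thm:closure}(1). Assuming $\DIS \leqW \G$, I obtain $\widehat{\DIS} \leqW \widehat{\G}$, and since $\widehat{\DIS} \equivW \NON$ is recalled just above the corollary, this gives $\NON \leqW \widehat{\G}$ via some computable $H, K$. I will then evaluate the reduction at the computable input $p_0 := \widehat{0}$. Because $K$ is computable, $K(p_0)$ is computable and hence in $\dom(\widehat{\G}|_\mathrm{c})$. The reduction $\widehat{\G}|_\mathrm{c} \leqSW \G$ of Theorem~\ref{thm:closure}(1) produces a realizer of $\widehat{\G}|_\mathrm{c}$ by composing computable pre- and post-processing with any realizer of $\G$; since realizers of $\G$ return natural numbers, and natural numbers are automatically computable as points in $\IN^\IN$, this composite realizer sends computable inputs to computable outputs. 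Extending it arbitrarily to a realizer $\widehat{G}$ of the whole of $\widehat{\G}$, I obtain that $\widehat{G}K(p_0)$ is computable, hence $H\langle p_0, \widehat{G}K(p_0)\rangle$ is computable as well. This contradicts its required membership in $\NON(\widehat{0})$, which consists only of non-computable sequences. Fact~\ref{fact:effective-discontinuity}(2) then yields that $\G$ is not computably discontinuous, and via $\Kol_\geq \leqSW \G$ from Lemma~\ref{lem:basic-problems} and transitivity also $\DIS \nleqW \Kol_\geq$, so $\Kol_\geq$ is not computably discontinuous either.

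The main obstacle I expect is justifying the existence of the particular realizer $\widehat{G}$ above: although $\widehat{\G}$ is highly discontinuous, the argument only requires that on the single computable input $K(p_0)$ one can choose a computable output, and this is exactly what Theorem~\ref{thm:closure}(1) delivers via a realizer of $\G$ whose outputs are natural numbers. The behaviour of $\widehat{G}$ on the remaining, non-computable, part of $\dom(\widehat{\G})$ is irrelevant and can be fixed by any choice.
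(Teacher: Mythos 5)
Your proposal is correct and follows essentially the same route as the paper: the positive part chains $\DIS\leqSW\C_\IN\equiv_{\mathrm{sW}}^*\Kol_\geq$, and the negative part parallelizes $\DIS\leqW\G$ to $\NON\equivW\widehat{\DIS}\leqW\widehat{\G}$ and then exploits Theorem~\ref{thm:closure}(1) on computable instances, which is exactly the paper's argument that $\NON|_{\mathrm c}\leqW\widehat{\G}|_{\mathrm c}\leqW\G$ is impossible because $\G$ has only computable outputs. Your explicit evaluation at $\widehat{0}$ with a hand-picked realizer is just an unwinding of that restriction argument (one should merely note that the chosen realizer of $\G$ can be taken to output a canonical computable name at the relevant computable input), so the two proofs coincide in substance.
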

\begin{proof}
Theorem~\ref{thm:topological} implies $\DIS\leq_\mathrm{sW}\C_\IN\equiv_\mathrm{sW}^*\Kol_\geq$.
Suppose $\DIS\leqW\G$. Then $\NON\equivW\widehat{\DIS}\leqW\widehat{\G}$ follows.
But this would imply $\NON|_{\mathrm c}\leqW\widehat{\G}|_\mathrm{c}\leqW\G$ by Proposition~\ref{thm:closure},
which is obviously wrong as $\G$ has no non-computable outputs.
\qed
\end{proof}

In some well-defined sense $\DIS$ is the weakest natural discontinuous problem~\cite{Bra21}.
That $\DIS$ is not computably reducible to $\G$ means that the uniform computational power of $\G$
is extremely weak. No other standard lower bounds of $\G$ (other than related lower bounds such as $\Kol_\geq,\G_\geq$) 
are known, besides the trivial lower bound $\id$.
The situation is quite different for the Kolmogorov complexity $\Kol$, where we have the following easy result.

\begin{proposition}[Lower bounds for Kolmogorov complexity]
\label{prop:LPO-Kolmogorov}
$\LPO\lW\Kol$ and $\lim\lW\widehat{\Kol}$.
\end{proposition}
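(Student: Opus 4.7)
The plan is to split the statement into four claims: (a) $\LPO \leqW \Kol$, (b) $\Kol \nleqW \LPO$, (c) $\lim \leqW \widehat{\Kol}$, and (d) $\widehat{\Kol} \nleqW \lim$.

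For (a) I would give a strong reduction directly. Define a computable $K\colon\IN^\IN\to\IN^\IN$ by $K(p)(n):=0$ if $p(0)=\cdots=p(n)=0$ and $K(p)(n):=1$ otherwise. The key observation is that $K(p)$ is always eventually constant, being equal to $\widehat{0}$ if $p=\widehat{0}$ and of the form $0^{n_0}1^\infty$ (with $n_0$ the first non-zero index of $p$) otherwise. Hence $K(p)$ is total computable, so $K(p)\in\dom(\Kol)$ for every $p$. Letting $c_0:=\Kol(\widehat{0})$ be the (fixed) smallest G\"odel number of $\widehat{0}$, we get $\Kol(K(p))=c_0$ exactly when $p=\widehat{0}$. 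Defining $H(i):=1$ if $i=c_0$ and $H(i):=0$ otherwise — a computable function, since $c_0$ is a specific natural number that can be hard-coded into the Turing machine — yields $\LPO(p)=H(\Kol(K(p)))$. Statement (c) then follows by parallelizing: $\lim\equivSW\widehat{\LPO}\leqSW\widehat{\Kol}$, using monotonicity of $\widehat{\,\cdot\,}$ with respect to $\leqSW$.

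For the lower bounds (b) and (d) I would invoke the Borel-measurability framework of Fact~\ref{fact:Borel} together with Corollary~\ref{cor:Borel}, which asserts that $\Kol$ fails to be effectively $\SO{2}$-measurable; this translates to $\Kol\nleqW\lim$. Since $\LPO\leqW\lim_\IN\leqSW\lim$ by the standard coding of $\LPO$ via the monotone $\{0,1\}$-valued sequence indicating whether a non-zero entry has yet been seen, any reduction $\Kol\leqW\LPO$ would give $\Kol\leqW\lim$ by transitivity, contradicting Corollary~\ref{cor:Borel} and proving (b). For (d), the trivial inequality $\Kol\leqSW\widehat{\Kol}$ reduces any supposed $\widehat{\Kol}\leqW\lim$ to $\Kol\leqW\lim$, again contradicting the same corollary.

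I do not expect a serious obstacle: the paper itself flags this as an \emph{easy} consequence, and the only non-mechanical step is choosing the right $K$ in (a). A subtle point worth emphasizing in the final writeup is that $K(p)$ need not be computable uniformly in $p$; it only needs to lie in $\dom(\Kol)$ for each individual $p$, and this is precisely what the eventual-constant structure of $K(p)$ guarantees.
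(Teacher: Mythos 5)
Your proof is correct and follows essentially the same route as the paper: the reduction $\LPO\leqSW\Kol$ obtained by mapping arbitrary sequences to eventually constant (hence computable) ones and comparing the output with the hard-coded minimal G\"odel number of $\widehat{0}$, followed by parallelization to get $\lim\equivSW\widehat{\LPO}\leqSW\widehat{\Kol}$, is exactly the paper's argument. The only cosmetic difference is in the strictness parts: you derive both $\Kol\nleqW\LPO$ and $\widehat{\Kol}\nleqW\lim$ from $\Kol\nleqW\lim$ (Corollary~\ref{cor:Borel}), whereas the paper rules out the first via the topological equivalence $\Kol\equiv_{\rm W}^*\C_\IN$ of Theorem~\ref{thm:topological} and the second via $\lim\leqW\WKL'$ together with Corollary~\ref{cor:WKL-lim}; both variants rest on the same previously established separations, so this is a matter of citation rather than of substance.
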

\begin{proof}
It is easy to see that $\LPO|_\mathrm{c}\equivSW\LPO$: in order to show $\LPO\leqSW\LPO|_\mathrm{c}$ one
just has to translate the zero sequence into the zero sequence and any sequence with prefix $0^n1$
into $0^n\widehat{1}$.
Hence it suffices to show $\LPO|_\mathrm{c}\leqSW\Kol$ for the positive part of the first reduction.
There is one specific smallest G\"odel number $i$ of the constant zero sequence in $\IN^\IN$.
Given $p\in\IN^\IN$ we can just consider $\Kol(p)$ and we obtain 
\[p=\widehat{0}\iff\Kol(p)=i.\]
Hence, $\LPO|_\mathrm{c}\leqSW\Kol$ follows, which implies $\lim\equivSW\widehat{\LPO}\leqSW\widehat{\Kol}$.
However, Weih\-rauch equivalence is impossible in both statements. 
In the former case $\LPO\equivW\Kol$ contradicts $\Kol\equiv_\mathrm{W}^*\C_\IN$, which holds according to 
Theorem~\ref{thm:topological}.
In the latter case $\lim\equivW\widehat{\Kol}$ cannot hold, as $\lim\leqW\WKL'$, but $\Kol\nleqW\WKL'$
according to Corollary~\ref{cor:WKL-lim}. 
\qed
\end{proof}

As $\DIS\leqW\LPO$, we have clearly separated the G\"odel problem $\G$ from the Kolmogorov complexity $\K$.
The G\"odel problem $\G$ is also separated from its bounded counterpart $\G_\geq$ as the latter is continuous and the former not.

\begin{corollary}[G\"odel and Kolmogorov]
\label{cor:GK}
$\G_\geq\lW\G\lW\K$.
\end{corollary}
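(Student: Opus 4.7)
The plan is straightforward, since both non-strict reductions $\G_\geq \leq_W \G$ and $\G \leq_W \Kol$ are immediate from Lemma~\ref{lem:basic-problems}. Only the two strictness assertions require any work, and for each I would chain together results already established earlier in the paper rather than devising anything new.

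For $\G_\geq \lW \G$, I would argue that $\G \not\leq_W \G_\geq$ by a continuity obstruction. Proposition~\ref{prop:Ggeq-halting} shows $\G_\geq$ is computable relative to $\emptyset'$, hence continuous. A putative Weihrauch reduction $\G \leq_W \G_\geq$ via computable $H,K$, composed with a continuous realizer of $\G_\geq$ in the middle, would yield a continuous realizer of $\G$; but Theorem~\ref{thm:topological} gives $\G \equiv_\mathrm{W}^* \C_\IN$, and $\C_\IN$ is not continuous, so $\G$ admits no continuous realizer. This is exactly the remark made in the paragraph preceding the corollary, formalized.

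For $\G \lW \Kol$, the task is to show $\Kol \not\leq_W \G$, and here I would route the argument through the discontinuity problem. Proposition~\ref{prop:LPO-Kolmogorov} gives $\LPO \leq_W \Kol$, and the inequality $\DIS \leq_W \LPO$ is recalled in the paragraph introducing $\DIS$ in Section~\ref{sec:closure-lower}, so by transitivity $\DIS \leq_W \Kol$. On the other hand, Corollary~\ref{cor:effective-discontinuity} asserts $\DIS \not\leq_W \G$. Hence $\Kol \leq_W \G$ would, by transitivity again, give $\DIS \leq_W \G$, a contradiction.

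There is really no obstacle; the corollary is a bookkeeping consequence of Lemma~\ref{lem:basic-problems}, Proposition~\ref{prop:Ggeq-halting}, Theorem~\ref{thm:topological}, Proposition~\ref{prop:LPO-Kolmogorov}, and Corollary~\ref{cor:effective-discontinuity}. The only subtlety worth a sentence of verification is that continuity transfers downward along $\leq_W$, which is transparent from Definition~\ref{def:Weihrauch} because $H$ and $K$ are computable and hence continuous, so that $H\langle\id,GK\rangle$ is continuous whenever $G$ is.
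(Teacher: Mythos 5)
Your proposal is correct and follows essentially the same route as the paper: the paper justifies the corollary exactly by the continuity of $\G_\geq$ versus the discontinuity of $\G$ (for the first strict inequality) and by $\DIS\leqW\LPO\leqW\Kol$ together with $\DIS\nleqW\G$ from Corollary~\ref{cor:effective-discontinuity} (for the second), with the non-strict reductions coming from Lemma~\ref{lem:basic-problems}.
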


\section{Conclusions}
\label{sec:conclusions}

We have seen that the G\"odel problem and its variants have natural classifications
in the topological Weihrauch lattice. The halting problem turned out to be the optimal oracle
that validates these classifications. With respect to the computability-theoretic version of Weihrauch
reducibility the situation was more complex. 
We established an optimal upper bound with respect to the Weihrauch version of the Kirby-Paris hierarchy
 that we have used to classify the complexity. 
We have also discussed closure properties and lower bounds. 
 
Now we still want to argue why the closed and compact choice problems
form natural degrees for these purposes and why they are legitimate representatives
of the Kirby-Paris hierarchy.
We formulate a fairly general and relatively vague thesis that argues
for a necessary condition that Weihrauch degrees should satisfy in order to be legitimate counterparts 
of axiom systems in reverse mathematics.

\begin{thesis}
\label{thesis}
A necessary condition that a Weihrauch degree $d$ has to satisfy in order to be considered as a correspondent of an axiom 
system $A$ of reverse mathematics is that $d$ is Weihrauch equivalent to a suitable interpretation of $A$ as a problem
and that $d$ is closed under composition, i.e., $d\star d\equivW d$.
\end{thesis}

The reason for requesting closure under composition is that this corresponds to closure under deduction,
which is for free in reverse mathematics. We claim that the problems shown in table~\ref{fig:axiom-systems}
satisfy these requirements

\begin{figure}[htb]
\begin{center}
\begin{tabular}{cc}
\hline\rule[-1.5mm]{0pt}{5.5mm}
{\bf\color{black}Weihrauch complexity\ } & {\ \bf\color{black}Reverse mathematics}\\\hline
&\\[-0.3cm]
$\C_{\IN^\IN}$ & $\ATR_0$\\
$\lim^\diamond$ & $\ACA_0$\\
$\WKL$ & $\WKL_0^*$\\
$\C_\IN^{(n)}$ & $\I\mathrm{\Sigma}^0_{n+1}$\\
$\K_\IN^{(n)}$ & $\B\mathrm{\Sigma}^0_{n+1}$\\
$\id$    & $\RCA_0^*$\\\hline
\end{tabular}
\end{center}
\caption{Weihrauch degrees that correspond to reverse mathematics axiom systems.}
\label{fig:axiom-systems}
\end{figure}

The observation by the author and Rakotoniaina that closed and compact choice problems correspond to the induction and boundedness principles~\cite{BR17}
is essentially based on the equivalences stated in Proposition~\ref{prop:lim-inf}.
Alternative candidates for this correspondents to the induction principles have, for instance, been provided by
Davis, Hirschfeldt, Hirst, Pardo, Pauly and Yokoyama in \cite[Theorems~6--9]{DHH+20}.
They studied
two combinatorial principles $\ERT$ and $\ECT$, and they proved that $\ERT$ is equivalent to $\I\mathrm{\Sigma}^0_1$
and $\ECT$ is equivalent to $\I\mathrm{\Sigma}^0_2$ over $\RCA_0$, whereas
$\ERT\equivW\LPO^*$ and $\ECT\equivW\T\C_\IN^*$.
However if, in line with Thesis~\ref{thesis}, we take closure under composition, then we obtain:

\begin{proposition}
\label{prop:ERT-ECT}
$\ERT^\diamond\equivW(\LPO^*)^\diamond\equivW\C_\IN\mbox{ and }\ECT^\diamond\equivW(\T\C_\IN^*)^\diamond\equivSW\C_\IN'$.
\end{proposition}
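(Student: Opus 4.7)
The overall plan is to bootstrap directly from the equivalences $\ERT\equivW\LPO^*$ and $\ECT\equivW\T\C_\IN^*$ of \cite{DHH+20} and then apply the diamond characterization due to Westrick that is already recalled in Section~\ref{sec:weihrauch}: for every $f$ with $\id\leqW f$, the degree $f^\diamond$ is the least degree above $f$ that is closed under the compositional product $\star$. Combined with Corollary~\ref{cor:closure-composition}, which tells us that the benchmark degrees $\C_\IN^{(n)}$ and $\K_\IN^{(n)}$ are all closed under $\star$, this will collapse the diamond closures onto the Kirby--Paris hierarchy.

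For the first statement, the equivalence $\ERT^\diamond\equivW(\LPO^*)^\diamond$ is immediate from $\ERT\equivW\LPO^*$ since $\diamond$ is invariant on Weihrauch degrees. For $(\LPO^*)^\diamond\equivW\C_\IN$, the upper bound uses $\LPO^*\leqSW\C_\IN$ from Proposition~\ref{prop:lim-inf} together with the fact that $\C_\IN$ is closed under $\star$ by Corollary~\ref{cor:closure-composition}; Westrick's theorem then yields $(\LPO^*)^\diamond\leqW\C_\IN$. The matching lower bound is even easier: $\LPO\leqW\LPO^*$ entails $\LPO^\diamond\leqW(\LPO^*)^\diamond$, and Corollary~\ref{cor:closed-compact} gives $\LPO^\diamond\equivW\C_\IN$.

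The second statement proceeds by the same recipe one level up. From $\ECT\equivW\T\C_\IN^*$ we get $\ECT^\diamond\equivW(\T\C_\IN^*)^\diamond$. For $(\T\C_\IN^*)^\diamond\equivSW\C_\IN'$, the upper bound reduces $\T\C_\IN^*\leqW\C_\IN'$ (which holds because $\T\C_\IN\leqW\C_\IN'$ in the sense of \cite{DHH+20} and parallel products below $\C_\IN'$ are absorbed using Corollary~\ref{cor:closure-composition} applied to $n=1$), and since $\C_\IN'$ is closed under $\star$, Westrick's theorem yields $(\T\C_\IN^*)^\diamond\leqW\C_\IN'$. The lower bound comes from $\LPO'\leqW\T\C_\IN^*$ (the jumped version of the trivial $\LPO\leqW\C_\IN^*$), which gives $(\LPO')^\diamond\leqW(\T\C_\IN^*)^\diamond$, and by Theorem~\ref{thm:first-order} we have $(\LPO')^\diamond\equivW\C_\IN'$.

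The main obstacle is upgrading the Weihrauch equivalence to a strong one for the $\C_\IN'$ case, since $\diamond$ and Westrick's theorem are only stated with respect to $\leqW$. To get $\equivSW$ I would combine Theorem~\ref{thm:first-order}, which gives $\C_\IN'\equivSW(\LPO')^\diamond$ strongly, with the fact that $\C_\IN'$ is a cylinder (so any $\leqW$ reduction into it can be upgraded to $\leqSW$). This, together with the routine observation that $\ECT^\diamond$ and $(\T\C_\IN^*)^\diamond$ are themselves cylinders once $\id\leqSW$ them, will promote the Weihrauch equivalence with $\C_\IN'$ to the strong equivalence claimed in the proposition.
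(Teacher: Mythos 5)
Your first equivalence and the upper bound for the second are fine and essentially follow the paper's route (Theorem~\ref{thm:first-order}, Corollary~\ref{cor:closure-composition}, and Westrick's characterization of the diamond). The genuine gap is your lower bound for the $\ECT$ case. You claim $\LPO'\leqW\T\C_\IN^*$ and justify it as ``the jumped version of the trivial $\LPO\leqW\C_\IN^*$''. Jumping that reduction only gives $\LPO'\leqW(\C_\IN^*)'\equivW(\C_\IN')^*\equivW\C_\IN'$, which is the upper bound you are trying to match, not a lower bound on $(\T\C_\IN^*)^\diamond$: the totalization $\T\C_\IN$ is not the jump of $\C_\IN$, and in fact $\T\C_\IN\lW\C_\IN'$. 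Worse, the assertion $\LPO'\leqW\T\C_\IN^*$ itself is exactly the delicate point and is not available: with finitely many \emph{parallel} total-choice queries, all instances must be built from the input $p$ before any answer is seen. The natural use of $\T\C_\IN$ produces a candidate bound $i$ beyond which no zero of $p$ should occur, but deciding whether a zero occurs after position $i$ is a $\mathrm{\Sigma}^0_1$-question that the computable outer functional cannot answer; it requires a \emph{second} query whose instance depends on the first answer. This is precisely why the paper proves Lemma~\ref{lem:LPO-TCN-CN}, namely $\LPO'\leqW\T\C_\IN\star\T\C_\IN\leqW\C_\IN'$, using a sequential (compositional) application of $\T\C_\IN$; that suffices for the proposition because $(\T\C_\IN^*)^\diamond$ is closed under $\star$, so $(\LPO')^\diamond\leqW(\T\C_\IN^*)^\diamond$ follows, and $(\LPO')^\diamond\equivW\C_\IN'$ by Theorem~\ref{thm:first-order}. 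Without this lemma (or a substitute sequential argument), your proof does not establish $\C_\IN'\leqW\ECT^\diamond$.

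A secondary problem is your mechanism for upgrading to the strong equivalence: $\C_\IN'$ is not a cylinder. Its outputs are single natural numbers, so already $\id\nleqSW\C_\IN'$ (a realizer producing canonical names of numbers has countable range, so no computable postprocessing of its output alone can reproduce an arbitrary input), and hence weak reductions into $\C_\IN'$ cannot be promoted to strong ones by a cylinder argument. If you want the $\equivSW$ stated in the proposition, you need a different justification than the one you give.
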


Hence, we arrive back at the classes that we consider as correspondents of $\I\mathrm{\Sigma}^0_1$ and $\I\mathrm{\Sigma}^0_2$,
respectively. 
Here the first equivalence $(\LPO^*)^\diamond\equivW\C_\IN$ follows from Theorem~\ref{thm:first-order}.
The equivalence $(\T\C_\IN^*)^\diamond\equivSW\C_\IN'$ is a consequence of the latter theorem too, 
when one additionally considers the following lemma.

\begin{lemma}
\label{lem:LPO-TCN-CN}
$\LPO'\leqW\T\C_\IN\star\T\C_\IN\leqW\C_\IN'$.
\end{lemma}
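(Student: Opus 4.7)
My plan is to prove the two reductions separately.

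For the upper bound $\T\C_\IN\star\T\C_\IN\leqW\C_\IN'$ I will first establish $\T\C_\IN\leqW\C_\IN'$; the result will then follow from Corollary~\ref{cor:closure-composition}, which gives $\C_\IN'\star\C_\IN'\equivW\C_\IN'$, together with monotonicity of $\star$: $\T\C_\IN\star\T\C_\IN\leqW\C_\IN'\star\C_\IN'\equivW\C_\IN'$. To see that $\T\C_\IN\leqW\C_\IN'$, I argue that $\T\C_\IN$ is computable from the oracle $\emptyset''$ uniformly in $p$: given $p\in\IN^\IN$, one uses $p''$ to decide the $\mathrm{\Pi^0_2}(p)$-predicate ``$\range(p)=\IN$'', outputting any value (say $0$) if it holds, and otherwise one enumerates $n=0,1,2,\ldots$ using $p'$ to test the $\mathrm{\Sigma^0_1}(p)$-predicate ``$n\in\range(p)$'' until the least $n\notin\range(p)$ is returned. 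This establishes $\T\C_\IN\leqW\lim'$. Since $\T\C_\IN$ has codomain $\IN$ and is thus a first-order problem, the characterization ${}^1(\lim')\equivSW\C_\IN'$ from Theorem~\ref{thm:first-order} (with $n=1$) implies $\T\C_\IN\leqW{}^1(\lim')\equivSW\C_\IN'$, as desired.

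For the lower bound $\LPO'\leqW\T\C_\IN\star\T\C_\IN$ the input is a limit-approximation $(p_k)_k$ converging to $p\in\IN^\IN$ and the task is to decide the bit $\LPO(p)=[p=\widehat{0}]$. My plan is to amalgamate two $\T\C_\IN$ calls. The first is applied to a computable enumeration, built from $(p_k)$, that lists codes of pairs $(j,k)$ for which the observation $p_k(j)\neq 0$ has been made; an output not enumerated by this sequence identifies a position-stage pair at which no non-zero observation has been seen, providing a ``commitment point''. The second $\T\C_\IN$ is then fed an enumeration, computable from $(p_k)$ and the first output, whose range is designed to coincide with $\IN\setminus\{\LPO(p)\}$ on this commitment; the returned value is then $\LPO(p)$, which a simple computable post-processing reads off as a bit.

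The main obstacle I expect is in the lower bound construction: the totalization of $\T\C_\IN$ may return an arbitrary natural number whenever the input enumeration happens to cover $\IN$, and one must design both enumerations so that this degenerate behaviour is still compatible with the correct value of $\LPO(p)$. In the critical case $p=\widehat{0}$ the first enumeration must remain a proper subset of $\IN$ so that the first $\T\C_\IN$ output is informative, while in the case $p\neq\widehat{0}$ the analogous argument must drive the second call to the opposite bit; ensuring uniformity across both cases is the delicate engineering in the proof.
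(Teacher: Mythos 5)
Your upper bound is correct, and it takes a slightly different route than the paper: the paper obtains $\T\C_\IN\star\T\C_\IN\leqW\C_\IN'$ by citing a known reduction for $\T\C_\IN$ and then applying Corollary~\ref{cor:closure-composition}, whereas you prove $\T\C_\IN\leqW\lim'$ directly (uniform computability from the second jump of the input) and then use the first-order characterization ${}^1(\lim')\equivSW\C_\IN'$ from Theorem~\ref{thm:first-order}, together with monotonicity of $\star$ and $\C_\IN'\star\C_\IN'\equivW\C_\IN'$. That part is fine and essentially self-contained.

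The lower bound $\LPO'\leqW\T\C_\IN\star\T\C_\IN$, however, has a genuine gap, located exactly where you flag the ``delicate engineering''. The pair $\langle j,k\rangle$ returned by your first $\T\C_\IN$ call only certifies $p_k(j)=0$, a fact about one position at one finite stage of the approximation; since the approximation may change arbitrarily afterwards, this carries no information about the limit $p$ and cannot serve as a commitment. In particular, your second enumeration cannot be ``designed to coincide with $\IN\setminus\{\LPO(p)\}$'': that would require enumerating $0$ into it exactly when $p=\widehat{0}$, a $\mathrm{\Pi^0_2}$ condition that is not semi-decidable from $(p_k)$ even given finitely many additional bits of commitment. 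What is needed --- and what the paper does --- is to first recast $\LPO'$ as the question whether a directly given sequence contains infinitely many zeros, and then to feed the first $\T\C_\IN$ a set with final-segment structure: each time a new zero is found within a prefix of length $n$, the enumerated complement grows to $\{0,\dots,n\}$, so the resulting set is either empty (infinitely many zeros) or equal to $\{n+1,n+2,\dots\}$ with $n$ beyond the last zero. Then the returned number $i$ bounds the position of the last zero whenever there are only finitely many, while in the degenerate case (empty set, arbitrary $i$) there are zeros after $i$ anyway; hence a single $\LPO$-type query about the tail $p(i+1),p(i+2),\dots$, answerable by the second call since $\LPO\leqW\C_\IN\leqW\T\C_\IN$, returns the correct answer in all cases. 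Your sketch contains neither this bounding structure nor an argument covering the degenerate totalization case, so the first reduction is not established.
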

\begin{proof}
The second reduction is a consequence of \cite[Corollary~8.9]{BG21a} and Corollary~\ref{cor:closure-composition}.
The first reduction can be seen as follows.
We can see $\LPO'$ as the task to decide whether the input $p\in\IN^\IN$ contains infinitely many zeros~\cite[Lemma~4.23]{BG21a}.
If we find a zero in a prefix of length $n$ of $p$, then we produce the output $A_n=\{n+1,n+2,...\}$.
If we find the next zero in a prefix of length $m>n$, then we modify the output to $A_m$.
Hence the final output $A$ is either empty (which means that the input contains infinitely many zeros)
or $A=A_n$ for some $n$ and hence $i\in A$ implies $i\geq n+1$, which means that no further
zeros appear in $p$ after position $n$. 
In any case, the output of $\T\C_\IN(A)$ is some number $i$.
Since $\LPO\leqW\C_\IN\leqW\T\C_\IN$,
we can use the second copy of $\T\C_\IN$ to decide whether the sequence $p(i+1),p(i+2),p(i+3),...$
contains a zero or not. If it contains a zero, then $p$ contains infinitely many zeros, if not, then
$p$ contains only finitely many zeros. This proves $\LPO'\leqW\T\C_\IN\star\T\C_\IN$.
\qed
\end{proof}

As $(\LPO')^\diamond\equivW\C_\IN'$ by Theorem~\ref{thm:first-order}, we obtain
the following corollary.

\begin{corollary} $\T\C_\IN^\diamond\equivW\C_\IN'$.
\end{corollary}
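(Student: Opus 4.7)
The plan is to obtain the equivalence by sandwiching $\T\C_\IN^\diamond$ between $\C_\IN'$ and itself, using the lemma just proved together with the Westrick characterization of the diamond operator. I would split the proof into two reductions.

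For the upper bound $\T\C_\IN^\diamond \leqW \C_\IN'$, Lemma~\ref{lem:LPO-TCN-CN} gives in particular $\T\C_\IN \leqW \T\C_\IN \star \T\C_\IN \leqW \C_\IN'$, while Corollary~\ref{cor:closure-composition} tells us that $\C_\IN' \star \C_\IN' \equivW \C_\IN'$. Since $\id \leqW \T\C_\IN$, the characterization $f^\diamond\equivW\min_{\leqW}\{g:f\leqW g\star g\leqW g\}$ from Westrick's theorem, recalled in the excerpt, applies with $f=\T\C_\IN$, and $\C_\IN'$ is a candidate for $g$. Hence $\T\C_\IN^\diamond \leqW \C_\IN'$.

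For the lower bound $\C_\IN' \leqW \T\C_\IN^\diamond$, I would first use Theorem~\ref{thm:first-order} (the case $n=1$) which gives $\C_\IN'\equivW(\LPO')^\diamond$. From Lemma~\ref{lem:LPO-TCN-CN} we have $\LPO' \leqW \T\C_\IN\star\T\C_\IN$, and by the very definition of $\T\C_\IN^\diamond$ (or again Westrick's characterization) we have $\T\C_\IN\star\T\C_\IN \leqW \T\C_\IN^\diamond$, so that $\LPO'\leqW \T\C_\IN^\diamond$. Applying $\diamond$, which is monotone and idempotent, yields $(\LPO')^\diamond\leqW (\T\C_\IN^\diamond)^\diamond \equivW \T\C_\IN^\diamond$, whence $\C_\IN'\leqW\T\C_\IN^\diamond$.

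Combining these two reductions gives $\T\C_\IN^\diamond\equivW\C_\IN'$. There is no serious obstacle here beyond the bookkeeping of the diamond calculus: once Lemma~\ref{lem:LPO-TCN-CN} is in hand, both inequalities reduce to an application of Westrick's minimality property and the closure of $\C_\IN'$ under compositional product. The only point one has to be slightly careful about is that the diamond characterization requires $\id\leqW f$ to be meaningful, which is trivially true for $f\in\{\T\C_\IN,\LPO',\C_\IN'\}$.
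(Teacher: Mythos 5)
Your proof is correct and follows essentially the same route as the paper: the corollary is obtained by combining Lemma~\ref{lem:LPO-TCN-CN} with $(\LPO')^\diamond\equivW\C_\IN'$ from Theorem~\ref{thm:first-order}, the closure $\C_\IN'\star\C_\IN'\equivW\C_\IN'$, and the minimality property of the diamond operator. You merely spell out the bookkeeping that the paper leaves implicit, so there is nothing to add.
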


All this confirms that $\C_\IN^{(n)}$ and likewise $\K_\IN^{(n)}$ are legitimate correspondents of $\I\mathrm{\Sigma}^0_{n+1}$ and $\B\mathrm{\Sigma}^0_{n+1}$, respectively. 
Hence, our classification of the G\"odel problem and its variants has been performed with respect to an appropriate benchmark scale.

\bibliographystyle{splncs03}
\bibliography{C:/Users/\user/Documents/Spaces/Research/Bibliography/lit}

\end{document}